\newcommand*{\math@version@bold}{bold}
\DeclareMathOperator\DD{
	\textrm{%
		\usefont{T2A}{cmr}{\ifx\math@version\math@version@bold bx\else m\fi}{n}%
		\CYRD
	}%
} 
\newtheorem{theorem}{Theorem}[section]
\newtheorem*{theorem*}{Theorem}
\newtheorem{lemma}[theorem]{Lemma}
\newtheorem*{csconj}{Cosmetic Surgery Conjecture}
\newtheorem*{ccconj}{Cosmetic Crossing Conjecture}
\newtheorem{conjecture}[theorem]{Conjecture}
\theoremstyle{definition}
\newtheorem{definition}[theorem]{Definition}
\newtheorem{example}[theorem]{Example}
\newtheorem{remark}[theorem]{Remark}
\newcommand{\myitem}[1]{%
	\item[#1]\protected@edef\@currentlabel{#1}%
}
\newcommand{\QGrad}[1]{{\textcolor{violet}{#1}}}
\newcommand{\HomGrad}[1]{{\textcolor{black}{#1}}}
\newcommand{\GGzqh}[4]{\prescript{\QGrad{#3}}{}{#1}_\HomGrad{#4}}
\newcommand{\GGzqz}[4]{\prescript{\QGrad{#3}}{}{#1}}
\DeclareMathOperator{\HFT}{HFT}
\renewcommand{\a}{\mathbf{a}}
\newcommand{\eight}{\mathbf{e}}
\renewcommand{\c}{c}
\newcommand{\tc}{\tilde{\c}}
\newcommand{\s}{\mathbf{s}}
\newcommand{\ts}{\tilde{\s}}
\renewcommand{\r}{\mathbf{r}}
\newcommand{\tr}{\tilde{\r}}
\newcommand{\rKh}{\mathbf{r}}
\newcommand{\sKh}{\mathbf{s}}
\newcommand{\KhTl}[1]{[\![ #1 ]\!]_{/l}} 
\DeclareMathOperator{\Kh}{Kh}
\DeclareMathOperator{\AKh}{AKh}
\DeclareMathOperator{\BN}{BN}
\newcommand{\Khr}{\widetilde{\Kh}}
\newcommand{\BNr}{\widetilde{\BN}}
\newcommand{\HF}{\operatorname{HF}}
\DeclareMathOperator{\Homology}{\mathbf{H}_\ast}
\newcommand{\mirror}{\operatorname{m}} 
\newcommand{\PuncturedPlane}{\mathbb{R}^2\smallsetminus \mathbb{Z}^2}
\newcommand{\QPI}{\operatorname{\mathbb{Q}P}^1}
\newcommand{\FourPuncturedSphereKh}{S^2_{4,\ast}}
\newcommand{\pt}{{\text{pt}}}
\DeclareMathOperator{\BNAlgH}{\mathcal{B}}
\DeclareMathOperator{\id}{id}
\DeclareMathOperator{\PSL}{PSL}
\def\F{\mathbb{F}}
\def\R{\mathbb{R}}
\def\Z{\mathbb{Z}}
\renewcommand{\C}{\mathbb{C}}
\newcommand{\fieldTwoElements}{\mathbb{F}}
\DeclareMathOperator{\Mor}{Mor}
\DeclareMathOperator{\Mod}{Mod}
\DeclareMathOperator{\Cob}{Cob}
\newcommand{\hateqq}{\mathrel{\widehat{=}}}
\def\co{\colon\thinspace\relax}
\newcommand{\vc}[1]{\vcenter{\hbox{#1}}}%
\newcommand{\mypic}[3]{%
	\newcommand{#3}{%
		\vc{%
			\includegraphics[page=#2]%
			{PSTricks/PSTricks-figures-#1-pics.pdf}%
		}%
	}%
}%
\begin{document}
\title{Cosmetic operations and Khovanov multicurves}

\author{Artem Kotelskiy}
\address{Department of Mathematics \\ Stony Brook University}
\email{artem.kotelskiy@stonybrook.edu}

\author{Tye Lidman}
\address{Department of Mathematics \\ North Carolina State University}
\email{tlid@math.ncsu.edu}

\author{Allison H. Moore}
\address{Department of Mathematics \& Applied Mathematics \\ Virginia Commonwealth University}
\email{moorea14@vcu.edu}

\author{Liam Watson}
\address{Department of Mathematics \\ University of British Columbia}
\email{liam@math.ubc.ca}

\author{Claudius Zibrowius}
\address{Department of Mathematics \\ University of Regensburg}
\email{claudius.zibrowius@posteo.net}

\thanks{TL was partially supported by NSF grant DMS-1709702 and a Sloan fellowship. 
AK is supported by an AMS-Simons travel grant. 
AHM is supported by The Thomas F. and Kate Miller Jeffress Memorial Trust, Bank of America, Trustee. 
LW is supported by an NSERC discovery/accelerator grant.
CZ is supported by the Emmy Noether Programme of the DFG, Project number 412851057.}

\begin{abstract}
	We prove an equivariant version of the Cosmetic Surgery Conjecture for strongly invertible knots. 
	Our proof combines a recent result of Hanselman with the Khovanov multicurve invariants \(\Khr\) and \(\BNr\). 
	We apply the same techniques to reprove a result of Wang about the Cosmetic Crossing Conjecture and split links. Along the way, we show that \(\Khr\) and \(\BNr\) detect if a Conway tangle is split.
\end{abstract}
\maketitle
\setcounter{tocdepth}{1}


\section{Introduction}\label{sec:intro}

Here are two classical open conjectures in low dimensional topology:

\begin{csconj}
	Given a non-trivial knot \(K\subset S^3\) and \(r,r'\in\QPI\), suppose there exists an orientation-preserving diffeomorphism \(S^3_r(K)\cong S^3_{r'}(K)\). Then \(r=r'\). 
\end{csconj}

\begin{ccconj}
	Any crossing change that preserves the isotopy class of a knot must occur at a nugatory crossing, meaning the crossing circle (see Figure~\ref{fig:crossingcircle}) bounds an embedded disk in the complement of the knot.
\end{ccconj}

The Cosmetic Surgery Conjecture originates with Gordon \cite[Conjecture 6.1]{Gordon} and the Cosmetic Crossing Conjecture is due to Lin. 
Both problems appear in Kirby's Problem List \cite[Problem 1.81 A, Bleiler; Problem 1.58, Lin]{Kirby}.
This paper explores what Khovanov homology can say about these conjectures from the perspective of the multicurve technology developed by Kotelskiy, Watson, and Zibrowius \cite{KWZ}.

\begin{figure}[t]
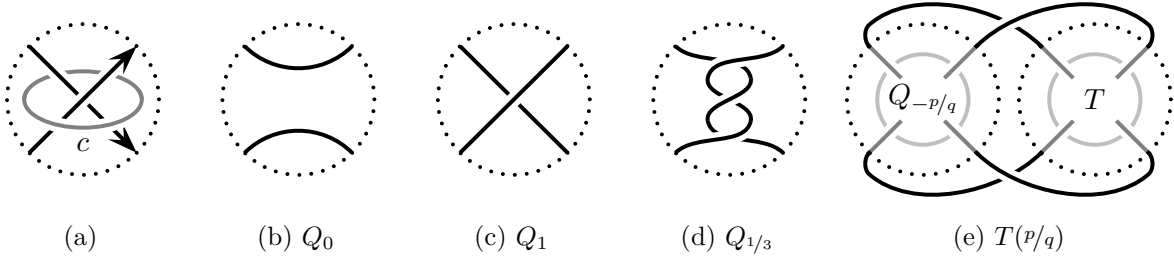

	\centering
	\begin{subfigure}{0.17\textwidth}
		\centering
		\(\crossingcircle\)
		\caption{}
		\label{fig:crossingcircle}
	\end{subfigure}
	\begin{subfigure}{0.17\textwidth}
		\centering
		\(\ratoxi\)
		\caption{\(Q_{0}\)}
		\label{fig:rat:oxi}
	\end{subfigure}
	\begin{subfigure}{0.17\textwidth}
		\centering
		\(\ratixi\)
		\caption{\(Q_{1}\)}
		\label{fig:rat:ixi}
	\end{subfigure}
	\begin{subfigure}{0.17\textwidth}
		\centering
		\(\ratixiii\)
		\caption{\(Q_{\nicefrac{1}{3}}\)}
		\label{fig:rat:ixiii}
	\end{subfigure}
	\begin{subfigure}{0.28\textwidth}
		\centering
		\(\tanglepairing\)
		\caption{\(T(\nicefrac{p}{q})\)}
		\label{fig:rat:pairing}
	\end{subfigure}
	\caption{A crossing circle \(c\) (a), some examples of rational tangles~(b--d), and the \(\nicefrac{p}{q}\)-rational filling \(T(\nicefrac{p}{q})\) of a Conway tangle \(T\)~(e)}
	\label{fig:rat}
\end{figure}

\subsection{Cosmetic surgeries}

Following Sakuma \cite{Sakuma}, a strongly invertible knot is a pair \((K,h)\), where \(K\) is a knot in \(S^3\) and \(h\) is an orientation-preserving involution of \(S^3\) mapping \(K\) to itself and reversing a choice of orientation of \(K\). 
Strongly invertible knots $(K,h)$ and $(K',h')$ are equivalent if there exists an orientation-preserving homeomorphism $f$ on $S^3$ for which $f(K)=K'$ (so that $K$ and $K'$ are equivalent knots) and $h = f^{-1}\circ h'\circ f$. 
Any strong inversion \(h\) on a knot \(K\subset S^3\) restricts to the hyperelliptic involution of the torus on the boundary of the knot exterior, and hence can be extended to an involution \(h_r\) on  \(S^3_r(K)\) for any slope \(r\in\QPI\).
(The latter statement can be found in \cite{Holzmann}. See also \cite{Auckly} for a nice exposition.) 
This extension is unique up to homotopy.

In this article we prove the following equivariant version of the Cosmetic Surgery Conjecture:

\begin{theorem}\label{thm:intro:ecsc}
	Given a non-trivial strongly invertible knot \((K,h)\) and \(r,r'\in\QPI\), suppose that there exists an orientation-preserving diffeomorphism \(f\co S^3_r(K)\rightarrow S^3_{r'}(K)\) such that $h_{r'}\circ f= f \circ h_r$. Then \(r=r'\).
\end{theorem}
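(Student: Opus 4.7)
The approach is to transfer the equivariant cosmetic problem on $(K,h)$ to a statement about rational fillings of a Conway tangle and then combine Hanselman's sharp restrictions on cosmetic surgery slopes with the Khovanov multicurve machinery $\Khr$ and $\BNr$. Since $h$ restricts to the hyperelliptic involution on the peripheral torus of the knot exterior $X_K$, the quotient $X_K/h$ is a three-ball in which the image of $K$ together with the branch axis assembles into a Conway tangle $T = T_{(K,h)}$. For each slope $r$ the quotient of $(S^3_r(K), h_r)$ is an unknotted pair $(S^3, L_r)$ with $L_r \cong T(\varphi(r))$ the rational filling of $T$ at an image slope $\varphi(r)$ given by an explicit linear-fractional change of coordinates. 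An orientation-preserving equivariant diffeomorphism $f$ descends to an orientation-preserving diffeomorphism of pairs $(S^3, L_r) \to (S^3, L_{r'})$, so Theorem~\ref{thm:intro:ecsc} is equivalent to the statement that these branch-set links are not isotopic in $S^3$ unless $r=r'$.

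Hanselman's theorem on cosmetic surgeries constrains the slopes severely for a non-trivial knot in $S^3$: if $S^3_r(K)\cong S^3_{r'}(K)$ orientation-preservingly with $r\neq r'$, then either $\{r,r'\}=\{\pm 2\}$, or $\{r,r'\}=\{\pm \nicefrac{1}{q}\}$ for an integer $q$ bounded explicitly in terms of the Seifert genus and Heegaard Floer thickness of $K$. Since our hypothesis yields such a diffeomorphism of the underlying manifolds, the slopes appearing in Theorem~\ref{thm:intro:ecsc} must come from this short list, and it remains to rule out each of the finitely many candidate slope pairs in the equivariant setting.

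For each surviving candidate pair we invoke the multicurve pairing formula
\[
\Kh(T(s)) \cong \Homology\bigl(\Khr(T), Q_s\bigr), \qquad \BN(T(s)) \cong \Homology\bigl(\BNr(T), Q_s\bigr),
\]
where $Q_s \subset \FourPuncturedSphere$ is the curve of the rational tangle of slope $s$. The assumed link equivalence $L_{\nicefrac{1}{q}}\simeq L_{-\nicefrac{1}{q}}$ (and similarly for $\{\pm 2\}$) forces these two pairings to agree as bigraded vector spaces up to a global shift determined by writhes and linking numbers. The curves $Q_{\nicefrac{1}{q}}$ and $Q_{-\nicefrac{1}{q}}$ are related by a reflection of $\FourPuncturedSphere$ but are not isotopic in $\FourPuncturedSphere$, so this identity imposes a rigid symmetry on each component of $\Khr(T)$ and $\BNr(T)$ together with constraints on their bigradings. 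Combining this symmetry with the classification of allowed multicurve components from \cite{KWZ} should leave no configuration compatible with $K$ being non-trivial, yielding the desired contradiction.

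The hardest step is this last one: translating the combinatorial identity of two Khovanov pairings, indexed by one preselected pair of rational slopes, into genuine geometric rigidity on the multicurves. Unlike in settings where one has infinitely many slopes or an asymptotic thickness bound available, the constraint is extracted from a single pair of slopes dictated by Hanselman's theorem, and its consequences must be unwound component by component, carefully tracking the bigrading data and the action of the reflection on $\FourPuncturedSphere$. All other ingredients---passage to the quotient, invocation of Hanselman, and the final reduction to the non-triviality of $K$---are essentially formal once this multicurve rigidity has been established.
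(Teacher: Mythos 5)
Your overall framework matches the paper's: pass to the quotient Conway tangle $T$ of $(K,h)$, observe that an equivariant cosmetic surgery gives isotopic rational fillings $T(r)\cong T(r')$, invoke Hanselman to restrict to $\{r,r'\}=\{\pm2\}$ or $\{\pm\nicefrac{1}{n}\}$, and then try to rule out these pairs with the multicurve pairing theorem. However, the decisive step is missing: you state that the equality of the two pairings ``imposes a rigid symmetry'' on the components of $\Khr(T)$ and that this ``should leave no configuration compatible with $K$ being non-trivial,'' but no argument is given, and it is not clear the symmetry heuristic can work as stated --- an isomorphism of the two Floer pairings constrains only the total bigraded homology, and a priori different components of $\Khr(T)$ could conspire to produce it, so no symmetry of the multicurve itself follows. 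What actually closes the argument in the paper is a structural analysis made possible by the unknot-closure hypothesis: since $T(\infty)$ is the unknot, $\Khr(T)$ meets the vertical arc in a single point, which by the geography/classification of components forces $\Khr(T)=\gamma_1\cup\dots\cup\gamma_m\cup\rho$ with each $\gamma_i$ a special component of slope $\infty$, $\rho=\rKh_1(s)$ with $s\in\Z$, $m>0$ because $T$ is not rational (Theorem~\ref{thm:rational_tangle_detection}), and $s$ even by the connectivity detection (Theorem~\ref{thm:detection:connectivity}). One then pins down $s=0$ by equating total dimensions via the dimension formula (Lemma~\ref{lem:pairing_linear_curves:dimension_formula}), and derives the contradiction from absolute quantum gradings: after checking that the linking number of $T$ vanishes (so $\Khr(T)$ is orientation independent), the pairings of the $\pm$ twist-arcs with $\rho$ agree in grading, while the pairings with each special component $\gamma_i$ differ by an explicit nonzero quantum shift (e.g. $q^{+2}$ in the $\pm2$ case, and strictly negative shifts in the $\nicefrac{\pm1}{n}$ case), contradicting $\Khr(T(r))\cong\Khr(T(r'))$.

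None of this structural input --- the reduction of $\Khr(T)$ to slope-$\infty$ specials plus a single length-one rational of even integer slope, the dimension count forcing $s=0$, the orientation-independence via linking number zero, and the explicit grading-shift computations for the twist-arc invariants --- appears in your proposal, and you yourself flag the last step as the hardest and leave it unargued. As it stands the proposal is a correct plan for the first (essentially formal) half of the proof, but the mathematical content of the theorem is exactly the part that remains to be supplied.
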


A strongly invertible knot $(K,h)$ gives rise to a Conway (ie four-ended) tangle $T \subset B^3$ constructed as follows. According to the Smith conjecture, the fixed point set $\operatorname{Fix}(h)$ in $S^3$ is an unknot intersecting $K$ in two points, and thus restricts to a pair of arcs in the knot exterior $X_K=S^3\smallsetminus\nu(K)$. Therefore, taking the quotient produces a Conway tangle $T=\operatorname{Im}(\operatorname{Fix}(h)\cap X_K)$ inside the three ball $B^3=X_K / h$. For an illustration, see \cite[Figure~4]{Watson2012}.

Cosmetic surgeries along a strongly invertible knot  $(K,h)$ are closely related to cosmetic tangle fillings of the quotient tangle $T$, due to the Montesinos trick \cite{Montesinos}:
The fixed point set of the involution $h_{\nicefrac{p}{q}}$ on $S^3_{\nicefrac{p}{q}}(K)$ restricted to the surgery solid torus is a pair of arcs, which descends in the quotient of the solid torus to a trivial Conway tangle. Montesinos gives an explicit correspondence between the rational surgery slope and a rational parameterization of the trivial tangle in the quotient. In particular, $S^3_{\nicefrac{p}{q}}(K)$ is the two-fold branched cover of the rational tangle filling $T(\nicefrac{p}{q}) = Q_{\nicefrac{-p}{q}}\cup T$  illustrated in Figure~\ref{fig:rat}, where $Q_{\nicefrac{-p}{q}}$ denotes the rational tangle of slope $\nicefrac{-p}{q}$ and the tangle $T$ is the quotient tangle of $(K,h)$. 
Conversely, a cosmetic crossing change on a knot induces a cosmetic surgery on the two-fold branched cover.  Hence, the two cosmetic conjectures are related, and the analogue of Theorem~\ref{thm:intro:ecsc} is:

\begin{theorem}\label{thm:intro:cosmeticrational}
	Let \(T\) be a Conway tangle in the three-ball with an unknot closure.  Suppose \(T(r)\) and \(T(r')\) are isotopic links in the three-sphere for some \(r, r' \in \QPI\).  Then \(r = r'\) or \(T\) is rational.   
\end{theorem}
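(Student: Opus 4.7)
The strategy uses the Khovanov multicurve invariant \(\Khr(T)\) (and equivalently \(\BNr(T)\)) of \cite{KWZ}, which represents the tangle by an immersed multicurve with local systems in the four-punctured sphere \(\FourPuncturedSphere\). Each component of \(\Khr(T)\) is either a \emph{rational} curve of some slope \(s \in \QPI\) or a \emph{special} curve from a known finite list of shapes, and \(T\) is rational precisely when \(\Khr(T)\) consists of a single rational component carrying a trivial local system.

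I will invoke the pairing formula: for any slope \(r \in \QPI\), the bigraded group \(\Kh(T(r))\) is computed by a Lagrangian-type intersection of \(\Khr(T)\) with the rational curve representing the filling tangle \(Q_{-r}\). Since \(T(r)\) and \(T(r')\) are isotopic links, \(\Kh(T(r)) \cong \Kh(T(r'))\) as bigraded modules, yielding an equality between the two pairings. The unknot closure hypothesis plays a grading-normalization role: comparing with the Khovanov homology of the unknot at the appropriate filling slope fixes an absolute bigrading on \(\Khr(T)\) and constrains which shapes of components can appear.

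The core technical step is to analyze how individual components contribute to the pairing. A rational component of slope \(s\) contributes a bigraded piece whose total rank grows linearly with the Farey distance between \(s\) and the filling slope, with characteristic grading shifts that detect on which side of \(s\) the filling slope lies. Special components contribute fixed-rank pieces with a different slope-dependent grading pattern. Equating the pairings at \(r\) and \(r'\) and carrying out a Farey-graph case analysis, controlled by the absolute bigrading from the unknot closure, forces \(\Khr(T)\) to be a single rational component with trivial local system. By the rationality detection result in \cite{KWZ}, \(T\) is then rational.

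The hardest step is this final case analysis. Multiple rational components at nearby slopes, or mixtures of rational and special components with nontrivial local systems, can a priori mimic the same pairing data at two distinct slopes; ruling this out requires both the grading normalization supplied by the unknot closure hypothesis and careful bookkeeping of how Farey arithmetic governs the intersection multiplicities. I expect this is where the bulk of the work lies, and where the interplay between the multicurve geography and the bigraded structure of \(\Kh\) becomes indispensable.
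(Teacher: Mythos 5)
There is a genuine gap: your plan omits the input that the paper's argument actually hinges on, namely the passage to the two-fold branched cover and Hanselman's restriction on cosmetic surgery slopes. The unknot-closure hypothesis is not primarily a grading normalization; it guarantees that \(\Sigma(T)\) is a knot exterior in \(S^3\), so that \(T(r)\cong T(r')\) gives \(S^3_r(K)\cong S^3_{r'}(K)\) and Hanselman's theorem forces \(r=-r'\) with \(r=\pm2\) or \(r=\nicefrac{\pm1}{n}\). Only after this reduction does the Khovanov-side analysis become tractable: the paper then needs to rule out exactly two families of slope pairs, by a dimension count (Lemma~\ref{lem:pairing_linear_curves:dimension_formula}) that pins the slope of the rational component to \(0\), followed by an explicit comparison of quantum-grading shifts on the special components. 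Your proposal instead promises a ``Farey-graph case analysis'' over all pairs \(r\neq r'\) and all multicurve configurations, but this is precisely the part you leave unexecuted, and it is not clear it can be carried out: with \(\Khr(T)\) consisting of special components of slope \(\infty\) and a rational component of slope \(s\), the total ranks of \(\Khr(T(r))\) and \(\Khr(T(r'))\) agree for every reflected pair \(r+r'=2s\), so rank considerations admit infinitely many candidate cosmetic pairs and a complete graded analysis for all of them would be required — something neither you nor the paper does without the Heegaard Floer input.

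A second, smaller omission: the unknot-closure hypothesis also does concrete geometric work on the Khovanov side that your sketch only gestures at. Because \(T(\infty)\) is the unknot, \(\Khr(T)\) meets the vertical arc \(\a_\infty=\BNr(Q_\infty)\) in a single point, which forces every special component to have slope \(\infty\) and leaves exactly one rational component, of length one and integer slope; connectivity detection (Theorem~\ref{thm:detection:connectivity}) further makes that slope even, and non-rationality of \(T\) guarantees at least one special component via Theorem~\ref{thm:rational_tangle_detection}. This geography statement, not an absolute grading normalization, is what makes the subsequent pairing computations finite and explicit. As written, your argument is a plausible outline of the correct general strategy (pairing plus geography plus gradings), but the reduction of slopes via Hanselman and the concrete structure of \(\Khr(T)\) are the missing ideas without which the central case analysis does not close.
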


Indeed Theorem~\ref{thm:intro:cosmeticrational} is equivalent to Theorem~\ref{thm:intro:ecsc} by taking two-fold branched covers.  The condition of having an unknot closure is equivalent to considering surgeries on knots in $S^3$ and non-triviality of the knot in $S^3$ is equivalent to the tangle not being rational.  

Gordon and Luecke's solution to the knot complement problem implies that no non-trivial knots admit cosmetic surgeries when one of the slopes is $\infty$ \cite{GordonLuecke}. Boyer and Lines showed that if $\Delta''_K(t)\neq0$, rational surgeries along $K$ are always distinct, where $\Delta_K(t)$ is the Alexander polynomial \cite[Proposition 5.1]{BoyerLines}, and a similar result may be formulated in terms of the Jones polynomial \cite{IchiharaWu}. Note that the obstructions of \cite{BoyerLines, IchiharaWu} do not disqualify strongly invertible knots from admitting cosmetic surgeries because all Alexander polynomials of knots can be realized by strongly invertible knots \cite{Sakai}. 

Strong restrictions on cosmetic surgeries can be formulated in terms of Heegaard Floer homology, as shown in work of Wang \cite{WangJiajun}, Ozsv\'ath and Szab\'o \cite{OzSzRational}, and Ni and Wu \cite{NiWu}.  
Using immersed curves, Hanselman's work \cite{HanselmanCSC} extends these results; in particular, he shows that cosmetic surgery slopes have to be either \(\pm2\) or \(\nicefrac{\pm1}{n}\). 
In an entirely different direction, hyperbolic geometry techniques have been used to bound the lengths of cosmetic filling slopes \cite{FuterPurcellSchleimer}. 
Using the strategies above and others, the conjecture has been established for knots of genus one, cables, connected sums and three-braids \cite{WangJiajun, Tao1, Tao2, Varvarezos}. 

Absent from this summary are developments utilizing Khovanov homology. 
Our main tool for approaching this problem is the multicurve technology for Khovanov homology developed by Kotelskiy, Watson, and Zibrowius \cite{KWZ,KWZthinness}, which we use to prove Theorem~\ref{thm:intro:cosmeticrational}.  
This theory assigns an immersed multicurve in the four-punctured sphere to a Conway tangle and a suitable Lagrangian Floer homology of these multicurves computes the Khovanov homology.

\subsection{Cosmetic Crossings}
As illustrated by Theorem~\ref{thm:intro:cosmeticrational}, the immersed curve invariants for Khovanov homology provide a powerful tool for studying the behavior of Khovanov homology under tangle fillings.  Note that the Cosmetic Crossing Conjecture can be viewed in terms of comparing the $+1$ and $-1$ tangle fillings of a Conway tangle; hence, the Khovanov tangle invariants provide a natural tool for studying the Cosmetic Crossing Conjecture.  In fact,  Theorem~\ref{thm:intro:cosmeticrational} immediately implies the Cosmetic Crossing Conjecture for the unknot, originally due to Scharlemann-Thompson \cite{ST}.  This conjecture is still open in general, but has been established for knots which are two-bridge \cite{Torisu} or fibered \cite{Kalfagianni, Rogers}, as well as for large classes of knots which are genus one \cite{BFKP, Ito} or alternating \cite{LidmanMoore}.  In the second half of this paper, we illustrate the utility of this theory by giving elementary proofs of two other known results about the Cosmetic Crossing Conjecture.

First, we recall that there is a generalization of the Cosmetic Crossing Conjecture.  Let $c$ be a crossing circle for a knot $K$ as in Figure~\ref{fig:crossingcircle}.  
Then, performing $\nicefrac{-1}{n}$-Dehn surgery on $c$ produces a new knot $K_n$ which corresponds to adding $n$ full right-handed twists at the crossing.  The Generalized Cosmetic Crossing Conjecture predicts that if $c$ is a non-nugatory crossing, then $K_n$ is not isotopic to $K_m$ for $n \neq m$.  

We first give a new proof of a recent result of Wang on the Generalized Cosmetic Crossing Conjecture, which also used Khovanov homology:
\begin{theorem}[Wang \cite{Wang}] \label{thm:josh}
Let $K$ be a knot obtained by a non-trivial band surgery on a split link $L$.  If $K_n$ is obtained by inserting $n \in \Z$ twists into the band, then $K_n$ is not isotopic to $K_m$ for any $n \neq m$.
\end{theorem}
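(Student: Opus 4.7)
The plan is to associate to the band a Conway tangle $T$, use the splitness of $L$ to deduce that $T$ itself is a split tangle, and then apply the splitness detection result established earlier in the paper together with the pairing theorem of \cite{KWZ} to distinguish the $K_n$ via their Khovanov homology. Let $B \subset S^3$ be a closed $3$-ball containing the band together with short arcs of $L$ at its two feet, and let $T$ denote the Conway tangle $L \cap \overline{S^3 \smallsetminus B}$. With an appropriate identification of $\partial B$ with the four-punctured sphere, the split link $L$ arises as a rational filling $L = T(s_L)$ for some fixed slope $s_L$, the knot $K$ is the filling $K = T(r_0)$, and the $n$-twisted variants are $K_n = T(r_n)$, where the slopes $r_n$ vary linearly and monotonically with $n$ and are pairwise distinct.

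The first step is to show that $T$ is a split Conway tangle. A splitting sphere $S$ for $L$ must intersect the band, since the band surgery merges two split summands into one. After the usual innermost-disk simplifications, $S$ may be taken to meet $B$ in a single meridional disk of the band; then $S \cap \overline{S^3 \smallsetminus B}$ is a properly embedded disk in the tangle ball whose boundary separates the four endpoints of $T$ into two pairs, each pair lying on one of the two split summands of $L$. Thus $T$ decomposes as a disjoint union of two two-ended tangles along this disk, so $T$ is split; the non-triviality hypothesis on the band surgery prevents $T$ from being purely rational. Applying the splitness detection theorem proved earlier in the paper, $\Khr(T)$ must take the distinctive shape associated to a split-tangle multicurve, and in particular contains an essential closed-curve (``loop'') component whose slope realises the splitting slope $s_L$.

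To conclude, I would invoke the pairing theorem of \cite{KWZ}, which computes the bigraded Khovanov homology $\Kh(K_n)$ as the Lagrangian Floer intersection of $\Khr(T)$ with the rational curve of slope $r_n$. Since $\Khr(T)$ contains an essential loop of slope $s_L \ne r_n$, this intersection count includes a contribution proportional to the Farey distance between $r_n$ and $s_L$, and moreover the homological and quantum gradings of that contribution shift linearly with $r_n$; the remaining arc components of $\Khr(T)$ contribute a bounded amount, independent of $n$. Consequently the bigraded Khovanov homology of $K_n$ changes strictly with $n$, and hence $K_n \not\cong K_m$ whenever $n \ne m$.

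The principal obstacle is to extract from the splitness detection theorem enough information to pin down the precise homotopy class of the loop in $\Khr(T)$, and then to verify that the induced bigrading on the pairing is sensitive to the sign of $n$ — the total rank alone tells apart distinct values of $|n|$, but distinguishing $K_n$ from $K_{-n}$ requires careful bookkeeping of the grading shifts. Should $\Khr$ prove insufficient for this last step, the refined invariant $\BNr$, which the paper also shows to detect splitness, is available as a fallback.
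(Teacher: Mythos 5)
The decisive step in your proposal---that a splitting sphere $S$ for $L$ can be isotoped to meet the band ball $B$ in a single meridional disk of the band, so that $T$ is a split tangle---is false, and in fact it is precisely the negation of the hypothesis. A band for which some splitting sphere can be arranged to cross it in a single cocore arc is exactly a \emph{trivial} band; in general the band may wrap around and link the splitting sphere essentially, and innermost-disk arguments only remove intersection circles that are inessential in the four-punctured sphere, not the essential ones. Worse, if your claim did hold, $T$ would be horizontally split, and then (as noted at the start of Section~\ref{sec:agccc}) \emph{all} the knots $K_n$ coincide, so the argument would be proving the opposite of the statement. The numerics in your final step reflect this: for a horizontally split tangle, Theorem~\ref{thm:split_tangle_detection} says every component of $\Khr(T)$ is a copy of $\rKh_1(0)$, and the filling slopes here are $\nicefrac{1}{2n}$ with $\Delta(\nicefrac{1}{2n},0)=1$, so the pairing rank is \emph{constant} in $n$ rather than growing with a Farey distance; there is no ``splitting slope $s_L$'' distinct from the filling slopes to exploit. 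The non-triviality of the band is also not about $T$ being rational---it is the statement that $T$ is not horizontally split.

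The actual argument runs in the opposite logical direction. One never shows $T$ is split; instead, splitness of the \emph{closure} $T(0)=L$ is converted into a constraint on the multicurve: Theorem~\ref{thm:split_link_closure_detection} shows that if $T(0)$ is split then $\Khr(T)$ contains only components of slope $0$, and its proof is algebraic, using that the basepoint action of $\F[x]/(x^2)$ on $\Khr$ of a split link is free (Shumakovitch) and exhibiting a torsion class in the morphism space whenever a component of non-zero slope is present. Then Theorem~\ref{thm:split_tangle_detection} is used in the contrapositive: since the band is non-trivial, $T$ is \emph{not} horizontally split, so among the slope-$0$ components there must be a special one. Finally, Lemma~\ref{lem:only_zero_slopes_implies_GCCC} distinguishes all the $K_n$ by quantum gradings: pairing with the arcs $\a_{\nicefrac{1}{2n}}$, the rational slope-$0$ components contribute in gradings independent of $n$, while the special slope-$0$ components contribute with a shift of $4n$, so $\Khr(K_n)\not\cong\Khr(K_m)$ for $n\neq m$. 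If you want to salvage your outline, the piece to replace is the topological ``$L$ split $\Rightarrow T$ split'' step by an argument of this kind constraining $\Khr(T)$ from the splitness of $T(0)$.
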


We also prove that the generalized crossing conjecture holds ``asymptotically'':
\begin{theorem}\label{thm:agccc}
Let $K$ be an unoriented knot and $c$ a crossing circle for a non-nugatory crossing. Let $\{K_n\}_{n \in \Z}$ be the associated sequence of knots obtained by inserting twists at $c$. Then there exists an integer $N$ such that $\{K_n\}_{|n|\geq N}$ are pairwise different.
\end{theorem}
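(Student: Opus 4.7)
The plan is to encode the twist family as rational fillings of a Conway tangle and distinguish the resulting knots through the multicurve pairing formula of Kotelskiy--Watson--Zibrowius. Let $B$ be a three-ball whose boundary is the union of the crossing disk at $c$ with a nearby isotopic disk, so that $T = K \cap (S^3 \setminus \mathring{B})$ is a Conway tangle, and inserting $n$ twists amounts to gluing a rational tangle of slope $r_n = r_0 + n$ into $B$. Hence $K_n = T(r_n)$, and $r_n \to \infty$ in $\QPI$ as $|n| \to \infty$. By the pairing theorem of \cite{KWZ}, the reduced Bar-Natan homology $\BNr(K_n)$ is computed as a Lagrangian Floer homology $\Homology(\BNr(T), \BNr(Q_{r_n}))$ in the four-punctured sphere.

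To use the non-nugatory hypothesis I would invoke the result, established earlier in this paper, that $\BNr$ detects when a Conway tangle is split. Non-nugatority of $c$ is equivalent to $T$ being a non-split Conway tangle, so $\BNr(T)$ cannot be the multicurve of a split tangle, and in particular must contain a component---either a rational component of slope different from $\infty$, or a special non-rational component---whose algebraic intersection number with a rational curve of slope $r_n$ grows without bound as $|n| \to \infty$. A direct intersection-theoretic count, tracking the figures \AGCCCzeroRationals{} and \AGCCCzeroSpecials{} in spirit, then yields $\dim \BNr(K_n) = C |n| + O(1)$ for some constant $C > 0$, so the total ranks grow unboundedly.

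Unbounded growth alone does not force pairwise distinctness, since the rank need not distinguish $n$ from $-n$. To promote rank growth to injectivity I would track the bigraded Poincar\'e polynomial of $\BNr(K_n)$: each intersection point in the pairing carries homological and quantum gradings, and changing $n$ to $n+1$ acts on these gradings by the Dehn twist along the filling curve, so for $|n|$ sufficiently large the induced translation is injective on the intersections contributed by the non-$\infty$-slope components of $\BNr(T)$, and the bigraded polynomials of distinct $K_n$ differ. The principal obstacle lies precisely in this graded refinement: one must verify that the grading shifts detect the sign of $n$ as well as its magnitude. When $\BNr(T)$ has a special component this asymmetry is intrinsic, while in the complementary case where $\BNr(T)$ is purely rational, $T$ is itself a rational tangle and each $K_n$ is a two-bridge knot, so the family is already distinguished for $|n|$ large by the determinant $\det(K_n)$, which is a nontrivial affine function of $n$.
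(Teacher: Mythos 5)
Your framework---realizing the twist family as rational fillings $K_n=T(r_n)$ of a fixed Conway tangle and pairing the tangle multicurve with the filling arcs---is exactly the paper's setup, but your central quantitative claim has a genuine gap. Non-nugatority of $c$ only says that $T$ is not split along the crossing-disk slope, and by the splitness detection theorem (Theorem~\ref{thm:split_tangle_detection}) this leaves open the case in which \emph{every} component of $\Khr(T)$ has the crossing-disk slope (slope $\infty$ in your convention, slope $0$ in the paper's) but at least one of them is special. In that case your assertion that some component must have unbounded intersection with the filling curves is false: a special component $\sKh_{2k}$ of the limiting slope meets the arc $\BNr(Q_{r_n})$ in exactly $2k\cdot\Delta(r_n,\infty)=2k$ points, so every summand of the pairing has rank independent of $n$ and the estimate $\dim = C|n|+O(1)$ with $C>0$ fails. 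This is precisely the case the paper must treat separately (Lemma~\ref{lem:only_zero_slopes_implies_GCCC}), where distinctness comes not from ranks but from absolute quantum gradings: the special summands are shifted by $q^{4n}$ as $n$ varies while the rational slope-$0$ summands are not. Your proposal only gestures at gradings (``the asymmetry is intrinsic'') and never supplies this computation, so the essential case is unproved.

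Two further problems. First, your fallback when the multicurve is ``purely rational'' asserts that $T$ is then a rational tangle; rational tangle detection (Theorem~\ref{thm:rational_tangle_detection}) requires a \emph{single} component $\rKh_1(\nicefrac{p}{q})$, and a non-rational tangle can have several rational components of various slopes and lengths, so your case division is not exhaustive. (In the paper this situation falls under Lemma~\ref{lem:nonzero_slope_implies_limit_iinfinitty}, where ranks do grow linearly and the remaining issue---telling $n\gg0$ from $n\ll0$---is again settled by showing the quantum grading is bounded on one side and unbounded on the other, not by determinants.) Second, the gluing theorem (Theorem~\ref{thm:GlueingTheorem:Kh}) computes $\Khr$ of the closure by pairing $\Khr$ of one piece with $\BNr$ of the other; it does not assert that $\HF(\BNr(T),\BNr(Q_{r_n}))$ computes a reduced Bar-Natan homology of $K_n$, and since $\BNr(T)$ always contains a non-compact arc this pairing is not among the available formulas. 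The latter two points are repairable, but the missing grading analysis in the all-special-slope case is the key idea of the paper's proof and is absent from yours.
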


\begin{remark}
While we could not find Theorem~\ref{thm:agccc} written explicitly in the literature, it is certainly known to experts using standard techniques from three-manifold topology.  Rather than give this alternative proof here in full, we illustrate this by sketching a proof for a suitably generic case.  Suppose that $K \cup c$ is a hyperbolic link.  Then $K_n$ is obtained by performing $\nicefrac{-1}{n}$-surgery on $c$.  Hence, $K_n$ is hyperbolic for all but finitely many $n$ and further the hyperbolic volume of $K_n$ converges to the hyperbolic volume of $K \cup c$, which is strictly greater than that of any $K_n$.  The asymptotics of this convergence is described by the work of Neumann-Zagier \cite{NeumannZagier} and precludes having more than finitely many knots in the sequence of fixed volume. 
It is interesting that Khovanov homology and the hyperbolic volume establish the same result in this setting. 
We note that the behaviour of the Jones polynomial under twisting in relation with hyperbolic geometry has been considered, see \cite{CK} for example.
\end{remark}

Along the way, we establish the following technical result about the Khovanov multicurve invariants, which may be of independent interest:

\begin{theorem}\label{thm:intro:split}
	The invariants \(\Khr(T)\) and \(\BNr(T)\) detect if the Conway tangle \(T\) is split. 
\end{theorem}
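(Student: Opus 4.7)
The proof splits into two directions: split tangles yield multicurve invariants of a very specific ``split form,'' and conversely, split-form invariants arise only from split tangles.

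For the forward direction, assume $T = T_1 \sqcup T_2$ is split by a properly embedded disk $D \subset B^3$ whose boundary divides the four tangle endpoints into two pairs. The curve $\partial D$ traces out a simple closed curve $\gamma$ in $\FourPuncturedSphere$ separating it into two twice-punctured disks. Since the reduced Khovanov complex of a split tangle factors, up to grading shifts, as a tensor product of the complexes of $T_1$ and $T_2$, the description of $\Khr$ and $\BNr$ from \cite{KWZ} shows that each $T_i$ contributes only parallel rational arcs lying on its own side of $\gamma$, with multiplicities (and local systems in the $\BNr$ case) governed by the ranks of the reduced Khovanov invariants of the pieces. In particular, no component of $\Khr(T)$ or $\BNr(T)$ crosses $\gamma$.

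For the converse, suppose $\Khr(T)$ has this split form: every component is a rational arc lying on one side of some simple closed curve $\gamma \subset \FourPuncturedSphere$. After normalizing $\gamma$ by the mapping class action, the pairing theorem from \cite{KWZ} computes $\Kh(T(r))$ as a Lagrangian Floer homology of intersections of $\Khr(T)$ with $\Khr(Q_r)$; the split form forces this pairing to decompose as a product over the two halves of $\FourPuncturedSphere$. Consequently, $\Kh(T(r))$, and more strongly $\BNr(T(r))$, have the rank and module structure characteristic of a split link in $S^3$ for a range of slopes $r$ transverse to $\gamma$.

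The main obstacle is converting this Khovanov-theoretic signature into an honest splitting disk in $B^3$. The plan is to descend to the two-fold branched covers $\Sigma_2(T(r))$: the structural constraints above force $\Sigma_2(T(r))$ to be a non-trivial connected sum, and hence reducible, for infinitely many slopes $r$. Applying an equivariant sphere theorem to the covering involution produces essential spheres that can be chosen equivariantly; their quotients in $B^3$ are essential planar surfaces with boundary slope matching $\gamma$, which cap off to a splitting disk for $T$. The argument for $\BNr$ proceeds identically since the split form of the multicurve invariant is detected equally well by either theory.
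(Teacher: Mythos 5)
The forward direction of your proposal is essentially the paper's argument: for a (horizontally) split tangle, Bar-Natan's gluing formalism exhibits the invariant as a tensor product of the complexes of the two two-ended pieces, and after delooping everything becomes a complex over \(\F[H]\), so \(\BNr(T)\) consists of horizontal arcs and generalized figure-eight curves and \(\Khr(T)\) of parallel copies of \(\rKh_1(0)\) (note, though, that \(\BNr\) has compact figure-eight components as well, not only ``parallel rational arcs''). The genuine gap is in the converse.

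Two steps there are unsupported. First, the passage from ``\(\Khr(T)\) has split form, hence \(\Kh(T(r))\) has the rank and module structure characteristic of a split link'' to ``\(\Sigma_2(T(r))\) is a non-trivial connected sum'' is an assertion, not an argument: there is no theorem deriving reducibility of the double branched cover from the rank or module structure of Khovanov homology, and concluding that \(T(r)\) (or the relevant closure of \(T\)) is \emph{actually} split from such structural data is exactly the hard detection problem. This is the point where the paper imports external input: it forms the annular closure \(T_a(\infty)\), shows its annular Khovanov homology is concentrated in annular grading zero, and invokes Xie's instanton-based detection theorem to conclude that the annular link lies in a ball; some detection theorem of this kind (Xie's, or Lipshitz--Sarkar's split-link detection via the basepoint action) is unavoidable and cannot be replaced by the formal product decomposition of the pairing. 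Second, even granting reducibility of \(\Sigma_2(T(r))\), an equivariant essential sphere need not descend to a splitting sphere for \(T(r)\): it may meet the branch locus in two points and only exhibit a connected-sum (Conway) sphere --- reducibility of the double branched cover does not characterize splitness, as connected sums of knots already have reducible double branched covers --- and converting a splitting sphere of a filled link into a slope-\(0\) compressing disk for the tangle itself still requires a separate innermost-circle argument (this is the content of the paper's Lemma~\ref{lem:sphere_in_solid_torus}, carried out with the essential disk in the solid torus after Xie's theorem has been applied). As written, your converse therefore does not go through.
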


Note that Theorem~\ref{thm:intro:split} has been established for the analogous knot Floer homology multicurve invariant by Lidman, Moore, and Zibrowius \cite{LMZ}.  

\subsection*{Outline}
In Section~\ref{sec:background}, we give the requisite background on the immersed curve invariants for tangles.  In Section~\ref{sec:split_tangle_detection}, we prove Theorem~\ref{thm:intro:split}.  In Section~\ref{sec:ecsc}, we prove Theorem~\ref{thm:intro:cosmeticrational} (and hence Theorem~\ref{thm:intro:ecsc}).  In Section~\ref{sec:agccc}, we prove Theorem~\ref{thm:agccc} and Theorem~\ref{thm:josh}.  

\subsection*{Acknowledgements}
	We thank Joshua Wang for useful and motivating discussions around the Generalized Cosmetic Crossing Conjecture, and in particular for posing the main question that we answer in Section~\ref{sec:agccc}.
\pagebreak
\section{Review of the Khovanov multicurve invariants}\label{sec:background}

In this section, we review some properties of the immersed curve invariants \(\Khr\) and  \(\BNr\) of pointed Conway tangles from~\cite{KWZ}. 
We work exclusively over the field \(\fieldTwoElements\) of two elements and only summarize those properties that we will need in this paper; more elaborate introductions highlighting different aspects of the invariants can be found in~\cite{KWZ-strong, KWZthinness}.

Let \(T\) be an oriented \emph{pointed} Conway tangle, that is a four-ended tangle in the three-ball \(B^3\) with a choice of distinguished tangle end, which we mark by~\(\ast\). 
Denote by \(\FourPuncturedSphereKh\) the four-punctured sphere \(\partial B^3\smallsetminus \partial T\); the puncture marked by \(\ast\) will be called \emph{special}. 
We associate with such a tangle \(T\) invariants \(\BNr(T)\) and \(\Khr(T)\) that take the form of multicurves on \(\FourPuncturedSphereKh\). 
By multicurve, we mean a collection of immersed curves that carry certain extra data. 
Broadly speaking, there are two kinds of such curves: compact and non-compact. 
A compact immersed curve in \(\FourPuncturedSphereKh\) is an immersion of \(S^1\), considered up to regular homotopy, that (up to conjugation) defines a primitive element of \(\pi_1(\FourPuncturedSphereKh)\), and 
each of these curves is decorated with a local system, ie an invertible matrix over \(\fieldTwoElements\) 
considered up to matrix similarity. 
A non-compact immersed curve in \(\FourPuncturedSphereKh\) is a non-null-homotopic immersion of an interval, with ends on the three non-special punctures of \(\FourPuncturedSphereKh\); see \cite[Definition~1.4]{KWZ}. 
Non-compact curves do not carry local systems. 
In addition, all curves are equipped with a bigrading; more on this in Section~\ref{subsec:background:bigrading} below. 

\begin{remark}\label{rem:Khr:2m3pt}
	We often draw \(\FourPuncturedSphereKh\) as the plane plus a point at infinity minus the four punctures. To help identify this abstract surface with \(\partial B^3\smallsetminus \partial T\), we then add two dotted gray arcs that parametrize the surface, see   
	Figures~\ref{fig:Kh:example:tangle}--\ref{fig:Kh:example:Curve:Downstairs}. 
	The blue curves in these figures show the multicurves \(\BNr(P_{2,-3})\) and \(\Khr(P_{2,-3})\) for the pretzel tangle \(P_{2,-3}\);  cf~\cite[Example~6.7]{KWZ}.
	All components of these curves carry the (unique) one-dimensional local system.
\end{remark}

\begin{figure}[b]
	\centering
	\begin{subfigure}{0.30\textwidth}
		\centering
		\(\pretzeltangleKh\)
		\caption{The pretzel tangle \(P_{2,-3}\)}\label{fig:Kh:example:tangle}
	\end{subfigure}
	\begin{subfigure}{0.28\textwidth}
		\centering
		\(\pretzeltangleDownstairsBNr\)
		\caption{\(\BNr(P_{2,-3})\)}\label{fig:BN:example:Curve:Downstairs}
	\end{subfigure}
	\begin{subfigure}{0.28\textwidth}
		\centering
		\(\pretzeltangleDownstairsKhr\)
		\caption{\(\Khr(P_{2,-3})\)}\label{fig:Kh:example:Curve:Downstairs}
	\end{subfigure}
	\bigskip
	\\
	\begin{subfigure}{0.42\textwidth}
		\centering
		\(\pretzeltangleUpstairsBNr\)
		\caption{A lift of \(\BNr(P_{2,-3})\) to \(\PuncturedPlane\) }\label{fig:BN:example:Curve:Upstairs}
	\end{subfigure}
	\begin{subfigure}{0.55\textwidth}
		\centering
		\(\pretzeltangleUpstairsKhr\)
		\caption{A lift of \(\Khr(P_{2,-3})\) to \(\PuncturedPlane\) }\label{fig:Kh:example:Curve:Upstairs}
	\end{subfigure}
	\caption{The multicurve invariants for the pretzel tangle \(P_{2,-3}\). 
	Under the covering \(\PuncturedPlane\rightarrow\FourPuncturedSphereKh\), the shaded regions in (b+c) correspond to the shaded regions in (d+e). 
 }\label{fig:Kh:example}
\end{figure} 

\subsection{The construction of the multicurves}\label{subsec:background:construction} The starting point is the algebraic tangle invariant \(\KhTl{T}\) due to Bar-Natan.  The invariant
\(\KhTl{T}\) is a chain complex over a certain cobordism category, whose objects are crossingless tangle diagrams \cite{BarNatanKhT};  we refer to~\cite[Section~2]{KWZ} for a detailed introduction to complexes over categories/algebras, and the equivalent viewpoint through type~D structures.
In \cite[Theorem~1.1]{KWZ}, it was shown that any such complex can be rewritten as a chain complex \(\DD(T)\) over the following  category $\BNAlgH$, consisting of two objects and morphisms equal to paths in a quiver modulo relations: 
\begin{equation*}\label{eq:B_quiver}
\BNAlgH
\coloneqq
\fieldTwoElements\Big[
\begin{tikzcd}[row sep=2cm, column sep=1.5cm]
\DotB
\arrow[leftarrow,in=145, out=-145,looseness=5]{rl}[description]{D_{\bullet}}
\arrow[leftarrow,bend left]{r}[description]{S_{\circ}}
&
\DotC
\arrow[leftarrow,bend left]{l}[description]{S_{\bullet}}
\arrow[leftarrow,in=35, out=-35,looseness=5]{rl}[description]{D_{\circ}}
\end{tikzcd}
\Big]\Big/\Big(
\parbox[c]{90pt}{\footnotesize\centering
	$D_{\bullet} \cdot S_{\circ}=0=S_{\circ}\cdot D_{\circ}$\\
	$D_{\circ}\cdot S_{\bullet}=0=S_{\bullet}\cdot D_{\bullet}$
}\Big)
\end{equation*}
Here, the objects \(\DotB\) and \(\DotC\) correspond to the crossingless tangles \(\Lo\) and \(\Li\), respectively. 

We will refer to $\BNAlgH$ as a (quiver) algebra, and to \(\DD(T)\) as a chain complex over the algebra $\BNAlgH$.
Defining \(D\coloneqq D_{\bullet} + D_{\circ}\) and \(S\coloneqq S_{\bullet} + S_{\circ}\) often allows us to drop the subscripts of the algebra elements of \(\BNAlgH\).
The chain homotopy type of \(\DD(T)\) is an invariant of the tangle~\(T\). 
Moreover, using the central element 
\[
H\coloneqq D+S^2 = D_{\bullet} + D_{\circ} + S_{\circ}S_{\bullet} + S_{\bullet}S_{\circ} ~ \in ~ \BNAlgH 
\]   
we define a chain complex \(\DD_1(T)\) as the mapping cone 
\[\DD_1(T)\coloneqq \Big[q^{-1}h^{-1}\DD(T)\xrightarrow{H\cdot \id} q^{1}h^0\DD(T)\Big]\]
where $H\cdot \id$ is the endomorphism of $\DD(T)$ defined by $x\xrightarrow{H}x$ for all generators $x$ of $\DD(T)$.
The chain homotopy type of \(\DD_1(T)\)  is also a tangle invariant.

\begin{figure}[b]
	\centering
	\begin{subfigure}[t]{0.29\textwidth}
		\centering
		$\PairingTrefoilLoopINTRO$
		\caption{$\textcolor{red}{\gamma\hateqq
				[
				\protect\begin{tikzcd}[nodes={inner sep=2pt}, column sep=23pt,ampersand replacement = \&]
				\protect\DotCred
				\protect\arrow{r}{D+S^2}
				\protect\&
				\protect\DotCred
				\protect\end{tikzcd}
				]}$}\label{fig:exa:classification:curves:loop}
	\end{subfigure}
	\begin{subfigure}[t]{0.29\textwidth}
		\centering
		$\PairingTrefoilArcINTRO$
		\caption{$\textcolor{blue}{\gamma'\hateqq
				[
				\protect\begin{tikzcd}[nodes={inner sep=2pt}, column sep=13pt,ampersand replacement = \&]
				\protect\DotCblue
				\protect\arrow{r}{S}
				\protect\&
				\protect\DotBblue
				\protect\arrow{r}{D}
				\protect\&
				\protect\DotBblue
				\protect\arrow{r}{S^2}
				\protect\&
				\protect\DotBblue
				\protect\end{tikzcd}
				]}$}\label{fig:exa:classification:curves:arc}
	\end{subfigure}
	\begin{subfigure}[t]{0.29\textwidth}
		\centering
		$\PairingBothINTRO$
		\vspace{5pt}
		\caption{\(\HF(\textcolor{red}{\gamma},\textcolor{blue}{\gamma'})\cong\fieldTwoElements^3\)}\label{fig:exa:classification:curves:pairing}
	\end{subfigure}
	\caption{Two immersed curves and their corresponding chain complexes (a+b) and their Lagrangian Floer homology (c); cf \cite[Examples~1.6 and~1.7]{KWZ}
	}\label{fig:exa:classification:curves}
\end{figure}
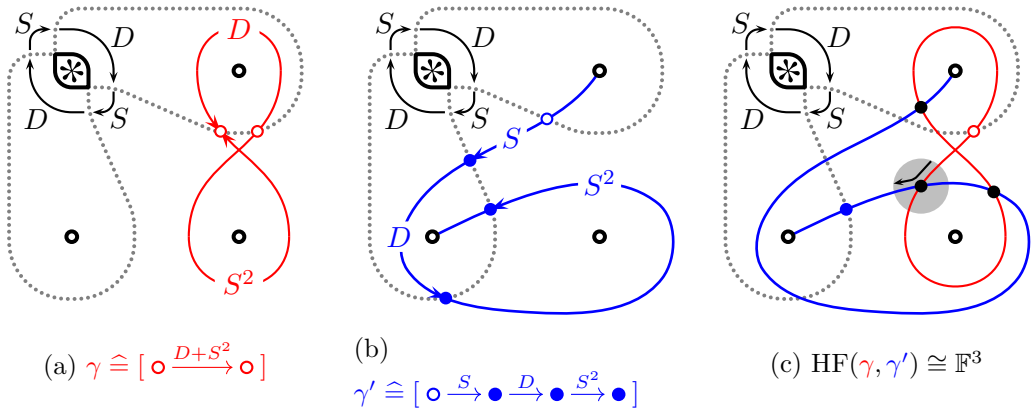

The multicurve invariants \(\BNr(T)\) and \(\Khr(T)\) are geometric interpretations of $\DD(T)$ and $\DD_1(T)$ respectively, made possible by the following classification result: 
The homotopy equivalence classes of chain complexes over \(\BNAlgH\) are in one-to-one correspondence with multicurves on the four-punctured sphere \(\FourPuncturedSphereKh\) \cite[Theorem~1.5]{KWZ}. 
In a little more detail, this correspondence (which we denote by $\hateqq$) uses the parametrization of \(\FourPuncturedSphereKh\) given by the two dotted arcs described in Remark~\ref{rem:Khr:2m3pt}. 
We will generally assume that the multicurves intersect these arcs minimally. Then, roughly speaking, the intersection points correspond to generators of the associated chain complexes and paths between those intersection points correspond to the differentials. 
The two examples in Figure~\ref{fig:exa:classification:curves} should give the reader a general impression how this works.

\begin{example}\label{exa:rational_tangles}
	For the trivial tangle \(Q_\infty=\Li\), the chain complex \(\DD(Q_\infty)\) consists of a single object \(\DotC\) and the differential vanishes. The corresponding multicurve \(\BNr(Q_{\infty})\) consists of a single vertical arc connecting the two non-special tangle ends. The chain complex \(\DD_1(Q_\infty)\) and the corresponding curve \(\Khr(Q_\infty)\) is shown in Figure~\ref{fig:exa:classification:curves:loop}.	The local system on this curve is one-dimensional.
	
	The tangle \(Q_{\nicefrac{1}{3}}=\ThreeTwistTangle\) is obtained from the trivial tangle \(Q_\infty\) by adding three twists to the two lower tangle ends. Its invariant \(\BNr(Q_{\nicefrac{1}{3}})\) is shown in Figure~\ref{fig:exa:classification:curves:arc}. Note that it agrees with the vertical arc \(\BNr(Q_{\infty})\) up to three twists. This is not a coincidence; one can show that adding twists to \emph{any} tangle (not just a rational tangle) corresponds to adding twists to the multicurves; see \cite[Theorem~1.13]{KWZ}.
	Thus, the identification of \(\partial B^3\smallsetminus \partial T\) with the abstract surface~\(\FourPuncturedSphereKh\) containing the multicurves is natural.
\end{example}

\subsection{A gluing theorem}\label{subsec:background:gluing}

\begin{figure}[t]
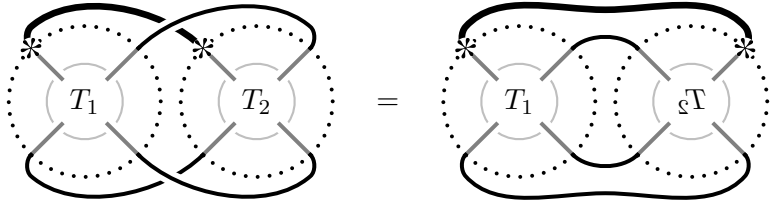

	\centering
	\(
	\tanglepairingI
	\quad = \quad
	\tanglepairingII
	\)
	\caption{Two tangle decompositions defining the link \(T_1\cup T_2\). The tangle \protect\reflectbox{\(T_2\)} is the result of rotating \(T_2\) around the vertical axis. By rotating the entire link on the right-hand side around the vertical axis, we can see that \(T_1\cup T_2=T_2\cup T_1\).}
	\label{fig:tanglepairing} 
\end{figure}

The multicurve invariants satisfy various gluing formulas \cite[Theorem~1.9]{KWZ}. The one that we will use in this paper is the following:

\begin{theorem}\label{thm:GlueingTheorem:Kh}
	Let \(L=T_1\cup T_2\) be the result of gluing two oriented pointed Conway tangles as in Figure~\ref{fig:tanglepairing} such that the orientations match. 
	Let \(\mirror\)
	be the map identifying the two four-punctured spheres. 
	Then  
	\[
	\Khr(L)
	\cong 
	\HF\left(\mirror(\Khr(T_1)),\BNr(T_2)\right)
	\cong
	\HF\left(\mirror(\BNr(T_1)),\Khr(T_2)\right)
	\]
\end{theorem}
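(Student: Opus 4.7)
My plan is a two-step strategy: first establish an algebraic pairing formula for the $\BNAlgH$-complexes underlying $\BNr(T_i)$ and $\Khr(T_i)$, then translate this into the Floer-theoretic statement of the theorem using the classification of such complexes by multicurves.

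\emph{Step 1 (algebraic pairing).} Bar-Natan's tangle invariant $\KhTl{\cdot}$ is designed to be functorial under gluing, so that the Khovanov complex of $L = T_1 \cup T_2$ is obtained from $\KhTl{T_1}$ and $\KhTl{T_2}$ by a pairing in Bar-Natan's cobordism category. After porting both factors to $\BNAlgH$-complexes via \cite[Theorem~1.1]{KWZ}, I expect this functoriality to take the form of a morphism-space identification: writing $\mirror^\ast\DD(T_1)$ for the $\BNAlgH$-complex obtained from $\DD(T_1)$ by the algebraic operation corresponding to flipping $T_1$ so that its boundary matches $T_2$'s with opposite orientation (the complex-level counterpart of the geometric map $\mirror$), one expects
\[
\BNr(L) \;\simeq\; \Mor_{\BNAlgH}\!\bigl(\mirror^\ast\DD(T_1),\,\DD(T_2)\bigr).
\]
Because $H$ is central in $\BNAlgH$, the mapping cone $\DD_1 = \Cone(H\cdot\id)$ can be slid through $\Mor$ on either argument, giving
\[
\Khr(L) \;\simeq\; \Mor\!\bigl(\mirror^\ast\DD_1(T_1),\,\DD(T_2)\bigr) \;\simeq\; \Mor\!\bigl(\mirror^\ast\DD(T_1),\,\DD_1(T_2)\bigr).
\]

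\emph{Step 2 (Floer interpretation).} Under the classification $C\hateqq\gamma$ of \cite[Theorem~1.5]{KWZ}, which places multicurves in minimal position with the parametrizing dotted arcs of Remark~\ref{rem:Khr:2m3pt} and reads generators/differentials from their intersections with those arcs, I expect the symmetric form of the dictionary to identify
\[
\Mor_{\BNAlgH}(C_1,C_2) \;\cong\; \HF(\gamma_1,\gamma_2),
\]
where the right-hand side is computed by placing $\gamma_1,\gamma_2$ in minimal position and taking intersection points as generators, with differentials coming from Lagrangian bigons, exactly as illustrated in Figure~\ref{fig:exa:classification:curves}. Under this dictionary the complex-level flip $\mirror^\ast$ of Step~1 corresponds to the geometric mirror $\mirror$ on $\FourPuncturedSphereKh$, and substituting into the formulas of Step~1 yields both displayed equivalences of the theorem. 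The symmetry $T_1 \cup T_2 = T_2 \cup T_1$ visible in Figure~\ref{fig:tanglepairing} provides a built-in consistency check, forcing the two displayed pairings to agree.

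The main obstacle is the bookkeeping in Step~1: one must verify that Bar-Natan's categorical gluing, once translated to the $\BNAlgH$-formalism via \cite[Theorem~1.1]{KWZ}, really takes the form of $\Mor$ with the correct mirror operation on the first factor, and that the quantum and homological bigradings of this pairing agree with those of $\Khr(L)$ and $\BNr(L)$. Centrality of $H$ is the key formal input that allows the $\DD_1$-cone to move freely across the pairing and that makes the two stated formulas equivalent. Once Step~1 is in place, Step~2 amounts to a direct reading of how the classification of \cite[Theorem~1.5]{KWZ} identifies morphism spaces of complexes over $\BNAlgH$ with intersection pairings of their multicurves.
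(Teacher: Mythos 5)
Note that the paper does not prove Theorem~\ref{thm:GlueingTheorem:Kh} itself: it is background quoted directly from \cite[Theorem~1.9]{KWZ}, so there is no in-paper proof to compare against. Your outline --- Bar-Natan's gluing rewritten as a morphism space over $\BNAlgH$ with one factor mirrored, the cone on the central element $H$ slid through either argument of $\Mor$, and then $\HF \cong \Homology(\Mor)$ via the classification theorem \cite[Theorem~1.5]{KWZ} --- is essentially the strategy of the proof in that reference, with the genuine work (which you correctly flag) lying in verifying that the glued Bar-Natan complex really is the morphism complex with the mirror on one factor and with matching bigradings.
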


The Lagrangian Floer homology \(\HF(\textcolor{red}{\gamma},\textcolor{blue}{\gamma'})\) between two curves \(\textcolor{red}{\gamma}\) and \(\textcolor{blue}{\gamma'}\) is a vector space that can be computed as follows. First, we draw the curves in such a way that minimizes the number of intersection points between \(\textcolor{red}{\gamma}\) and \(\textcolor{blue}{\gamma'}\). 
\(\HF(\textcolor{red}{\gamma},\textcolor{blue}{\gamma'})\) is then equal to a vector space freely generated by those intersection points, provided that the curves are not homotopic to each other \cite[Theorem~5.25]{KWZ}. 
(We will always be able to make this assumption in this paper.)
For instance, with Example~\ref{exa:rational_tangles} and Figure~\ref{fig:exa:classification:curves} in mind, the Khovanov homology of the trefoil can be computed as follows:
$$\Khr(\Li \cup \ThreeTwistTangle) \cong \HF( \Khr(\Li), \BNr(\ThreeTwistTangle)=\F^3$$
Finally, the Lagrangian Floer homology between two multicurves is simply the direct sum of the Lagrangian Floer homologies between individual components.

\subsection{Gradings}\label{subsec:background:bigrading}
Khovanov homology is a \emph{bigraded} homology theory, and this bigrading is often what makes it a powerful invariant. 
The multicurves \(\BNr(T)\) and \(\Khr(T)\) also carry bigradings. We now describe how the gradings work: first on the algebra $\BNAlgH$, then on chain complexes, then on multicurves, and then finally on Lagrangian Floer homology between multicurves. 

Equip the algebra \(\BNAlgH\) with  quantum grading \(q\), which is determined by 
\[
q(D_{\bullet}) = q(D_{\circ}) = -2
\qquad 
\text{and}
\qquad
q(S_{\bullet}) = q(S_{\circ}) = -1
\] 
The homological grading is defined to be $0$ for all elements of \(\BNAlgH\).

Differentials of bigraded chain complexes over \(\BNAlgH\) are required to preserve quantum grading and increase the homological grading by $1$. Concretely this means that if a differential contains a morphism $x \xrightarrow{a}y$ (where $x$ and $y$ are generators of the complex and $a\in\BNAlgH$), then $q(a)+q(y)-q(x)=0$ and $h(a)+h(y)-h(x)=1$. 
We often specify the quantum gradings \(\QGrad{q}\) of generators of such complexes via superscripts, like so: \(\GGzqh{x}{}{q}{}\).

The bigrading on a multicurve takes the form of a bigrading on the intersection points between the multicurve and the two parametrizing arcs, ie the generators of the corresponding chain complex over \(\BNAlgH\). 
If \(T\) is an unoriented tangle, the bigradings on \(\BNr(T)\) and \(\Khr(T)\) are only well-defined as relative bigradings; to fix the overall shift, an orientation of \(T\) is required. 

Let \(\textcolor{red}{\gamma}\) and \(\textcolor{blue}{\gamma'}\) be two bigraded multicurves and suppose \(\textcolor{red}{X\hateqq\gamma}\) and \(\textcolor{blue}{X'\hateqq\gamma'}\) are the corresponding bigraded chain complexes over \(\BNAlgH\). 
Then \(\HF(\textcolor{red}{\gamma},\textcolor{blue}{\gamma'})\) also carries a bigrading. 
It can be computed using the fact that this vector space is bigraded isomorphic to the homology of the morphism space \(\Mor(\textcolor{red}{X},\textcolor{blue}{X'})\) \cite[Theorem~1.5]{KWZ} (see the discussion before~\cite[Definition~2.4]{KWZ} for the definition of the differential on a morphism space between two complexes). 
The quantum grading of a morphism 
\(
\begin{tikzcd}[nodes={inner sep=2pt}, column sep=13pt,ampersand replacement = \&]
\GGzqh{x}{}{b}{}
\arrow{r}{\alpha}
\&
\GGzqh{y}{}{a}{}
\end{tikzcd}
\), 
where \(\alpha\in\BNAlgH\), is computed using the formula \(\QGrad{a}-\QGrad{b}+q(\alpha)\); an analogous formula holds for the homological grading. 
Each intersection point generating \(\HF(\textcolor{red}{\gamma},\textcolor{blue}{\gamma'})\) corresponds to a morphism from which we can read off the bigrading \cite[Section~7]{KWZ-strong}. For instance, the highlighted intersection point in Figure~\ref{fig:exa:classification:curves:pairing} corresponds to the morphism \(
\begin{tikzcd}[nodes={inner sep=2pt}, column sep=13pt,ampersand replacement = \&]
\DotCred
\arrow{r}{S}
\&
\DotBblue
\end{tikzcd}
\), 
so the bigrading of this intersection point is equal to 
\[
q(\DotB) = q(\DotBblue)-q(\DotCred)+q(S)
\quad
\text{ and }
\quad
h(\DotB) = h(\DotBblue)-h(\DotCred)
\]

\subsection{\texorpdfstring{Geography of components of \(\Khr\)}{Geography of components of Khr}}\label{subsec:background:geography} 

We now recall some basic facts about $\Khr(T)$ and $\BNr(T)$ from \cite[Section~6]{KWZ}.
In this paper, we will focus only on tangles without closed components. For such tangles, 
\(\BNr(T)\) consists of a single non-compact component and a (possibly zero) number of compact components.  
In contrast, $\Khr(T)$ consists of compact components only. 

Often, multicurves become easier to manage when considered in a certain covering space of~\(\FourPuncturedSphereKh\), namely the planar cover that factors through the toroidal two-fold cover: 
$$
(\R^2 \smallsetminus \Z^2) \to (T^2 \smallsetminus 4\pt) \to \FourPuncturedSphereKh
$$
This is illustrated in Figure~\ref{fig:Kh:example} for the multicurve invariants of the pretzel tangle \(P_{2,-3}\).

\begin{figure}[t]
	\centering
	\begin{subfigure}{0.3\textwidth}
		\centering
		\labellist 
		\footnotesize \color{blue}
		\pinlabel $\sKh_{2}(0)$ at 65 115
		\pinlabel $\rKh_{1}(0)$ at 65 28
		\endlabellist
		\includegraphics[scale=1]{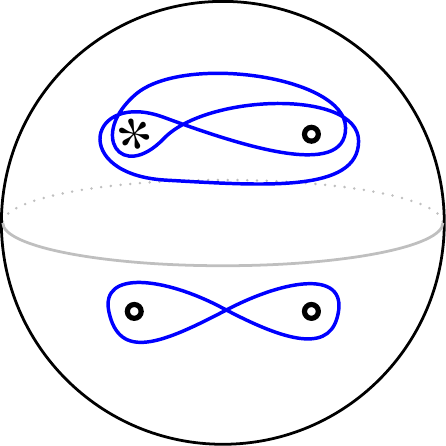}
		\caption{\(n=1\)}\label{fig:geography:Downstairs1}
	\end{subfigure}
	\begin{subfigure}{0.3\textwidth}
		\centering
		\labellist 
		\footnotesize \color{blue}
		\pinlabel $\sKh_{4}(0)$ at 65 118
		\pinlabel $\rKh_{2}(0)$ at 65 13
		\endlabellist
		\includegraphics[scale=1]{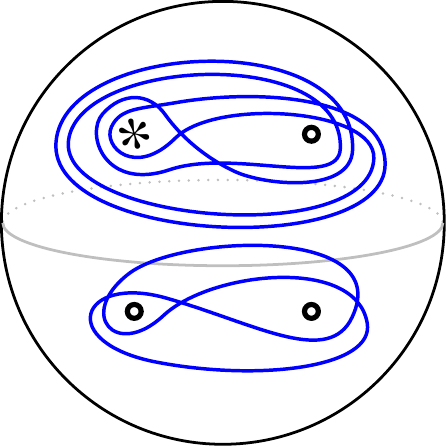}
		\caption{\(n=2\)}\label{fig:geography:Downstairs2}
	\end{subfigure}
	\begin{subfigure}{0.3\textwidth}
		\centering
		\labellist 
		\footnotesize \color{blue}
		\pinlabel $\sKh_{6}(0)$ at 65 121
		\pinlabel $\rKh_{3}(0)$ at 65 10
		\endlabellist
		\includegraphics[scale=1]{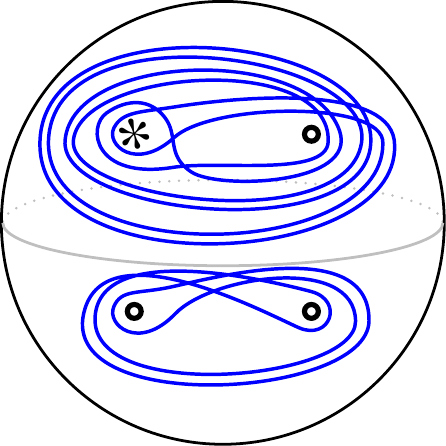}
		\caption{\(n=3\)}\label{fig:geography:Downstairs3}
	\end{subfigure}
	\\
	\begin{subfigure}{0.9\textwidth}
		\centering
		\(\GeographyCovering\)
		\caption{}\label{fig:geography:Upstairs}
	\end{subfigure}
	\caption{The curves \(\rKh_n(0)\) and \(\sKh_{2n}(0)\) (a--c) and their lifts to \(\PuncturedPlane\) (d). While not visually apparent, the curves \(\rKh_n(0)\) are invariant under the Dehn twist interchanging the lower two punctures. 
	}\label{fig:geography}
\end{figure}
 
\begin{definition}
	Given an immersed curve $c \looparrowright \FourPuncturedSphereKh$, denote by $\tc$ a lift of $\c$ to the cover \(\PuncturedPlane\). 
	For \(n\in\mathbb N\), let \(\rKh_{n}(0)\) and \(\sKh_{2n}(0)\) be the immersed curves in \(\FourPuncturedSphereKh\) that respectively admit lifts to the curves $\tr_{n}(0)$ and $\ts_{2n}(0)$ in Figure~\ref{fig:geography:Upstairs}; curves for $n=1,2,3$ are illustrated in Figures~\ref{fig:geography:Downstairs1}--\ref{fig:geography:Downstairs3}.
	For every \(\nicefrac{p}{q}\in\QPI\), we respectively define the curves $\rKh_n(\nicefrac{p}{q})$ and $\sKh_{2n}(\nicefrac{p}{q})$ as the images of \(\rKh_{n}(0)\) and \(\sKh_{2n}(0)\) under the action of
	\[
	\begin{bmatrix*}[c]
	q & r \\
	p & s
	\end{bmatrix*}
	\]
	considered as an element the mapping class group fixing the special puncture $\Mod(\FourPuncturedSphereKh) \cong \PSL(2,\Z)$, where \(qs-pr=1\). 
	(This transformation maps straight lines of slope 0 to straight lines of slope \(\nicefrac{p}{q}\).)
	We call \(\rKh_{n}(\nicefrac{p}{q})\) a curve of \emph{rational type, slope \(\nicefrac{p}{q}\), and length \(n\)}.    
	We call \(\sKh_{2n}(\nicefrac{p}{q})\) a curve of \emph{special type, slope \(\nicefrac{p}{q}\), and length \(2n\)}. 
	The local systems on all these curves are defined to be trivial. 
\end{definition}

The following classification result is \cite[Theorem~6.5]{KWZthinness}. 

\begin{theorem}
	For any pointed Conway tangle \(T\), every component of \(\Khr(T)\) is equal to \(\rKh_n(\nicefrac{p}{q})\) or \(\sKh_{2n}(\nicefrac{p}{q})\) for some \(n\in\mathbb N\) and \(\nicefrac{p}{q}\in\QPI\), up to some bigrading shift. 
	In other words, components of \(\Khr(T)\) are completely classified by their type, slope, length, and bigrading. 
\end{theorem}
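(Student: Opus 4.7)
The plan is to exploit the defining feature of $\DD_1(T)$: it is constructed as the mapping cone of multiplication by the central element $H = D + S^2$ on $\DD(T)$. Consequently the action of $H$ on $\DD_1(T)$ is null-homotopic, with an explicit null-homotopy provided by the identity from the first copy of $\DD(T)$ to the second copy in the cone. Through the correspondence between chain complexes over $\BNAlgH$ and multicurves on $\FourPuncturedSphereKh$ recalled in Section~\ref{subsec:background:construction}, this algebraic feature must translate into a geometric constraint satisfied by every component of $\Khr(T)$.

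First I would invoke the general classification [KWZ, Theorem~1.5] to represent an arbitrary component $\c$ of $\Khr(T)$ as an indecomposable chain complex $X_\c$ over $\BNAlgH$, recorded as a primitive homotopy class of immersed curve together with a local system $M\in\GL_n(\fieldTwoElements)$ (up to conjugation). Because $H$ is central, multiplication by $H$ on $X_\c$ is a chain endomorphism, and since $X_\c$ is a direct summand of $\DD_1(T)$, this endomorphism inherits the null-homotopy. The problem is thereby reduced to a purely algebraic classification: which indecomposable curves with local systems admit a null-homotopy of $H$-multiplication?

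Next I would carry out this classification by working in the planar cover $\PuncturedPlane \to \FourPuncturedSphereKh$ introduced in Section~\ref{subsec:background:geography}. A lift $\tilde{\c}$ of $\c$ determines the sequence of algebra elements in $\{D_\bullet,D_\circ,S_\bullet,S_\circ\}$ labelling the differentials of $X_\c$, and the action of $H = D_\bullet + D_\circ + S_\circ S_\bullet + S_\bullet S_\circ$ on $X_\c$ can be read off directly from the local geometry of $\tilde{\c}$ together with the local system $M$. The curves $\rKh_n(\nicefrac{p}{q})$ and $\sKh_{2n}(\nicefrac{p}{q})$ are distinguished precisely by the fact that each segment of $\tilde{\c}$ between consecutive intersections with the parametrizing arcs admits an $H$-null-homotopy compatible with neighboring segments; this is visible in the lifts drawn in Figure~\ref{fig:geography:Upstairs}, where the two families correspond to the two ways a straight line can be positioned relative to the lattice of punctures.

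The main obstacle — and technical heart of the proof — is the converse direction: showing that no other curve type and no non-trivial local system can admit such a null-homotopy. For the local system, the induced $H$-action on the $n$-dimensional space of generators attached to one intersection point is a polynomial in $M$, and requiring the action to vanish up to homotopy forces $M$ to be conjugate to the identity, so $n=1$ and the local system is trivial. For the topological type, I would show that any homotopy class not of the form $\rKh_n(\nicefrac{p}{q})$ or $\sKh_{2n}(\nicefrac{p}{q})$ traverses the cover $\PuncturedPlane$ in a way that produces a non-trivial loop in the mapping cone of $H$-multiplication, i.e.\ a non-zero class in the homology of $\Mor(X_\c,X_\c)$ represented by $H\cdot\id$. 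Together with the bigrading shift freedom, this finishes the classification up to shift as claimed.
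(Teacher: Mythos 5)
First, a point of comparison: the paper does not prove this statement at all --- it is quoted from \cite[Theorem~6.5]{KWZthinness} --- so your proposal has to stand on its own as a proof. Its first step is correct and standard: since \(\DD_1(T)\) is by definition the cone of the central element \(H\) on \(\DD(T)\), the endomorphism \(H\cdot\id\) is null-homotopic on \(\DD_1(T)\), and composing a null-homotopy with the inclusion and projection of a direct summand shows that \(H\cdot\id\) is null-homotopic on every component of \(\Khr(T)\). This observation is essentially already present in the discussion of the geography problem in \cite{KWZ}; for instance it immediately rules out arc components, since \(H\cdot\id\) is visibly not null-homotopic on \([\DotB]\) or \([\DotC]\).

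The genuine gap is everything after that reduction, which is where the entire content of the theorem lives and where you only say what you ``would show''. You need the purely algebraic claim that every compact curve with local system on which \(H\cdot\id\) is null-homotopic is \(\rKh_n(\nicefrac{p}{q})\) or \(\sKh_{2n}(\nicefrac{p}{q})\) with trivial local system, and nothing in the proposal establishes it. Whether \(H\cdot\id\) is null-homotopic on a component is a global question: a homotopy is an arbitrary morphism whose components may be supported anywhere along the curve and carry labels \(D^a\), \(S^b\) of arbitrary order, so it cannot be decided ``segment by segment'' at a single intersection point, and the local-system step in particular is not a proof --- the \(H\)-action is not simply ``a polynomial in \(M\)'' at one generator, and even if \(M\) were forced to be conjugate to the identity that would not give \(n=1\) (a trivial rank-\(n\) local system only means the component splits into parallel copies). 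For the topological type you assert that every non-linear homotopy class yields a non-zero class of \(H\cdot\id\) in \(\Homology(\Mor(X,X))\); that is a restatement of (the algebraic strengthening of) the theorem, with no mechanism offered, and a priori there could be non-linear curves or curves with non-trivial local systems that are \(H\)-torsion, in which case your reduction could not conclude, since being a summand of a cone only places components in the possibly larger class of \(H\)-torsion curves. Note also that the result your reduction would require is strictly stronger than the cited theorem (it would prove linearity of the components of the cone of \(H\) on \emph{any} complex over \(\BNAlgH\), with no tangle-theoretic input), whereas the proof the paper relies on in \cite{KWZthinness} uses substantially more than the cone observation, which was already available in \cite{KWZ}. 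As it stands, the proposal reduces the statement to an unproven --- and not obviously true --- algebraic classification and leaves that classification unargued.
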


	As already mentioned in Example~\ref{exa:rational_tangles}, the multicurve invariants are natural with respect to adding twists; these twists generate $\Mod(\FourPuncturedSphereKh)$. 
	Thus, the invariant \(\Khr(Q_{\nicefrac{p}{q}})\) of a \(\nicefrac{p}{q}\)-rational tangle \(Q_{\nicefrac{p}{q}}\) is equal to \(\rKh_1(\nicefrac{p}{q})\), justifying the terminology. 
	In fact, we have the following detection result \cite[Theorem~5.7]{KWZthinness}.
	
	\begin{theorem}\label{thm:rational_tangle_detection}
		A pointed Conway tangle \(T\) is rational if and only if \(\Khr(T)\) consists of a single component \(\rKh_1(\nicefrac{p}{q})\) for some \(\nicefrac{p}{q}\in\QPI\). 
	\end{theorem}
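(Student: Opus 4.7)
The forward direction is immediate from Example~\ref{exa:rational_tangles} combined with naturality of $\Khr$ under twisting \cite[Theorem~1.13]{KWZ}: the element of $\Mod(\FourPuncturedSphereKh) \cong \PSL(2,\Z)$ that converts $Q_\infty$ into $Q_{\nicefrac{p}{q}}$ carries the loop $\rKh_1(\infty) = \Khr(Q_\infty)$ to $\rKh_1(\nicefrac{p}{q})$ by definition of the latter.

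For the reverse direction, suppose $\Khr(T) = \rKh_1(\nicefrac{p}{q})$. The plan is to produce two distinct rational fillings of $T$ that are unknots, and then conclude via the Montesinos trick and Gordon--Luecke's knot complement theorem \cite{GordonLuecke} that $T$ is rational. Let $\nicefrac{r}{s}$ be a Farey neighbor of $\nicefrac{p}{q}$, meaning $|ps - qr| = 1$. By the gluing theorem (Theorem~\ref{thm:GlueingTheorem:Kh}),
\[
\Khr(T(\nicefrac{r}{s})) \;\cong\; \HF\bigl(\mirror(\rKh_1(\nicefrac{p}{q})),\BNr(Q_{-\nicefrac{r}{s}})\bigr).
\]
Both curves lift to straight lines in the planar cover $\PuncturedPlane$, and the Farey neighbor condition guarantees that these lifts meet in exactly one point per fundamental domain of the deck action; hence the Floer homology has rank one. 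Running the same computation with $T$ replaced by $Q_{\nicefrac{p}{q}}$, where the filling $Q_{\nicefrac{p}{q}}(\nicefrac{r}{s})$ is an unknot, and comparing bigradings, I conclude that $\Khr(T(\nicefrac{r}{s}))$ is isomorphic to $\Khr$ of the unknot.

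By the Kronheimer--Mrowka unknot detection theorem for reduced Khovanov homology, $T(\nicefrac{r}{s})$ is therefore the unknot. Since any slope in $\QPI$ admits infinitely many Farey neighbors, I pick two distinct ones $\nicefrac{r_1}{s_1}$ and $\nicefrac{r_2}{s_2}$, yielding two distinct rational slopes along which $T$ fills to an unknot. Via the Montesinos trick, the double branched cover $\Sigma_2(B^3, T)$ is a three-manifold with torus boundary admitting two distinct Dehn fillings both giving $S^3$, which by \cite{GordonLuecke} forces $\Sigma_2(B^3, T)$ to be a solid torus. This is equivalent to $T$ being rational.

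The main obstacle is the bigraded Floer computation underpinning the second step: one must verify that the lifts of $\rKh_1(\nicefrac{p}{q})$ and $\BNr(Q_{-\nicefrac{r}{s}})$ intersect in a single point carrying precisely the bigrading of the unknot's Khovanov homology whenever $\nicefrac{p}{q}$ and $\nicefrac{r}{s}$ are Farey neighbors. Thanks to the classification of components of $\Khr$ by slope, length, and bigrading, together with the explicit action of $\PSL(2,\Z)$ on straight-line lifts, this reduces to a direct, if detailed, geometric computation, after which the topological conclusion via unknot detection and the knot complement theorem follows in standard fashion.
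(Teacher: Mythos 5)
First, note that the paper itself contains no proof of this statement: it is imported verbatim from \cite[Theorem~5.7]{KWZthinness}, so your argument has to stand on its own rather than be measured against an internal proof. Much of it does stand: the forward direction is exactly the naturality-under-twisting observation the paper makes, and the rank computation in the reverse direction is correct --- by Theorem~\ref{thm:GlueingTheorem:Kh} and Lemma~\ref{lem:pairing_linear_curves:dimension_formula}, if \(\Khr(T)=\rKh_1(\nicefrac{p}{q})\) (length~\(1\), trivial local system), then for every Farey neighbour \(\nicefrac{r}{s}\) of \(\nicefrac{p}{q}\) one gets \(\dim_{\fieldTwoElements}\Khr(T(\nicefrac{r}{s}))=1\), and the Montesinos/Gordon--Luecke endgame (two distinct \(S^3\)-fillings of \(\Sigma(B^3,T)\) force a solid torus) is sound, provided you also cite Lickorish's theorem that a tangle with solid-torus double branched cover is rational --- that equivalence is not automatic.

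The genuine gap is at the unknot-detection step. Kronheimer--Mrowka's theorem applies to \emph{knots}, with \(\Z\) (or \(\Q\)) coefficients, and you apply it to \(T(\nicefrac{r}{s})\) without ruling out that this filling is a two-component link (which happens for filling slopes in one of the three parity classes mod~\(2\), depending on the connectivity of \(T\)) or that \(T\) has closed components (which the statement of the theorem does not exclude). As written, ``\(T(\nicefrac{r}{s})\) is therefore the unknot'' does not follow. The repair needs an input you never invoke: over \(\fieldTwoElements\), the reduced Khovanov homology of an \(n\)-component link has dimension at least \(2^{n-1}\) (Lee/Bar-Natan spectral sequence), so total dimension~\(1\) forces \(T(\nicefrac{r}{s})\) to be a knot and \(T\) to have no closed components; then the universal coefficient theorem upgrades \(\dim_{\fieldTwoElements}=1\) to rank~\(1\) over \(\Z\), and Kronheimer--Mrowka applies. (Alternatively, choose the two Farey neighbours in the parity classes whose fillings are knots.) Relatedly, the bigraded comparison with \(Q_{\nicefrac{p}{q}}(\nicefrac{r}{s})\) that you single out as ``the main obstacle'' is both unjustified --- the hypothesis only pins down the component up to bigrading shift --- and unnecessary, since unknot detection only uses total rank; the real content you need to supply is the knot-versus-link and coefficient bookkeeping above, after which your route gives a legitimate proof, independent of whatever argument \cite{KWZthinness} uses.
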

	
	Rational components can also occur in the invariants of non-rational tangles. In fact, if \(T\) has no closed component, we know that there is always at least one such component \cite[Corollary~6.42]{KWZthinness}.  For example, the curve $\Khr(P_{2,-3})$ from Figure~\ref{fig:Kh:example:Curve:Downstairs} consists of the special component $\s_4(0)$ and the rational component $\r_1(\nicefrac{1}{2})$.
	Such rational components detect how tangle ends are connected \cite[Theorem~6.41]{KWZthinness}:
	
	\begin{theorem}\label{thm:detection:connectivity}
		Suppose a pointed Conway tangle \(T\) has connectivity \(\No\). Then the slope \(\nicefrac{p}{q}\in\QPI\) of any odd-length rational component of \(\Khr(T)\) satisfies \(p\equiv0\mod 2\). 
	\end{theorem}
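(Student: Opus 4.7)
The plan is to combine the gluing theorem (Theorem~\ref{thm:GlueingTheorem:Kh}) with the well-known parity relationship between the quantum grading of Khovanov homology and the number of components of a link.

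First, I would pin down the following combinatorial observation: because \(T\) has connectivity \(\No\), the number of components of the link \(T(\nicefrac{p}{q})=Q_{\nicefrac{-p}{q}}\cup T\) is completely determined by the parity of \(p\). Indeed, the two horizontal arcs coming from \(T\) are paired with the endpoints of \(Q_{\nicefrac{-p}{q}}\), and counting how these strands join up shows that \(T(\nicefrac{p}{q})\) is a knot when \(p\) is one parity and a two-component link in the other (the precise convention can be read off from Figures~\ref{fig:rat:oxi}--\ref{fig:rat:pairing}).

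Second, recall the standard fact that for any link \(L\) the quantum grading of every generator of \(\Kh(L)\) has the same parity, equal to \(c(L)+1 \pmod 2\) where \(c(L)\) is the number of components. Consequently the parity of the quantum gradings supported by \(\Kh(T(\nicefrac{p}{q}))\) is controlled by \(p \bmod 2\); the same is true of \(\Khr(T(\nicefrac{p}{q}))\) up to a global grading shift fixed by the orientation of \(T\).

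Third, apply Theorem~\ref{thm:GlueingTheorem:Kh}: \(\Khr(T(\nicefrac{p}{q}))\) decomposes as a direct sum of Lagrangian Floer homologies between \(\mirror(\rKh_1(\nicefrac{p}{q}))\) and the components of \(\BNr(T)\). For a rational component \(\rKh_n(\nicefrac{p'}{q'})\), the intersections with \(\mirror(\rKh_1(\nicefrac{p}{q}))\) are computed by lifting both curves to the planar cover \(\PuncturedPlane\) and counting intersections of straight lines of slope \(\nicefrac{p}{q}\) with lifts of \(\rKh_n(\nicefrac{p'}{q'})\). Each intersection carries a bigrading that can be extracted from the local combinatorics as described in Section~\ref{subsec:background:bigrading}. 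The key computation is to show that when \(n\) is odd, the parity of the quantum grading contributed by such an intersection depends on \(p'\bmod 2\) in a nontrivial way, whereas when \(n\) is even the contributions come in canceling pairs that render the parity independent of \(p'\).

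Finally, combining Steps~2 and~3 for a cleverly chosen filling \(\nicefrac{p}{q}\) (for instance a slope with \(q\) odd, so that the number of intersections is controlled) yields a parity equation that forces \(p'\equiv 0 \pmod 2\) for every odd-length rational component. The main obstacle is precisely this bigrading bookkeeping in Step~3: one needs a clean, convention-free formula for the parity of the quantum grading of an intersection point between \(\rKh_1(\nicefrac{p}{q})\) and \(\rKh_n(\nicefrac{p'}{q'})\), and a careful verification that the absolute grading shift from the orientation of \(T\) cancels out correctly. Once this is in place the conclusion follows immediately by comparing parities in the decomposition of \(\Khr(T(\nicefrac{p}{q}))\) with the universal parity imposed in Step~2.
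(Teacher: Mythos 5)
A preliminary remark: the paper does not prove Theorem~\ref{thm:detection:connectivity} at all --- it imports it from \cite[Theorem~6.41]{KWZthinness} --- so your proposal has to be judged on its own terms, and on those terms it has two genuine gaps. The first is structural. Theorem~\ref{thm:GlueingTheorem:Kh} computes \(\Khr(T(\nicefrac{p}{q}))\) either as \(\HF\bigl(\mirror(\BNr(Q_{-\nicefrac{p}{q}})),\Khr(T)\bigr)\) or as \(\HF\bigl(\mirror(\Khr(Q_{-\nicefrac{p}{q}})),\BNr(T)\bigr)\). Since the statement to be proved constrains the components of \(\Khr(T)\), you must use the first form, pairing the arc \(\BNr(Q_{-\nicefrac{p}{q}})\) against the components of \(\Khr(T)\). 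Instead, Step~3 pairs \(\mirror(\rKh_1(\nicefrac{p}{q}))\) against the components of \(\BNr(T)\); that decomposition is of no use here, because components of \(\BNr(T)\) are in general not of the form \(\rKh_n\) or \(\sKh_{2n}\) (see the remark closing Section~\ref{subsec:background:geography}), and parity information about them does not translate into a statement about the components of \(\Khr(T)\).

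The second gap is the one you flag yourself, and it is not routine bookkeeping: it is the entire content of the proof. The usable shape of the argument is that all generators of \(\Khr(T(\nicefrac{p}{q}))\) share one quantum-grading parity (determined, via the connectivity \(\No\), by \(p\bmod 2\)), and since the pairing splits as a direct sum over components of \(\Khr(T)\), each summand \(\HF(\a_{-\nicefrac{p}{q}},\gamma)\) inherits that uniform parity; so one must exhibit, for an odd-length component \(\gamma=\rKh_n(\nicefrac{p'}{q'})\) with \(p'\) odd, a filling slope for which two intersection points have quantum gradings of opposite parity. Your proposed mechanism (``for \(n\) even the contributions come in canceling pairs'') is not of this shape --- parities do not cancel, and nothing at all needs to be verified for even-length components --- and the computation you defer requires the relative bigrading along \(\rKh_n(\nicefrac{p'}{q'})\) (that is, the complex over \(\BNAlgH\) it represents) as well as along the arc, which is exactly the convention-sensitive data you have not pinned down. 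That this refinement is unavoidable can be seen by noting that the cruder numerical version of your plan provably fails: using Lemma~\ref{lem:pairing_linear_curves:dimension_formula} together with \(\dim_\F\Khr(L)\equiv\det(L)\bmod 2\), the parity constraints coming from all filling slopes reduce to three linear conditions mod~\(2\) on the numbers of odd-length rational components whose \((p',q')\) lie in the three coprime parity classes, and these conditions admit solutions containing components with \(p'\) odd. So until the per-intersection-point grading parity is actually computed and an opposite-parity pair is produced, the proposal does not prove the theorem.
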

	
	\begin{remark}
		Components of the invariant $\BNr(T)$, even compact ones, can be much more complicated than components of \(\Khr(T)\); for an example, see \cite[Figure~26]{KWZthinness}. However, the invariants of rational tangles are very simple: Any lift of \(\BNr(Q_{\nicefrac{p}{q}})\) to \(\PuncturedPlane\) is a straight line segment of slope \(\nicefrac{p}{q}\) connecting the lifts of two non-special punctures. 
	\end{remark}

\subsection{A dimension formula}

\begin{definition}
	Given two slopes \(\nicefrac{p}{q},\nicefrac{p'}{q'}\in\QPI\), where \((p,q)\) and \((p',q')\) are pairs of mutually prime integers, define the distance between two slopes as
	\[
	\Delta(\nicefrac{p}{q},\nicefrac{p'}{q'}) 
	\coloneqq
	\left|
	\det
	\begin{bmatrix}
	q & q' \\ 
	p & p' 
	\end{bmatrix}
	\right|
	=
	|qp'-pq'|
	\]
\end{definition}

\begin{lemma}\label{lem:pairing_linear_curves:dimension_formula}
	Let \(s,r\in\QPI\) be two distinct slopes. 
	Let \(\a_{s}\coloneqq\BNr(Q_{s})\) and 
	let \(\gamma\) be a rational or special curve of length \(\ell\) and slope \(r\). Then 
	\[
	\dim\HF(\a_{s},\gamma)
	=
	\ell\cdot\Delta(s,r)
	\]
\end{lemma}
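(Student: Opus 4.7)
The plan is to use the mapping class group action on $\FourPuncturedSphereKh$ to reduce to an explicit intersection count in the planar cover $\PuncturedPlane$.

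First, the group $\Mod(\FourPuncturedSphereKh) \cong \PSL(2,\Z)$ acts on multicurves in $\FourPuncturedSphereKh$ by orientation-preserving diffeomorphisms fixing the special puncture, so in particular it preserves $\dim \HF$. Its action on $\QPI$ is the standard $\SL(2,\Z)$ representation, which preserves the distance $\Delta$ since $|\det|=1$. Moreover, $\a_s = \BNr(Q_s)$ depends only on its slope, so is carried to $\a_{s'}$ by the matrix sending $s$ to $s'$, and $\gamma$ is likewise carried to a rational or special curve of the same length and the transformed slope. Applying a matrix taking $r \to 0$, we reduce to the case where $\gamma \in \{\r_\ell(0), \s_\ell(0)\}$ and the new $s$ is some $\nicefrac{p}{q}$ in lowest terms with $p\neq 0$ (using $s\neq r$); then $\Delta(s,r) = |p|$, and it suffices to prove $\dim \HF(\a_{\nicefrac{p}{q}}, \gamma) = \ell \cdot |p|$.

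Next, I would pass to the cover $\PuncturedPlane$. By definition $\gamma$ admits a horizontal lift $\tilde\gamma$ (see Figure~\ref{fig:geography}): a segment between two non-special lifts in the rational case, or a thin loop around a single special lift in the special case. The arc $\a_{\nicefrac{p}{q}}$ lifts to a $\Z^2$-invariant family of parallel line segments of slope $\nicefrac{p}{q}$, each joining adjacent non-special lifts; this follows from the description of $\BNr(Q_\infty)$ as a vertical arc between non-special punctures (Example~\ref{exa:rational_tangles}) together with naturality of $\BNr$ under twisting \cite[Theorem~1.13]{KWZ}. Since $\a_{\nicefrac{p}{q}}$ is a non-compact arc while $\gamma$ is a compact loop, the two are not homotopic, so by \cite[Theorem~5.25]{KWZ} the dimension $\dim \HF(\a_{\nicefrac{p}{q}}, \gamma)$ equals the minimum transverse intersection number of these two multicurves in $\FourPuncturedSphereKh$, which in turn equals the number of transverse intersections of the single lift $\tilde\gamma$ with the full preimage of $\a_{\nicefrac{p}{q}}$ in $\PuncturedPlane$.

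Finally, a direct planar count finishes the argument. Lines of slope $\nicefrac{p}{q}$ through the lattice of punctures meet any horizontal line in an arithmetic progression of $x$-coordinates with period $1/|p|$, and each such line passes through at least one non-special lift (so every such line appears in the preimage of $\a_{\nicefrac{p}{q}}$). In the rational case, the horizontal segment $\tilde\gamma$ of length $\ell$ therefore meets $\ell|p|$ such lines transversely. In the special case, $\tilde\gamma$ is a thin horizontal loop of span $\ell/2$ around a special lift; each transversal slope-$\nicefrac{p}{q}$ line crosses the loop at two points (entry and exit), and these are essential because any bigon bounded by two such crossings encloses the special puncture, so the total is $2\cdot(\ell/2)\cdot|p|=\ell|p|$. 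In both cases the intersection count equals $\ell \cdot \Delta(s,r)$, as desired.

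The main obstacle I anticipate is bookkeeping rather than anything conceptual: specifying which sub-lattice of punctures in $\PuncturedPlane$ consists of lifts of non-special punctures, and normalizing the ``horizontal span'' of $\ts_\ell(0)$ against the convention for the ``length'' of a special curve. Once these are nailed down against the figures in Section~\ref{subsec:background:geography}, the linear-algebra count is immediate.
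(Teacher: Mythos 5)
Your proposal is correct and follows essentially the same route as the paper: use invariance of \(\dim\HF\) under the mapping class group action to reduce to the case \(r=0\), then count intersections in the planar cover \(\PuncturedPlane\) (the paper leaves this count to inspection, exactly the bookkeeping you flag). The only cosmetic difference is that you lift \(\gamma\) and take the full preimage of \(\a_s\), whereas the paper lifts \(\a_s\) and takes the preimage of \(\gamma\); by Remark~\ref{rem_idea} these counts agree, so nothing changes.
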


\begin{proof}
	Write \(s=\nicefrac{p}{q}\) and \(r=\nicefrac{p'}{q'}\) for pairs \((p,q)\) and \((p',q')\) of mutually prime integers. 
	Observe that \(\dim\HF(\a_{s},\gamma)\) stays invariant under changing the parametrization of the four-punctured sphere. 
	So let us apply the linear transformation corresponding to the matrix 
	\[
	\begin{bmatrix}
	n & -m\\
	p' & -q'
	\end{bmatrix}
	\]
	where \(n,m\in\Z\) are such that \(mp'-nq'=1\). 
	This transformation maps \(\gamma\) to a curve of slope \(0\) and \(\a_s\) to a curve of slope \(\tfrac{p'q-q'p}{nq-mp}\). The distance between these curves remains the same. 
	This shows that if the formula holds for the case \(r=0\), then it also holds in general. 
	So suppose \(r=0\). In this case, \(\Delta(s,r)=|p|\), so we need to see that 
		\[
	\dim\HF(\a_s,\gamma)
	=
	\ell\cdot|p|
	\]
	This can be easily checked in the covering space \(\PuncturedPlane\). 
	If \(\tilde{\a}_s\) is a lift of \(\a_s\), \(\dim\HF(\a_s,\gamma)\) is equal to the number of times that \(\tilde{\a}_s\) intersects the preimage of \(\gamma\), which, by inspection, is indeed equal to \(\ell\cdot|p|\). 
\end{proof}

\begin{remark}\label{rem_idea}
In the remainder of this paper we will frequently use the idea above without explicitly referencing it, namely that if two curves $\gamma_1,\gamma_2$ intersect minimally in $\FourPuncturedSphereKh$, then we can count these intersections by looking at intersections of the preimage of $\gamma_2$ and any lift of $\gamma_1$ in $\R^2 \smallsetminus \Z^2$.
\end{remark}
\section{\texorpdfstring{Splitness detection for \(\BNr\) and \(\Khr\)}{Split tangle detection for BNr and Khr}}\label{sec:split_tangle_detection}

\begin{figure}[b]
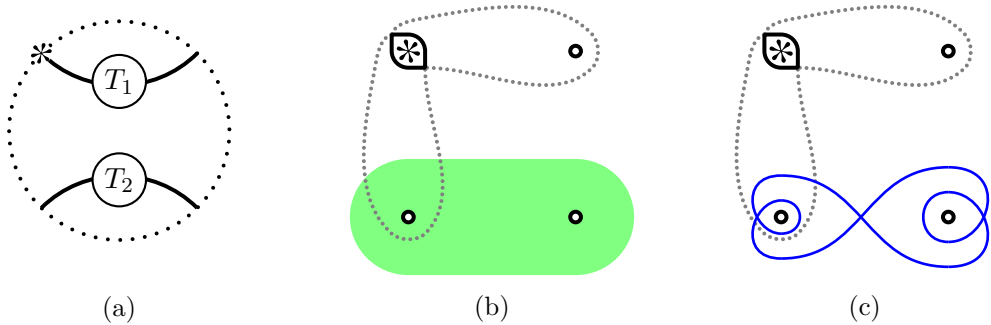

	\centering
	\begin{subfigure}{0.3\textwidth}
		\centering
		\(\splittangle\)
		\caption{}\label{fig:split:tangle}
	\end{subfigure}
	\begin{subfigure}{0.3\textwidth}
		\centering
		\(\slopeZeroNbh\)
		\caption{}\label{fig:split:slope}
	\end{subfigure}
	\begin{subfigure}{0.3\textwidth}
		\centering
		\(\KissingHearts\)
		\caption{}\label{fig:split:kissinghearts}
	\end{subfigure}
	\caption{(a) A split tangle and (b) the region in \(\FourPuncturedSphereKh\) supporting the multicurve invariants of any such tangle, according to Theorem~\ref{thm:split_tangle_detection}. Figure~(c) illustrates the generalized figure-eight curves \(\eight_n(0)\) for \(n=2\); note that \(\eight_1(0)=\rKh_1(0)\). }
	\label{fig:split}
\end{figure} 

Recall that a Conway tangle $T\subset B^3$ is \emph{split} if there exists an essential curve in $\partial B^3 \smallsetminus \partial T$ that bounds a disk in $B^3 \smallsetminus T$. If the slope of this curve is 0, we call the tangle \emph{horizontally split}. Equivalently, a tangle is horizontally split if it can be written as a disjoint union of two-ended tangles \(T_1\) and \(T_2\) as in Figure~\ref{fig:split:tangle}. 
In this section, we show that \(\Khr\) and \(\BNr\) detect this property:

\begin{theorem}
	\label{thm:split_tangle_detection}
	For any Conway tangle \(T\) the following conditions are equivalent: 
	\begin{enumerate}
		\myitem{(1)} \label{enu:split_detection:T} \(T\) is horizontally split;
		\myitem{(2a)} \label{enu:split_detection:shapesBNr}
	Up to some bigrading shift, each component of \(\BNr(T)\) is equal to the horizontal arc
	\(
	\a
	\hateqq
	[\DotB]
	\) 
	or a generalized figure-eight curve
	\(
	\eight_k(0)
	\hateqq
	\Big[
	\begin{tikzcd}[nodes={inner sep=2pt},column sep=14pt]
	\DotB
	\arrow{r}{H^k}
	&
	\DotB
	\end{tikzcd}
	\Big]
	\) 
	for some \(k>0\);
	\myitem{(2b)} \label{enu:split_detection:shapesKhr} Up to some bigrading shift, each component of \(\Khr(T)\) is equal to \(\rKh_1(0)\);
	\myitem{(3a)} \label{enu:split_detection:BNr} Up to homotopy, \(\BNr(T)\) is entirely contained in the shaded region in Figure~\ref{fig:split:slope};
	\myitem{(3b)} \label{enu:split_detection:Khr} Up to homotopy, \(\Khr(T)\) is entirely contained in the shaded region in Figure~\ref{fig:split:slope};
	\myitem{(4a)} \label{enu:split_detection:nodotsBNr}
		The complex \(\DD(T)\hateqq\BNr(T)\) contains no generator $\DotC$;		 
	\myitem{(4b)} \label{enu:split_detection:nodotsKhr}
		The complex \(\DD_1(T)\hateqq\Khr(T)\) contains no generator $\DotC$.
	\end{enumerate}
\end{theorem}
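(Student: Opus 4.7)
I would divide the seven conditions into three groups and prove them equivalent by implications within and between these groups. The block $(2a)$--$(4b)$ is purely algebraic-geometric and matches almost tautologically via the multicurve classification; the implication $(1)\Rightarrow(4a)$ is a short direct computation; and the main difficulty lies in the reverse implication from the algebraic block back to $(1)$.

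\textbf{Equivalences within (2)--(4) and $(1)\Rightarrow(4a)\Rightarrow(4b)$.} The pairs $(2a)\Leftrightarrow(4a)$ and $(2b)\Leftrightarrow(4b)$ follow from the classification of chain complexes over $\BNAlgH$ in \cite[Theorem~1.5]{KWZ}: a minimal complex whose generators all sit at $\DotB$ can only have differentials in the subalgebra generated by $D_{\bullet}$ and $S_{\circ}S_{\bullet}$, and its indecomposable summands (up to chain homotopy) are precisely a single $\DotB$ (giving the horizontal arc $\a$) or a two-generator loop $\DotB\to\DotB$ with morphism $H^k$ (giving $\eight_k(0)$). For $\Khr$, non-compact components are forbidden, so only the case $\eight_1(0)=\rKh_1(0)$ appears. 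Equivalence with $(3a)$ and $(3b)$ is a direct geometric inspection: up to homotopy, no other immersed curves in $\FourPuncturedSphereKh$ fit in the shaded region of Figure~\ref{fig:split:slope}. For $(1)\Rightarrow(4a)$, write $T=T_1\sqcup T_2$ and compute $\DD(T)$ via Bar-Natan's cube of resolutions. Every vertex of the cube is a planar tangle consisting of the two horizontal $\Lo$-arcs (equal to $\DotB$) together with disjoint closed circles inside each half-ball, and every edge cobordism is supported within a single half-ball, so it only contributes $D_{\bullet}$ or $S_{\circ}S_{\bullet}$ morphisms between $\DotB$-generators. The implication $(4a)\Rightarrow(4b)$ is immediate because $\DD_1(T)$ is the mapping cone of $H\cdot\id$ on $\DD(T)$.

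\textbf{The reverse direction (the main obstacle).} For the hardest step I would argue $(3b)\Rightarrow(1)$. By $(3b)$ every component of $\Khr(T)$ is a copy of $\rKh_1(0)$, so Theorem~\ref{thm:detection:connectivity}, together with a check on the two possible connectivity types, forces the underlying arcs of $T$ to pair top-to-top and bottom-to-bottom. Combining the gluing theorem (Theorem~\ref{thm:GlueingTheorem:Kh}) with the dimension formula (Lemma~\ref{lem:pairing_linear_curves:dimension_formula}) then shows that, for every rational filling, the Khovanov homology of $T(\nicefrac{p}{q})$ has the total dimension one would expect from a horizontally split tangle. My plan is to promote this numerical information into a genuine topological splitting in one of two ways: either invoke a splitness detection result for links applied to the specific filling $T(0)$ (which would then be a split link) and combine it with the connectivity information to recover a horizontal splitting disk in $B^3\smallsetminus T$; or directly realise the horizontal essential curve in $\FourPuncturedSphereKh$---which by $(3)$ is disjoint from both $\BNr(T)$ and $\Khr(T)$---as the boundary of an embedded splitting disk via a branched-cover argument analogous to the knot Floer version of Lidman, Moore, and Zibrowius~\cite{LMZ}. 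The technical challenge is bridging the combinatorial information about multicurves in $\FourPuncturedSphereKh$ to a concrete geometric splitting of $T$ inside $B^3$, which is where I expect most of the work to lie.
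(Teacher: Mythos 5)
There are genuine gaps. First, your claimed algebraic equivalence \ref{enu:split_detection:shapesBNr}\(\Leftrightarrow\)\ref{enu:split_detection:nodotsBNr} fails in the direction \ref{enu:split_detection:nodotsBNr}\(\Rightarrow\)\ref{enu:split_detection:shapesBNr}: the endomorphism algebra of \(\DotB\) in \(\BNAlgH\) is spanned by the powers of \(D_{\bullet}\) and of \(S_{\circ}S_{\bullet}\) \emph{separately}, not just by powers of \(H\), so a complex with no generator \(\DotC\) need not split into copies of \([\DotB]\) and \([\DotB \xrightarrow{H^k} \DotB]\) --- for instance \([\DotB \xrightarrow{D_{\bullet}} \DotB]\), geometrically a loop encircling a single puncture (possibly with a nontrivial local system), lives entirely in the shaded region. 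For \(\Khr\) such components can be excluded by the geography theorem (so \ref{enu:split_detection:nodotsKhr}\(\Rightarrow\)\ref{enu:split_detection:shapesKhr} is salvageable, though it is not ``tautological''), but the paper explicitly notes that no such classification exists for components of \(\BNr\); in the paper \ref{enu:split_detection:nodotsBNr}\(\Rightarrow\)\ref{enu:split_detection:shapesBNr} is only obtained by first proving \ref{enu:split_detection:nodotsBNr}\(\Rightarrow\)\ref{enu:split_detection:T} and then re-running the Bar-Natan argument (which, note, uses the \((4Tu)\)-relation over \(\fieldTwoElements\) to push all handles onto the basepointed component --- a step your cube-of-resolutions sketch omits and which is what upgrades ``no \(\DotC\)'' to the precise \(\F[H]\)-decomposition in \ref{enu:split_detection:shapesBNr}).

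Second, the hard implication back to \ref{enu:split_detection:T} is not carried out. The paper's proof runs through an external detection theorem: from \ref{enu:split_detection:nodotsBNr} one sees that the annular Khovanov homology of the annular closure \(T_a(\infty)\) is concentrated in annular grading zero, Xie's instanton-based theorem then places \(T_a(\infty)\) inside a ball in the solid torus, and a separate innermost-circle argument (Lemma~\ref{lem:sphere_in_solid_torus}) converts this into a horizontal splitting of \(T\). Your sketch contains neither this input nor a working substitute: (i) Theorem~\ref{thm:detection:connectivity} is quoted backwards --- it constrains slopes given the connectivity, not the connectivity given the slopes; (ii) total dimensions of \(\Khr(T(\nicefrac{p}{q}))\) cannot certify that \(T(0)\) is split; what would work (and is essentially what the paper does later, in Theorem~5.9) is the Lipshitz--Sarkar criterion via freeness of the basepoint action, which requires the module structure you have not set up; and (iii) even granting that \(T(0)\) is split, the step ``\(T(0)\) split \(\Rightarrow\) \(T\) horizontally split'' is a nontrivial three-manifold lemma of exactly the same nature as Lemma~\ref{lem:sphere_in_solid_torus}, and this is precisely the part you defer as ``where most of the work will lie''. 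So the plan identifies the right bottleneck but does not close it, and the one step you do claim in full (the algebraic \ref{enu:split_detection:nodotsBNr}\(\Rightarrow\)\ref{enu:split_detection:shapesBNr}) is incorrect as stated.
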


\begin{remark}
	There is an analogous detection result for the Heegaard Floer tangle invariant \(\HFT\) \cite[Theorem~4.1]{LMZ}. 
\end{remark}

By the naturality of \(\BNr\) and \(\Khr\) under twisting \cite[Theorem~1.13]{KWZ}, it follows that a tangle is split if and only if \(\Khr(T)\) consists of rational components of the same slope, or equivalently, if and only if \(\BNr(T)\) consists only of generalized figure-eight curves and arcs of the same slope. 

\begin{proof}
We start with the implication \(\ref{enu:split_detection:T}\Rightarrow\ref{enu:split_detection:shapesBNr}\). 
Bar-Natan associates with the two-ended tangles \(T_1\) and \(T_2\) the invariants \(\KhTl{T_1}\) and \(\KhTl{T_2}\), which are chain complexes over the cobordism category whose objects are crossingless two-ended tangles. Thanks to delooping \cite[Observation~4.18]{KWZ}, we can write these as complexes over the subcategory generated by the trivial tangle~\(\TrivialTwoTangle\). The morphisms in this subcategory can be represented by linear combinations of cobordisms without closed components, ie identity cobordisms with some number of handles attached. 
By Bar-Natan's gluing formalism, \(\KhTl{T}\) is then a tensor product of the complexes \(\KhTl{T_1}\) and \(\KhTl{T_2}\). In particular, the objects of \(\KhTl{T}\) are equal to \(\No\) and the differential consists of linear combinations of identity cobordisms with some number of handles attached to one of the components. Since we are working with coefficients in \(\fieldTwoElements\), the $(4Tu)$-relation \cite[Definition~4.3]{KWZ} allows us to move all handles on these cobordisms to the component containing the basepoint $\ast$ of \(\Lo\). Attaching a handle to the component with a basepoint corresponds to multiplying by $H$ \cite[Definition~4.10]{KWZ}, so \(\KhTl{T}\) is a chain complex over the graded algebra \(\fieldTwoElements[H]\). Therefore, up to homotopy, $\DD(T)$ is a direct sum of complexes of the form
\[
\left[
\begin{tikzcd}[nodes={inner sep=2pt},column sep=14pt]
\DotB
\end{tikzcd}\right]
\qquad\text{or}\qquad
\Big[
\begin{tikzcd}[nodes={inner sep=2pt},column sep=14pt]
\DotB
\arrow{r}{H^k}
&
\DotB
\end{tikzcd}
\Big]
\qquad\text{for some integer \(k>0\),}
\]
as required. 

The implication \(\ref{enu:split_detection:shapesBNr}\Rightarrow\ref{enu:split_detection:shapesKhr}\) follows from the definition of \(\Khr(T)\) as the curve corresponding to the mapping cone \(\DD_1(T)\) of the identity map on \(\DD(T)\) multiplied by \(H\). 
The equivalences $\ref{enu:split_detection:BNr}\Leftrightarrow \ref{enu:split_detection:nodotsBNr}$ and $\ref{enu:split_detection:Khr}\Leftrightarrow \ref{enu:split_detection:nodotsKhr}$ and the implications $\ref{enu:split_detection:shapesBNr} \Rightarrow \ref{enu:split_detection:BNr}$ and $\ref{enu:split_detection:shapesKhr} \Rightarrow \ref{enu:split_detection:Khr}$
are obvious.
The equivalence $\ref{enu:split_detection:nodotsBNr}\Leftrightarrow \ref{enu:split_detection:nodotsKhr}$ follows from the observation that any complex \(\DD(T)\) corresponding to a curve \(\BNr(T)\) contains a generator \(\DotC\) if and only if the same is true for its mapping cone \(\DD_1(T)\). 

The implication \(\ref{enu:split_detection:nodotsBNr}\Rightarrow\ref{enu:split_detection:T}\) remains. This direction relies on a detection result for annular Khovanov homology; this was established by Xie using annular instanton homology. 
We know that the complex $\DD(T)$ representing $\BNr(T)$ only contains generators $\DotB$. 
This is equivalent to saying that the tangle invariant $\KhTl{T}$, as a homotopy equivalence class of chain complexes over $\Cob_{/l}(\Lo\oplus \Li)$, has a representative containing only generators $\Lo$.
Let \(T_a(\infty)\) be the annular link shown in  Figure~\ref{fig:annular}. 
Its annular Khovanov homology $\AKh(T_a(\infty);\F)$ can be computed from $\KhTl{T}$ via gluing arguments similar to~\cite[Section~5]{BarNatanKhT}. It is concentrated in annular grading zero, because in that computation, every circle has winding number zero around the annulus. 
By the universal coefficient theorem, $\AKh(T_a(\infty);\C)$ is also concentrated in annular grading zero. We can now apply Xie's detection result~\cite[Corollary~1.6]{Xie} to deduce that the link $T_a(\infty)$ is contained in a three-ball embedded in the solid torus $S^1\times D^2$.
We conclude with Lemma~\ref{lem:sphere_in_solid_torus} below.
\end{proof}

\begin{figure}[t]
	\centering
	\labellist 
	\pinlabel $T$ at 15 45
	\pinlabel $E$ at 85 45
	\endlabellist
	\includegraphics[width=4.5cm]{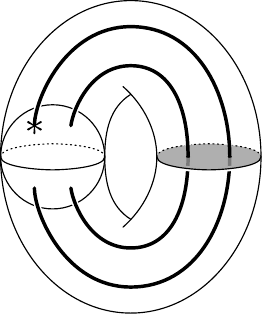}
	\caption{The annular link \(T_a(\infty)\) in $S^1\times D^2$. The shaded disk on the right shows the essential disk \(E\) used in the proof of Lemma~\ref{lem:sphere_in_solid_torus}.}
	\label{fig:annular}
\end{figure} 

\begin{lemma}\label{lem:sphere_in_solid_torus}
	If $T_a(\infty)$ is contained in a three-ball inside a solid torus, then $T$ must be horizontally split.
\end{lemma}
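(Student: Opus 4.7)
The plan is to translate the hypothesis into a horizontal splitting disk for $T$ inside the tangle ball. Since $T_a(\infty)\subset B'\subset V$ with $B'$ a three-ball and $V=S^1\times D^2$, the complement $V\setminus B'$ has the homotopy type of a solid torus and in particular contains a meridional disk $E$ of $V$; this $E$ is essential in $V$ and disjoint from $T_a(\infty)$, hence from the tangle $T\subset B_T\subset V$ inside its three-ball $B_T$.

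Next I would place $E$ in general position with $\partial B_T$ and isotope $E$ in $V\setminus T$ so as to minimize the components of $E\cap B_T$. Standard innermost-circle arguments on the disk $E$, valid because $E$ and the four-punctured sphere $\partial B_T\setminus \partial T$ are both planar and disjoint from $T$, reduce $E\cap B_T$ to a collection of properly embedded disks in $B_T\setminus T$ whose boundary circles lie on $\partial B_T\setminus \partial T$. If any such component has essential boundary, then it is a splitting disk for $T$; its slope agrees with the slope on $\partial B_T$ cut out by a meridian of $V$, which by the annular setup of Figure~\ref{fig:annular} (where the tangle is inserted so that the $\infty$-closure arcs wrap around $V$ in the direction transverse to meridians) is exactly $0$. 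Thus in this case $T$ is horizontally split.

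The harder step, which I expect to be the main obstacle, is ruling out the degenerate possibility that after simplification $E$ is disjoint from $B_T$ or that every component of $E\cap B_T$ is inessential. I would handle it by cutting $V$ along $E$ to obtain a three-ball $V_E\cong B^3$ containing $B_T$: the two closure arcs of $T_a(\infty)$, being disjoint from $E$, then lie in $V_E\setminus B_T\cong S^2\times I$ and, since $S^2\times I$ deformation retracts onto one end, can be isotoped rel endpoints onto $\partial B_T$. Combined with the hypothesis that the entire link $T_a(\infty)$ sits inside the three-ball $B'\subset V_E$, a further innermost-disk argument applied to the sphere $\partial B'$ meeting $\partial B_T$ transversely produces an essential disk in $B_T\setminus T$ of horizontal slope, contradicting the degeneracy assumption unless $T$ already decomposes as $T_1\sqcup T_2$ with $T_1$ containing the two "N" endpoints and $T_2$ the two "S" endpoints — i.e., $T$ is horizontally split.
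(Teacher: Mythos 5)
Your opening step is fine in substance, but not for the reason you give: $V\smallsetminus B'$ does not have the homotopy type of a solid torus (it has nontrivial $H_2$), and in any case a homotopy statement does not produce an embedded disk. What is true is that $\pi_1(V\smallsetminus B')\to\pi_1(V)\cong\Z$ is an isomorphism, so the meridian of $\partial V$ is null-homotopic in $V\smallsetminus B'$ and Dehn's lemma (alternatively, uniqueness of embedded $3$-balls up to ambient isotopy) yields a meridional disk $E$ disjoint from $T_a(\infty)$. The genuine gaps come after this. First, the slope: an essential component of $E\cap(\partial B_T\smallsetminus\partial T)$ bounding a disk in $B_T\smallsetminus T$ only shows that $T$ is split along \emph{some} slope. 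Being disjoint from the punctures and separating them in the same pattern as the horizontal curve does not pin the slope to $0$ — the slope-$2$ curve, for instance, separates the four punctures exactly as the slope-$0$ curve does — so your sentence ``its slope agrees with the slope on $\partial B_T$ cut out by a meridian of $V$'' asserts precisely the point at issue. The word ``horizontally'' is the crux of the lemma, and it has to be extracted from the position of the intersection curve relative to the two closure arcs; your argument never uses them. Second, your ``harder step'' does not hold up: the degenerate case you worry about (isotoping $E$ off $B_T$) in fact cannot occur at all, because a closure arc together with an arc in $B_T$ joining its endpoints is a core circle of $V$ in $\pi_1$, hence cannot lie in the ball obtained by cutting $V$ along an essential disk; meanwhile the argument you offer for that case is incorrect — properly embedded arcs in $S^2\times I$ with endpoints on one boundary sphere can be knotted and need not be isotopic rel endpoints into that sphere — and the closing appeal to ``a further innermost-disk argument \dots of horizontal slope'' simply restates the lemma. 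You also never deal with closed components of $T$, which are needed even to remove inessential intersection circles (the innermost sphere must bound a ball missing the link; the paper arranges this by assuming, without loss of generality, that $T$ has no unlinked closed components).

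For comparison, the paper keeps control of the slope by never leaving the standard picture: it takes the specific meridional disk $E$ of Figure~\ref{fig:annular}, which meets $T_a(\infty)$ in exactly two points on the closure arcs, and runs the innermost-circle argument on $E\cap S$ with $S=\partial B'$, producing a disk $D_S\cup(E\smallsetminus D_E)$ that is disjoint from the link and agrees with $E$ outside a subdisk containing both intersection points; it is this coincidence with the standard disk near $\partial E$ that makes the final isotopy exhibit a \emph{horizontal} splitting. If you want to complete your route, you need an additional argument relating your intersection curve to the closure arcs (or to the standard meridian picture); as written, your proof shows at best that $T$ is split, not horizontally split.
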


\begin{proof}
	Without loss of generality, we may assume that \(T\) has no unlinked closed components. 

	Let \(S\) be the boundary of the three-ball containing \(T_a(\infty)\) and let \(E\) be the essential disk in $S^1\times D^2$ shown in Figure~\ref{fig:annular}.
	Without loss of generality, we may assume that \(S\) intersects \(E\) transversely, so that \(E\cap S\) is a union of circles. 
	Clearly, \(E\cap S\neq\varnothing\) since \(S\) separates \(E\cap T_a(\infty) \neq\varnothing\) from \(\partial E\). 
	We now consider those circles as subsets of \(S\). One of them, let us call it \(C\), is innermost, so it bounds a disk \(D_S\) in \(S\) disjoint from any other circles. 
	The circle \(C\) also bounds a disk \(D_E\) in \(E\).
	
	Suppose \(D_E\) is disjoint from \(T_a(\infty)\). Then \(D_E\cup D_S\) is disjoint from \(T_a(\infty)\) and bounds a three-ball \(B\) in the solid torus. Since \(T\) has no unlinked closed component, \(B\) is disjoint from \(T_a(\infty)\). Therefore, there exists an isotopy of \(S\) that is the identity outside a small neighbourhood of \(B\) and inside this neighbourhood removes the circle \(C\) from \(E\cap S\neq\varnothing\) as well as any other component of \(E\cap S\neq\varnothing\) that lies in \(D_E\). 
	
	After repeating this procedure a finite number of times, we may assume that \(D_E\) intersects \(T_a(\infty)\) non-trivially. Since the sphere \(D_S\cup D_E\) intersects \(T_a(\infty)\) in an even number of points and \(D_S\) is disjoint from \(T_a(\infty)\), \(D_E\) must contains both intersection points of  \(E\) with \(T_a(\infty)\). So \(E\smallsetminus D_E\) is an annulus and \(D_S\cup (E\smallsetminus D_E)\) is an essential disk which, after isotopy, certifies that \(T\) is a split tangle. 
\end{proof}

\section{Proof of Theorem~\ref{thm:intro:cosmeticrational}}\label{sec:ecsc}

	The two-fold branched cover \(\Sigma(T)\) of $B^3$ branched over $T$ is the exterior of a knot \(K\subset S^3\).
	Suppose, without loss of generality, that \(T(\infty)\) is the unknot. 
 	Then, a curve of slope~\(\infty\) on the boundary of the three-ball containing~\(T\) lifts to a meridian of~\(K\). By adding the appropriate number of twists on the right of the tangle \(T\), we can further assume that \(T\) is  parametrized such that a curve of slope~\(0\) lifts to a longitude of~\(K\). Suppose now that \(T(r)\cong T(r')\) as unoriented links. Then 
	\[
		S^3_r(K)= \Sigma(T(r))\cong \Sigma(T(r'))=S^3_{r'}(K).
	\]
	According to \cite[Theorem~2]{HanselmanCSC}, this implies that \(r=-r'\) where
	\[
	r=\pm2 
	\text{ (case 1)} \quad \text{or} \quad
	r=\nicefrac{\pm1}{n} 
	\text{ for some positive integer }n \text{ (case 2)}
	\]
	Moreover, as a consequence of our chosen parametrization, the connectivity of the tangle~\(T\) is \(\No\). 
	This can be seen as follows: First, observe that \(T\) has no closed component and the connectivity of the tangle is not \(\Ni\). Both follow from the fact that \(T(\infty)\) is the unknot.  
	Secondly, if \(T(0)\) were a knot, its two-fold branched cover would be a rational homology sphere. 
	This contradicts the fact that the two-fold branched cover is 0-surgery on the knot \(K\subset S^3\). 
	So \(T(0)\) is a two-component link and hence the connectivity of \(T\) is not \(\Nio\). 
	
	The strategy for the proof is to compare the reduced Khovanov homologies \(\Khr(T(r))\) and \(\Khr(T(r'))\). 
	We first equip \(T(r)\) and \(T(r')\) with orientations such that they agree as oriented links. 
	Then \(\Khr(T(r))\) and \(\Khr(T(r'))\) agree as absolutely bigraded groups. 
	We work with coefficients in~\(\F\), so that reduced Khovanov homology is independent of the reduction point \cite[Corollary~3.2.C]{Shumakovitch}. 
	We will compute \(\Khr(T(r))\) and \(\Khr(T(r'))\) by pairing the $\BNr$-invariants of the rational tangle fillings (arcs) with the multicurve \(\textcolor{blue}{C}\coloneqq\Khr(T)\).
	A priori, the absolute bigrading on \(\textcolor{blue}{C}\) depends on the orientation of the tangle, but as we will see below, it is in fact orientation independent. 
	Since \(T(\infty)\) is the unknot, we know that \(\textcolor{blue}{C}\) has only one intersection with the vertical arc \(\a_\infty\coloneqq\BNr(\Li)\). 
	Special curves $\sKh_{2n}(s)$ intersect \(\a_\infty\) in more than one point, unless they have slope \(s=\infty\), in which case they are disjoint from \(\a_\infty\). 
	Similarly, a rational curve $\rKh_{n}(s)$ intersects \(\a_\infty\) in more than one point, unless $n=1$ and \(s\in \mathbb Z\), in which case there is a single intersection point. 
	Hence, we may write
	\[
	\textcolor{blue}{C}
	\coloneqq 
	\textcolor{blue}{\gamma_1}
	\cup \dots\cup
	\textcolor{blue}{\gamma_m}
	\cup
	\textcolor{blue}{\rho},
	\]
	where 
	\(
	\textcolor{blue}{\gamma_1},
	\dots,
	\textcolor{blue}{\gamma_m}
	\) 
	are special components of slope \(\infty\) and \(\textcolor{blue}{\rho}=\rKh_1(s)\), of slope \(s\in\mathbb Z\). Since \(T\) is non-rational, \(m>0\) by Theorem~\ref{thm:rational_tangle_detection}.
	Moreover, by Theorem~\ref{thm:detection:connectivity}, we know that \(s\) is an even integer. 
	
	We now consider the two cases separately. The arguments in both cases are essentially the same. We first show that the slope \(s\) of \(\textcolor{blue}{\rho}\) must be 0 for the total dimensions of \(\Khr(T(r))\) and \(\Khr(T(r'))\) to agree; then we compute the absolute quantum gradings and observe that they are different.

	\medskip\noindent {\bf Case 1: \(\{r,r'\}=\{\pm2\}\). } 
	Since the connectivity of the tangle \(T\) is \(\No\), \(T(+2)=T(-2)\) is a link with two components. 
	Consider their linking number. If we choose the same orientation of the tangle \(T\), the linking numbers of \(T(+2)\) and \(T(-2)\) are different, since the crossings in the \(\pm2\)-twist tangles then have different signs. 
	So up to an overall orientation reversal (which does not affect the reduced Khovanov homology), we may assume that the orientations on \(T(+2)\) and \(T(-2)\) are as follows:
	\[
	T(+2)=
	\Tpii
	\qquad
	T(-2)=
	\Tmii
	\]
	Since the crossings in the \(\pm2\)-twist tangles are all positive, the linking number of the tangle \(T\) with the orientation as in \(T(+2)\) is the same as with the orientation as in \(T(-2)\). (The linking number of a tangle is defined in \cite[Definition~4.7]{KWZ}.)  Since the two orientations are obtained by reversing one strand, these linking numbers also differ by a sign, so the linking number of $T$ is zero. Hence \(\textcolor{blue}{C}=\Khr(\textcolor{blue}{T})\) is independent of orientations; see for example~\cite[Proposition~4.8]{KWZ}.
	 
	Define two arcs
	\[
	\textcolor{red}{\a_+}\hateqq
	\left[
	\begin{tikzcd}[nodes={inner sep=2pt},column sep=14pt, red]
	\GGzqh{\DotCred}{}{-5}{}
	\arrow{r}{D}
	&
	\GGzqz{\DotCred}{}{-3}{}
	\arrow{r}{S}
	&
	\GGzqz{\DotBred}{}{-2}{}
	\end{tikzcd}\right]
	\qquad
	\text{and}
	\qquad
	\textcolor{red}{\a_-}\hateqq
	\left[
	\begin{tikzcd}[nodes={inner sep=2pt},column sep=14pt, red]
	\GGzqh{\DotBred}{}{-4}{}
	\arrow{r}{S}
	&
	\GGzqz{\DotCred}{}{-3}{}
	\arrow{r}{D}
	&
	\GGzqz{\DotCred}{}{-1}{}
	\end{tikzcd}\right]
	\]
	These are the arc invariants of the mirrors of the \(\pm2\)-twist tangles in \(T(+2)\) and \(T(-2)\), respectively. (For instance, this calculation follows from \cite[Example~4.27]{KWZ}, using (1) the relation $q=2(h+\delta)$ between the $\delta$-, homological, and quantum gradings in Khovanov theory, and (2) the formula from \cite[Proposition~4.8]{KWZ} for the grading shift induced by reversing the orientation of a tangle component.) 
	Then, by the pairing theorem,
	\[
	\Khr\Big(T(\pm2)\Big)
	\cong
	\HF(\textcolor{red}{\a_\pm},\textcolor{blue}{C})
	=
	\HF(\textcolor{red}{\a_\pm},\textcolor{blue}{\gamma_1})
	\oplus
	\dots
	\oplus
	\HF(\textcolor{red}{\a_\pm},\textcolor{blue}{\gamma_m})
	\oplus
	\HF(\textcolor{red}{\a_\pm},\textcolor{blue}{\rho})
	\]
	The total dimensions of the first \(m\) pairs of summands are identical. 
	By Lemma~\ref{lem:pairing_linear_curves:dimension_formula}, and  because a figure eight and an arc of the same slope intersect minimally in two points, the dimensions of the final summands are 
	\[\HF(\textcolor{red}{\a_\pm},\textcolor{blue}{\rho}) = 
	\begin{cases}
	3\mp1,& \text{ if } s=2; \\
	3\pm1,& \text{ if } s=-2; \\
	|s\mp 2|,& \text{ otherwise.}
	\end{cases}\]
	Our assumption that \(T(+2)\cong T(-2)\) implies that the dimensions of reduced Khovanov homology agree, so the slope \(s=0\). 
	
	We now consider the quantum gradings. 
	Recall that the grading on $\Khr(\Lo)$ is independent of the orientation on $\Lo$. 
	Since $T(\infty)$ is the unknot, the quantum gradings on $\textcolor{blue}{\rho}$ and $\Khr(\Lo)\hateqq\Big[\GGzqz{\DotB}{}{-1}{}\xrightarrow{H}\GGzqz{\DotB}{}{+1}{}\Big]$ agree.  
	Thus, \(\HF(\textcolor{red}{\a_+},\textcolor{blue}{\rho})\) and \(\HF(\textcolor{red}{\a_-},\textcolor{blue}{\rho})\) are graded isomorphic, because both are graded isomorphic to the reduced Khovanov homology of the same oriented Hopf link. 
	Moreover, for all \(i=1,\dots,m\), the quantum grading is shifted such that
	\[
	\HF(\textcolor{red}{\a_-},\textcolor{blue}{\gamma_i})
	\cong
	q^{+2}\HF(\textcolor{red}{\a_+},\textcolor{blue}{\gamma_i})
	\]
	which together with the previous observation contradicts \(\Khr(T(+2))\cong\Khr(T(-2))\).
	The grading shift for \(\HF(\textcolor{red}{\a_\pm},\textcolor{blue}{\gamma_i})\) can be seen as follows: After pulling the curves \(\textcolor{blue}{\gamma_i}\) sufficiently tight, their intersections with the arcs \(\textcolor{red}{\a_\pm}\) are all in a small neighbourhood of the intersection points of \(\textcolor{red}{\a_\pm}\) with the parametrizing arcs corresponding to the generators \(\DotBred\) as shown in Figure~\ref{fig:ecsc:case_i:specials} from the viewpoint of the planar cover of $\FourPuncturedSphereKh$. 
	The arcs \(\textcolor{red}{\a_\pm}\) are parallel in this region, so the grading difference is precisely the (negative of) the grading difference between these two generators. 
	
	\begin{figure}[b]
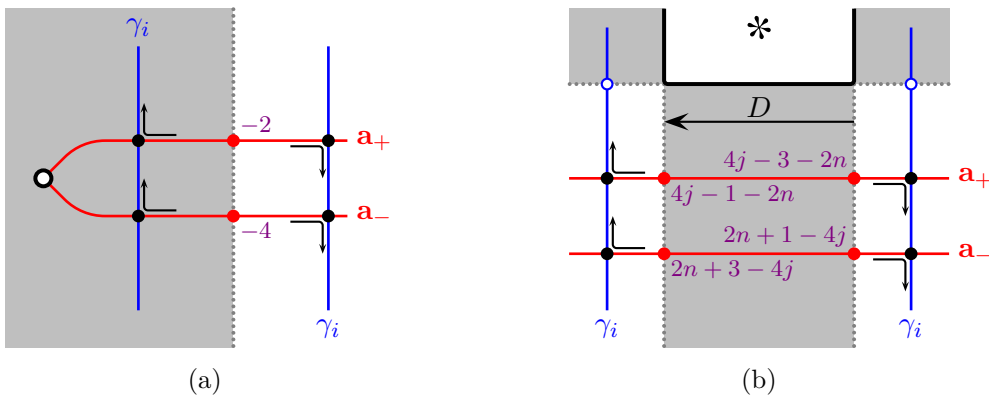

		\centering
		\begin{subfigure}[b]{0.45\textwidth}
			\centering
			\(\ECSCIspecials\)
			\caption{}\label{fig:ecsc:case_i:specials}
		\end{subfigure}
		\begin{subfigure}[b]{0.45\textwidth}
			\centering
			\(\ECSCIIspecials\)
			\caption{}\label{fig:ecsc:case_ii:specials}
		\end{subfigure}
		\caption{Canonical generators of \(\HF(\textcolor{red}{\a_\pm},\textcolor{blue}{\gamma_i})\) in the proof of Theorem~\ref{thm:intro:cosmeticrational}, (a)~Case~1 and (b)~Case~2. The arrows indicate the corresponding morphisms used for computing the gradings, as in Section~\ref{subsec:background:bigrading}. }
		\label{fig:ecsc}
	\end{figure} 
	
	\medskip\noindent {\bf Case 2: \(\{r,r'\}=\{\nicefrac{\pm1}{n}\}\). } 
	Since the connectivity of the tangle \(T\) is \(\No\), \(T(\nicefrac{1}{n})=T(\nicefrac{-1}{n})\) is a knot and up to overall orientation reversal, the orientations on \(T(\nicefrac{1}{n})\) and \(T(\nicefrac{-1}{n})\) are as follows:
	\[
	T(\nicefrac{1}{n})=
	\Tpn
	\qquad
	T(\nicefrac{-1}{n})=
	\Tmn
	\]
	Define
	\begin{align*}
	\textcolor{red}{\a_+}
	&\hateqq
	\left[
	\begin{tikzcd}[nodes={inner sep=2pt},column sep=14pt,ampersand replacement=\&, red]
	\GGzqh{\DotCred}{}{-2n}{}
	\arrow{r}{S}
	\&
	\GGzqh{\DotBred}{}{1-2n}{}
	\arrow{r}{D}
	\&
	\GGzqz{\DotBred}{}{3-2n}{}
	\arrow[dotted]{r}{S^2}
	\&
	\quad
	\arrow[dotted]{r}{S^2}
	\&
	\GGzqz{\DotBred}{}{4j-3-2n}{}
	\arrow{r}{D}
	\&
	\GGzqz{\DotBred}{}{4j-1-2n}{}
	\arrow[dotted]{r}{S^2}
	\&
	\quad
	\arrow[dotted]{r}{}
	\&
	\GGzqz{\DotBred}{}{-1}{}
	\end{tikzcd}\right]
	\text{ and}
	\\
	\textcolor{red}{\a_-}
	&\hateqq
	\left[
	\begin{tikzcd}[nodes={inner sep=2pt},column sep=14pt,ampersand replacement=\&, red]
	\GGzqz{\DotBred}{}{1}{}
	\arrow[dotted]{r}{}
	\&
	\quad
	\arrow[dotted]{r}{S^2}
	\&
	\GGzqz{\DotBred}{}{2n+1-4j}{}
	\arrow{r}{D}
	\&
	\GGzqz{\DotBred}{}{2n+3-4j}{}
	\arrow[dotted]{r}{S^2}
	\&
	\quad
	\arrow[dotted]{r}{S^2}
	\&
	\GGzqz{\DotBred}{}{2n-3}{}
	\arrow{r}{D}
	\&
	\GGzqz{\DotBred}{}{2n-1}{}
	\arrow{r}{S}
	\&
	\GGzqz{\DotCred}{}{2n}{}
	\end{tikzcd}\right]
	\end{align*}
	Note that \(j=1,\dots,\lfloor\tfrac{n}{2}\rfloor\). 
	The arcs \(\textcolor{red}{\a_+}\) and \(\textcolor{red}{\a_-}\) are the  \(\BNr\) invariants of the mirrors of the \(\nicefrac{\pm1}{n}\)-twist tangles in \(T(\nicefrac{1}{n})\) and \(T(\nicefrac{-1}{n})\), respectively; see \cite[Example~6.2, Proposition~4.8]{KWZ}. So as in Case~1, the pairing theorem allows us to write
	\[
	\Khr\Big(T(\nicefrac{\pm1}{n})\Big)
	\cong
	\HF(\textcolor{red}{\a_\pm},\textcolor{blue}{C})
	=
	\HF(\textcolor{red}{\a_\pm},\textcolor{blue}{\gamma_1})
	\oplus
	\dots
	\oplus
	\HF(\textcolor{red}{\a_\pm},\textcolor{blue}{\gamma_m})
	\oplus
	\HF(\textcolor{red}{\a_\pm},\textcolor{blue}{\rho})
	\]
	The total dimensions of the first \(m\) pairs of summands are identical, regardless of the slope \(s\) of the rational component \(\textcolor{blue}{\rho}\). 
	Since the slope \(s\) is an even integer, it never agrees with \(\nicefrac{\pm1}{n}\), so the dimensions of the final summands \(\HF(\textcolor{red}{\a_\pm},\textcolor{blue}{\rho})\) are equal to \(|1\mp sn|\) by Lemma~\ref{lem:pairing_linear_curves:dimension_formula} and hence only agree if \(s=0\).
	
	We now compute quantum gradings. First,  \(\HF(\textcolor{red}{\a_\pm},\textcolor{blue}{\rho})\) agree as absolutely bigraded homology groups, since they compute the reduced Khovanov homology of an unknot, shifted in quantum grading by the same amount. To compute the grading shifts for the first \(m\) summands, we observe that after pulling the curves \(\textcolor{blue}{\gamma_i}\) sufficiently tight (see \cite[Definition~6.1]{KWZthinness} and the discussion afterwards),  the intersection points between the arcs \(\textcolor{red}{\a_\pm}\) and \(\textcolor{blue}{\gamma_i}\) sit in a small neighbourhood of the vertical line through the special marked point. 
	If \(n\) is even, the relevant portions of the complexes \(\textcolor{red}{\a_+}\) and \(\textcolor{red}{\a_-}\) are, respectively,  
	\[
	\left(
	\begin{tikzcd}[nodes={inner sep=2pt},column sep=14pt,ampersand replacement=\&, red]
	\GGzqz{\DotBred}{}{4j-3-2n}{}
	\arrow{r}{D}
	\&
	\GGzqz{\DotBred}{}{4j-1-2n}{}
	\end{tikzcd}
	\right)
	\qquad\text{and}\qquad
	\left(
	\begin{tikzcd}[nodes={inner sep=2pt},column sep=14pt,ampersand replacement=\&, red]
	\GGzqz{\DotBred}{}{2n+1-4j}{}
	\arrow{r}{D}
	\&
	\GGzqz{\DotBred}{}{2n+3-4j}{}
	\end{tikzcd}
	\right)
	\]
	where \(j=1,\dots,\lfloor\tfrac{n}{2}\rfloor\). The corresponding curve segments are illustrated in Figure~\ref{fig:ecsc:case_ii:specials}. 
	They are obviously parallel, so there is a one-to-one correspondence between generators \(x_+\in \HF(\textcolor{red}{\a_+},\textcolor{blue}{\gamma_i})\) and generators \(x_-\in \HF(\textcolor{red}{\a_-},\textcolor{blue}{\gamma_i})\)
	such that the quantum gradings satisfy
	\[
	q(x_-)-q(x_+)=8j-4-4n<0
	\] 
	for \(j=1,\dots,\lfloor\tfrac{n}{2}\rfloor\). 
	If \(n\) is odd, there are additional generators stemming from the generators 	
	\(\GGzqz{\DotBred}{}{-1}{}\) and \(\GGzqz{\DotBred}{}{+1}{}\) of the complexes \(\textcolor{red}{\a_\pm}\). 
	The corresponding curve segments look as in Figure~\ref{fig:ecsc:case_i:specials}, except that the quantum gradings are different. The correspondence from the case that \(n\) is even extends to the case that \(n\) is odd so that the quantum gradings of the additional generators satisfy
	\[
	q(x_-)-q(x_+)=-2<0
	\]
	The grading shifts are strictly negative in all cases, contradicting \(\Khr(T(\nicefrac{1}{n}))\cong\Khr(T(\nicefrac{-1}{n}))\).\qed

\begin{remark}


This proof highlights the utility of the quantum gradings in Khovanov homology.  However, it also suggests an alternate strategy that avoids gradings in these complexes altogether through the exact triangle.  In general, by using the immersed curves reformulation of Khovanov homology, one is able to split up the skein exact triangle into several summands. There is one for each component of the immersed multicurve for the tangle complementary to the crossing where the exact triangle is being implemented. For each exact triangle, the dimensions of the three groups are simply computed by a count of intersections between the component and three rational curves in the four-punctured sphere of distance one. This gives much stronger constraints on the structure of the exact triangle. Frequently, as a result, the maps in the exact triangle can be computed as well, and additional grading structures can be deduced.  A similar perspective on these exact sequences is seen in bordered Heegaard Floer homology for three-manifolds with torus boundary \cite[Section 11.2]{LOT}.
\end{remark}

\section{The Generalized Cosmetic Crossing Conjecture holds asymptotically}\label{sec:agccc}

Throughout this section we fix a Conway tangle $T$ with connectivity $\No$ and without any closed components. Furthermore, we consider the family of knots \(\{K_n\}_{n\in\Z}\) shown in Figure~\ref{fig:Tpmiin} and defined by  $K_n=T(\nicefrac{1}{2n})$ for \(n\in\Z\).   
Equivalently, each knot \(K_n\) is the result of a band surgery on a fixed two-component link $T(0)$ and the knots \(K_n\) and \(K_{n+1}\) are obtained from each other by adding a full twist to the band. 
This point of view explains the restrictions placed on the tangle~\(T\).

\begin{figure}[b]
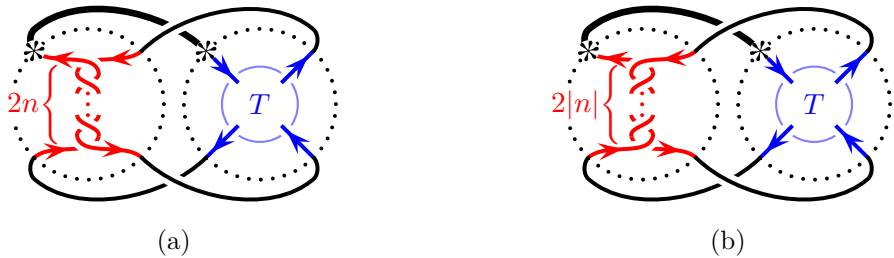

	\centering
	\begin{subfigure}[b]{0.45\textwidth}
		\centering
		\(\Tpiin\)
		\caption{}\label{fig:Tpiin}
	\end{subfigure}
	\begin{subfigure}[b]{0.45\textwidth}
		\centering
		\(\Tmiin\)
		\caption{}\label{fig:Tmiin}
	\end{subfigure}
	\caption{The knots $K_n$ for \(n\geq0\) (a) and \(n\leq0\) (b)}
	\label{fig:Tpmiin}
\end{figure} 

If $T$ is horizontally split, then all $K_n$ are equal to each other. We now restate the two conjectures on cosmetic crossings in terms of Conway tangles:

\begin{conjecture}[Cosmetic Crossing Conjecture]
	Suppose \(T\) is not horizontally split. 
	Then \(K_0\) and \(K_1\) are different as unoriented knots.
\end{conjecture}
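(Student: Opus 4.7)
The plan is to compute $\Khr(K_0)$ and $\Khr(K_1)$ using Theorem~\ref{thm:GlueingTheorem:Kh}, writing them as $\HF(\a_\infty, C)$ and $\HF(\a_{1/2}, C)$ respectively, where $C \coloneqq \Khr(T)$ and $\a_r \coloneqq \BNr(Q_r)$ (absorbing the $\mirror$ map into the parametrization). Decomposing $C$ into its components $\gamma_i$ via the classification recalled in Section~\ref{subsec:background:geography}, Theorem~\ref{thm:split_tangle_detection} guarantees that at least one $\gamma_i$ is not of the form $\rKh_1(0)$ up to bigrading shift. The goal is to leverage this non-trivial component to produce a bigraded discrepancy between the two pairings.

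The first reduction is a total-dimension count via Lemma~\ref{lem:pairing_linear_curves:dimension_formula}. A component of slope $p/q$ (in lowest terms) and length $\ell$ contributes $\ell|q|$ to $\dim\HF(\a_\infty, \gamma)$ and $\ell|2p-q|$ to $\dim\HF(\a_{1/2}, \gamma)$ whenever the slopes differ. Any component of slope $\infty$ contributes zero or a controlled number to the first pairing (for special or length-one rational curves, respectively) but strictly more to the second, yielding an immediate inequality. Otherwise, matching total dimensions forces $|q|=|2p-q|$ componentwise, hence slope $0$ or slope $1$ for every $\gamma_i$. Theorem~\ref{thm:detection:connectivity} combined with the $\No$-connectivity of $T$ then excludes odd-length rational components of slope $1$, so the surviving possibilities are rational components $\rKh_n(0)$, even-length rational components $\rKh_{2k}(1)$, and special components $\sKh_{2m}(0)$ or $\sKh_{2m}(1)$.

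Assuming the total dimensions do happen to match, the second step is a grading refinement modelled on Case~2 of the proof of Theorem~\ref{thm:intro:cosmeticrational}. After pulling each non-trivial $\gamma_i$ tight, its intersections with $\a_\infty$ and with $\a_{1/2}$ localize in parallel neighbourhoods in the planar cover $\PuncturedPlane$, so the quantum-grading difference between corresponding generators can be read off directly from the associated morphisms in $\BNAlgH$ using the grading formulas of Section~\ref{subsec:background:bigrading}. The aim is to show that the resulting shifts all have the same sign across every component that is not $\rKh_1(0)$, forbidding a bigraded isomorphism $\Khr(K_0) \cong \Khr(K_1)$ and contradicting $K_0 \cong K_1$.

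The main obstacle, which is why this statement remains open as a conjecture, lies in making the last step work uniformly. Unlike in Theorem~\ref{thm:intro:ecsc}, where Hanselman's Heegaard Floer obstruction restricted the possible slopes $r, r'$ to a short list and made a case-by-case grading analysis tractable, here the pair $(\infty, 1/2)$ is fixed but the bigrading on each component of $C$ is a priori unconstrained relative to the others. Consequently cancellations between contributions of different slope-$0$ and slope-$1$ components cannot be excluded from the multicurve data alone; in particular special components $\sKh_{2m}(0)$ and $\sKh_{2m}(1)$ produce precisely the kind of symmetric intersection patterns with $\a_\infty$ and $\a_{1/2}$ that frustrate a uniform sign for the grading shift. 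Overcoming this is likely to require either additional algebraic input constraining $\Khr(T)$ globally — for instance module-theoretic structure analogous to that used in bordered Heegaard Floer homology (cf.\ the remark at the end of Section~\ref{sec:ecsc}) — or a new geometric obstruction playing the role of Hanselman's theorem specifically tailored to the crossing-change setting.
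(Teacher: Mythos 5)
The statement you were given is the Cosmetic Crossing Conjecture itself, restated in tangle language; the paper presents it as an open conjecture and offers no proof, so there is no argument of the authors to compare yours against. You correctly recognize this, and your write-up is candid about being a strategy sketch plus a diagnosis of why it fails rather than a proof. Your diagnosis is essentially the right one: the multicurve pairing controls total dimensions and, component by component, relative grading shifts, but it gives no a priori control over the bigradings of distinct components of \(\Khr(T)\) relative to one another, so compensating contributions cannot be ruled out. The partial results the paper does obtain---Lemma~\ref{lem:only_zero_slopes_implies_GCCC} (all components of slope \(0\), at least one special), its combination with Theorem~\ref{thm:split_link_closure_detection} to recover Wang's theorem when \(T(0)\) is split, and the asymptotic statement Theorem~\ref{thm:main}---are precisely the situations in which the strategy you outline can be carried out, because there the grading shifts all point the same way.

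Two of your intermediate reductions are nonetheless incorrect as stated and worth flagging. First, equality of total dimensions only gives \(\sum_i\ell_i\,\Delta(\infty,\nicefrac{p_i}{q_i})=\sum_i\ell_i\,\Delta(\nicefrac{1}{2},\nicefrac{p_i}{q_i})\); it does not force \(|q_i|=|2p_i-q_i|\) for each \(i\), so you cannot conclude that every component has slope \(0\) or \(1\)---different components may over- and under-contribute in compensating ways, which is the same difficulty you later acknowledge at the level of gradings. Second, slope-\(\infty\) components need not produce an ``immediate inequality'': Lemma~\ref{lem:pairing_linear_curves:dimension_formula} does not apply when the slopes agree, and an arc and a rational curve of the same slope still intersect (a figure-eight and an arc of equal slope meet minimally in two points), so for instance \(\rKh_1(\infty)\) would contribute two to each of the two pairings; only the special components \(\sKh_{2m}(\infty)\), which are disjoint from \(\a_\infty\) but not from the slope-\(\nicefrac{1}{2}\) arc, behave as you claim. (For the tangles relevant here, Theorem~\ref{thm:detection:connectivity} and the \(\No\) connectivity at least rule out odd-length rational components of slope \(\infty\) and of slope \(1\), as you note.) Neither slip affects your correct bottom line that the argument does not close, but both would matter if you tried to promote the dimension count to an unconditional first step.
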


\begin{conjecture}[Generalized Cosmetic Crossing Conjecture]
	Suppose \(T\) is not horizontally split. 
	Then the unoriented knots \(\{K_n\}_{n\in \Z}\) are pairwise different.
\end{conjecture}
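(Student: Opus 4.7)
The plan is to generalize the strategy of Theorem~\ref{thm:intro:cosmeticrational} from the slopes $\{\pm 2, \nicefrac{\pm 1}{n}\}$ to the two-parameter family $\{\nicefrac{1}{2n}\}_{n \in \Z}$. Set $\a_s \coloneqq \BNr(Q_s)$, which is represented by a straight arc of slope $s$, as recalled at the end of Section~\ref{subsec:background:geography}. By the gluing theorem,
\[
\Khr(K_n) \;\cong\; \HF\bigl(\mr(\a_{\nicefrac{-1}{2n}}),\, \Khr(T)\bigr).
\]
Decompose $\Khr(T) = \gamma_1 \cup \cdots \cup \gamma_N$ into rational and special components, where $\gamma_i$ has slope $r_i = \nicefrac{p_i}{q_i}$ and length $\ell_i$. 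Since $T$ is not horizontally split, Theorem~\ref{thm:intro:split} guarantees that at least one $\gamma_i$ is not equal to $\r_1(0)$ up to bigrading shift. Assume for contradiction that $K_n \cong K_m$ for some $n \neq m$; I will aim to derive a contradiction for every such pair.

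Next I would carry out the dimension count. By Lemma~\ref{lem:pairing_linear_curves:dimension_formula},
\[
\dim \Khr(K_n) \;=\; \sum_{i=1}^N \ell_i \cdot \bigl|\,2n\, p_i - q_i\,\bigr|.
\]
This is piecewise-linear in $n$ with leading coefficient $2\sum_i \ell_i |p_i|$. If some component $\gamma_i$ has $p_i \neq 0$, equating $\dim \Khr(K_n) = \dim \Khr(K_m)$ confines the pair $(n,m)$ to a finite set determined by the slopes and lengths of the components of $\Khr(T)$; this alone reproves Theorem~\ref{thm:agccc} from the immersed-curve viewpoint. To upgrade this to all $n \neq m$, one must combine the dimension identity with the absolute bigrading, exactly as in the proof of Theorem~\ref{thm:intro:cosmeticrational}.

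For the bigraded comparison, I would generalize the Case~2 analysis of Section~\ref{sec:ecsc}. The complex representing $\a_{\nicefrac{-1}{2n}}$ is a long zigzag of the form seen there, whose generators carry quantum gradings that depend linearly on $n$. After pulling each $\gamma_i$ tight, intersections of $\a_{\nicefrac{-1}{2n}}$ with $\gamma_i$ cluster near the parametrizing arcs, as in Figures~\ref{fig:ecsc:case_i:specials} and~\ref{fig:ecsc:case_ii:specials}, and this yields a canonical bijection between generators of $\HF(\a_{\nicefrac{-1}{2n}},\gamma_i)$ and of $\HF(\a_{\nicefrac{-1}{2m}},\gamma_i)$. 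Adapting the grading computation of Case~2, the quantum-grading difference between paired generators is an explicit linear function of $n-m$ whose slope is determined by $p_i$. Whenever at least one $\gamma_i$ has $p_i \neq 0$, summing these shifts over $i$ produces a strictly nontrivial grading discrepancy, contradicting $\Khr(K_n) \cong \Khr(K_m)$ as bigraded vector spaces.

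The main obstacle, and the step I expect to be hardest, is the degenerate regime where every component of $\Khr(T)$ has slope $0$; here $p_i = 0$ for all $i$, so both the dimension function and the slope-dependent bigrading shifts above are constant in $n$. In this regime, the only non-$\r_1(0)$ components are $\r_\ell(0)$ with $\ell \geq 2$ or $\s_{2k}(0)$ with $k \geq 1$, and $T$ is ``almost horizontally split''. To attack this case I would proceed on two fronts. First, when $T(0)$ is a split link, Theorem~\ref{thm:josh} already gives pairwise distinctness of the $K_n$. Second, for the remaining tangles, I would refine the annular-Khovanov input of Theorem~\ref{thm:intro:split}: the point is that a non-split $T$ whose Khovanov multicurve lives entirely at slope $0$ should carry an annular structure, detected by Xie's theorem \cite{Xie}, that records the twist parameter $n$ in a way invisible to $\Khr(T)$ itself but visible in the annular grading of the closure $\Khr(K_n)$ after twisting. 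Establishing such an annular-grading rigidity in this degenerate case is the essential new input needed to complete the full Generalized Cosmetic Crossing Conjecture by the route described above.
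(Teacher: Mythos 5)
This statement is stated in the paper as a \emph{conjecture} and is left open there; the paper proves only the asymptotic version (Theorem~\ref{thm:agccc}/Theorem~\ref{thm:main}) and the special case where \(T(0)\) is a split link (Theorem~\ref{thm:josh}). Your proposal does not close the gap either, as you concede, but the more serious issue is that you have located the difficulty in the wrong place. The regime you call ``degenerate'' and ``hardest'' --- all components of \(\Khr(T)\) of slope \(0\) --- is exactly the case the paper resolves \emph{completely} (Lemma~\ref{lem:only_zero_slopes_implies_GCCC}). If every component has slope \(0\) and \(T\) is not horizontally split, Theorem~\ref{thm:split_tangle_detection} forces at least one \emph{special} component \(\sKh_{2k}(0)\). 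Your premise that the bigrading shifts are ``determined by \(p_i\)'' and hence constant in \(n\) when \(p_i=0\) fails precisely for these special components: after pulling tight, their intersections with \(\a_{\nicefrac{1}{2n}}\) cluster near the generator \(\DotC\) of the arc, whose quantum grading is \(-4n\), yielding \(\HF(\a_{\nicefrac{1}{2n}},\sKh_{2k}(0))\cong q^{4n-4m}\HF(\a_{\nicefrac{1}{2m}},\sKh_{2k}(0))\), while the rational summand has \(n\)-independent grading. This already gives \(K_n\not\cong K_m\) for all \(n\neq m\) in that regime; no annular-grading rigidity or appeal to Xie's theorem is needed, and your proposed ``essential new input'' is solving a case that is not actually open.

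Conversely, the case you treat as essentially routine --- some component with \(p_i\neq 0\) --- is where the full conjecture genuinely remains open. The dimension \(\sum_i \ell_i\,|2n p_i - q_i|\) is only \emph{eventually} monotone: for small \(n,m\) straddling the breakpoints \(\nicefrac{q_i}{2p_i}\) of the absolute values, coincidences \(\dim\Khr(K_n)=\dim\Khr(K_m)\) cannot be excluded. And your claim that ``summing these shifts over \(i\) produces a strictly nontrivial grading discrepancy'' is unsubstantiated: \(\Khr(K_n)\) is a direct sum over components, different components can shift their gradings in opposite directions, and nothing prevents the graded pieces from matching after reassembly. This is exactly why the paper's Lemma~\ref{lem:nonzero_slope_implies_limit_iinfinitty} only establishes strict monotonicity of the dimension on each tail \(n>M\) and \(n<-M\), and then separates the two tails by showing the quantum grading of \(\{\Khr(K_n)\}_{n\gg 0}\) is bounded below and unbounded above (and vice versa for \(n\ll 0\)) --- an argument that says nothing about small \(|n|\). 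So your outline reproves the two results the paper already contains, but the step needed to upgrade them to the full conjecture is missing, and the new idea you propose addresses the wrong case.
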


Recently, Wang showed that these conjectures hold assuming $T(0)$ is a split link \cite{Wang}. While he states these conjectures for oriented knots, we are not aware of any counterexamples to the conjectures as stated above for unoriented knots. Note, however, that a crossing change may result in the mirror of the original knot, as illustrated by the \((3,-3,\pm1)\)-pretzel knots, see the remarks to \cite[Problem~1.58]{Kirby}. 

Below we restate and prove Theorem~\ref{thm:agccc}, showing that the Generalized Cosmetic Crossing Conjecture holds ``asymptotically'', that is, for $n$ large enough:
\begin{theorem}\label{thm:main}
	Suppose \(T\) is not horizontally split. 
	Then there exists an integer \(N\) such that the knots \(\{K_n\}_{|n|\geq N}\) are pairwise different as unoriented knots. 
\end{theorem}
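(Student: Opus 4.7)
The plan is to adapt the pairing-and-grading strategy from the proof of Theorem~\ref{thm:intro:cosmeticrational}. By Theorem~\ref{thm:GlueingTheorem:Kh},
\[
\Khr(K_n)\cong\HF(\a_n,\textcolor{blue}{C}),\qquad
\a_n := \mirror(\BNr(Q_{\nicefrac{1}{2n}})),\quad \textcolor{blue}{C}:=\Khr(T),
\]
so that $\a_n$ is a straight arc of slope $\nicefrac{-1}{2n}$. Decompose $\textcolor{blue}{C}=\bigcup_i\textcolor{blue}{\gamma_i}$ into components, each of slope $s_i=\nicefrac{p_i}{q_i}$ and effective length $c_i$ (equal to $\ell_i$ for $\rKh_{\ell_i}(s_i)$ and to $2\ell_i$ for $\sKh_{2\ell_i}(s_i)$). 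Since $T$ is not horizontally split, Theorem~\ref{thm:split_tangle_detection} forces at least one $\textcolor{blue}{\gamma_i}$ to differ from $\rKh_1(0)$, and Theorem~\ref{thm:detection:connectivity} constrains the parities of the slopes.

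By Lemma~\ref{lem:pairing_linear_curves:dimension_formula}, once $|n|$ is large enough that $\nicefrac{-1}{2n}\neq s_i$ for all $i$,
\[
\dim\Khr(K_n)=\sum_i c_i\,|2np_i+q_i|.
\]
First treat the case in which some component has slope $s_i\neq 0$. Setting $A:=\sum_{p_i\neq 0}c_i|p_i|>0$, one obtains $\dim\Khr(K_n)=2|n|A+B_{\mathrm{sgn}(n)}$ for $|n|$ sufficiently large, where $B_{\pm}$ are fixed integers. Thus $\dim\Khr(K_n)$ is eventually strictly monotone in $|n|$ on each sign side, distinguishing $K_n$ from $K_m$ whenever $n,m$ share a sign and $|n|\neq|m|$ are sufficiently large. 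The only remaining cross-sign coincidences $\dim\Khr(K_n)=\dim\Khr(K_{-m})$ lie along a single arithmetic progression $n-m=(B_-{-}B_+)/(2A)\in\Z$, and these are ruled out by an absolute quantum-grading comparison modeled on Case~2 of the proof of Theorem~\ref{thm:intro:cosmeticrational}: along the canonical generators of $\HF(\a_n,\textcolor{blue}{\gamma_i})$ situated near the special puncture (cf.\ Figure~\ref{fig:ecsc:case_ii:specials}), the $q$-grading shift between the two sides is linear in $n$ with a coefficient sensitive to the sign of $n$, and the asymmetry cannot cancel across components.

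It remains to address the (a priori possible) situation where every component of $\textcolor{blue}{C}$ has slope $0$, so $\dim\Khr(K_n)$ is constant in $n$. By Theorem~\ref{thm:split_tangle_detection}, the non-splitness of $T$ forces some $\textcolor{blue}{\gamma_i}$ to have effective length at least $2$. In this case we fall back on the homological grading: the complex representing $\a_n$ is a zig-zag built from $n+1$ consecutive objects connected by differentials, so its $h$-grading support spans an interval of length comparable to $|n|$, and pairing with a slope-$0$, length-$\geq 2$ component inherits this growth. Consequently, the $h$-grading support of $\Khr(K_n)$ grows linearly in $|n|$, again with sign-sensitive shifts that separate the two sign sides. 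The main obstacle throughout is the absolute grading bookkeeping, particularly in the cross-sign comparison, where the dimension argument alone leaves an infinite arithmetic progression of ambiguous pairs that must be resolved by fine $q$- and $h$-grading data as in the proof of Theorem~\ref{thm:intro:cosmeticrational}.
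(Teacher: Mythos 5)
Your overall skeleton (pairing theorem, the dimension formula of Lemma~\ref{lem:pairing_linear_curves:dimension_formula}, and a case split according to the slopes of the components of \(\Khr(T)\)) is the same as the paper's, and your within-sign monotonicity argument in the nonzero-slope case matches the first half of Lemma~\ref{lem:nonzero_slope_implies_limit_iinfinitty}. However, both places where the real work happens are left as assertions or rest on an incorrect mechanism. In the nonzero-slope case, the cross-sign comparison is a genuine gap: when \(\dim\Khr(K_n)=\dim\Khr(K_{-m})\) along your arithmetic progression, the equal totals arise from \emph{different} component-wise dimensions \(\ell_i\,|q_i\mp 2np_i|\), so there is no canonical generator-by-generator correspondence as in Case~2 of the proof of Theorem~\ref{thm:intro:cosmeticrational}, and the claim that the ``asymmetry cannot cancel across components'' is exactly what needs proof. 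The paper avoids any such matching by a coarse boundedness argument: for \(n\gg0\) all generators of the arc have quantum grading at most \(-1\) and every intersection is realized by a morphism labelled by \(\id, S, S^2\) or \(D\), so the \(q\)-gradings of \(\Khr(K_n)\) are bounded below by a constant depending only on \(T\) while being unbounded above (an intersection near the special puncture has grading growing like \(4n\)); for \(n\ll0\) the situation is reversed, which separates the two families.

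In the all-slope-zero case your mechanism fails. The pairing of the arc of slope \(\nicefrac{1}{2n}\) with a \emph{rational} slope-\(0\) component of any length is independent of \(n\) in both dimension and bigrading (all intersection points sit at the end of the arc away from the special puncture), and the pairing with a \emph{special} slope-\(0\) component has bounded grading-width: it is translated by \(q^{4n}\), it does not spread out. So ``pairing with a slope-\(0\), length-\(\geq2\) component inherits this growth'' is false when that component is rational, and is not the right statement even when it is special; the width of the \(h\)- or \(q\)-support of a single summand never grows linearly. What actually distinguishes the knots (Lemma~\ref{lem:only_zero_slopes_implies_GCCC}) is the \emph{relative} shift between two summands that must both be nonempty: non-splitness together with Theorem~\ref{thm:split_tangle_detection} forces at least one special slope-\(0\) component (all-rational slope-\(0\) would mean \(T\) is split), non-vanishing of \(\Khr(T(0))\) forces a rational slope-\(0\) component, and then \(\HF(\a_{\nicefrac{1}{2n}},\Sigma)\cong q^{4n}\HF(\a_{\infty},\Sigma)\) while \(\HF(\a_{\nicefrac{1}{2n}},P)\) is \(n\)-independent. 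Your sketch establishes none of these three ingredients, so as written the slope-zero case is not proved.
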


\begin{lemma}\label{lem:only_zero_slopes_implies_GCCC}
	Suppose \(\Khr(T)\) only contains curves of slope 0 of which at least one is special. Then \(K_n\not\cong K_m\) for any \(n\neq m\). 
\end{lemma}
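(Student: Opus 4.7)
The plan is to leverage the pairing theorem (Theorem~\ref{thm:GlueingTheorem:Kh}) to write
\[
\Khr(K_n) \cong \HF\bigl(\a_n, \Khr(T)\bigr),
\qquad
\a_n \coloneqq \mirror\bigl(\BNr(Q_{-1/(2n)})\bigr),
\]
so that $\a_n$ is a single arc in $\FourPuncturedSphereKh$ of slope $\pm 1/(2n)$. Since every component of $\Khr(T)$ has slope $0$ by hypothesis and $\Delta(\pm 1/(2n), 0) = 1$, Lemma~\ref{lem:pairing_linear_curves:dimension_formula} gives $\dim\HF(\a_n, \gamma) = \ell$ for every length-$\ell$ component $\gamma$. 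Hence $\dim \Khr(K_n)$ does not depend on $n$, and the $K_n$ must be distinguished via the bigrading.

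My next step is to compute the quantum gradings of the intersection points in the planar cover $\PuncturedPlane$ (in the spirit of Remark~\ref{rem_idea}) and to extract a bigraded invariant that strictly depends on $n$. I would lift $\a_n$ to a straight segment of slope $\pm 1/(2n)$ and analyse each component of $\Khr(T)$ separately. For a rational component $\r_{\ell_i}(0)$, the $\ell_i$ intersection points with $\a_n$ can be pulled into a single horizontal strip of the cover, so their quantum-grading window has width bounded independently of $n$, with only an overall linear-in-$n$ shift from the twisting. For a special component $\s_{2m_j}(0)$, the special puncture breaks this symmetry: the $2m_j$ intersection points spread across the cover, and a direct calculation using the generator/morphism dictionary of Section~\ref{subsec:background:bigrading}, analogous to the bigrading computation $q(x_-) - q(x_+) = 8j-4-4n$ in Case~2 of the proof of Theorem~\ref{thm:intro:cosmeticrational}, shows that the maximum and minimum quantum gradings of this contribution both depend linearly on $n$, but with opposite slopes.

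Because $\Khr(T)$ contains at least one special component by assumption, the overall quantum-grading window of $\Khr(K_n)$ strictly widens with $|n|$, so $K_n \cong K_m$ forces $|n| = |m|$. Tracking the maximum and minimum quantum gradings separately (which vary linearly in the signed value of $n$ through the special contribution) then also rules out $K_n \cong K_{-n}$. Since $K_n$ and $K_m$ are knots, an unoriented isomorphism $K_n \cong K_m$ forces $\Khr(K_n) \cong \Khr(K_m)$ as bigraded vector spaces, yielding the desired contradiction.

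The main obstacle is the explicit bigrading computation for the intersection points between $\a_n$ and a special slope-$0$ component, and verifying that the linear-in-$n$ shift coming from the special puncture is strictly monotonic and cannot be cancelled by the bounded contributions of rational components. The argument closely parallels Case~2 of the proof of Theorem~\ref{thm:intro:cosmeticrational}, but with the simplification that here all components of $\Khr(T)$ lie on a single slope, so the pairing takes place in one horizontal strip of the cover and the relevant grading shifts are controlled purely by the geometry near the special puncture.
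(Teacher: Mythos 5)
Your reduction to bigradings via the pairing theorem and Lemma~\ref{lem:pairing_linear_curves:dimension_formula} matches the intended setup, but the grading computation on which your plan hinges---and which you defer as ``the main obstacle''---is asserted with the wrong outcome in both halves. After pulling the curves tight, the intersection points of $\a_{\nicefrac{1}{2n}}$ with the rational slope-$0$ components all localize near the end of the arc whose generator has quantum grading $-1$ (for $n>0$; $0$ for $n=0$, $+1$ for $n<0$), and the bigrading of that summand is \emph{independent} of $n$: there is no linear-in-$n$ shift coming from the twisting (cf.\ Figure~\ref{fig:agccc:zero:rationals}). The intersection points with the special slope-$0$ components localize at the opposite end of the arc, at the generator of quantum grading $-4n$, so that summand is shifted \emph{uniformly}: the special contribution to $\Khr(K_n)$ agrees with that to $\Khr(K_m)$ up to an overall shift $q^{4n-4m}$ (cf.\ Figure~\ref{fig:agccc:zero:specials}). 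In particular its maximal and minimal quantum gradings move with the \emph{same} slope in $n$, and the width of its support does not depend on $n$; there is no widening ``with opposite slopes''. The formula $q(x_-)-q(x_+)=8j-4-4n$ you invoke compares the pairings of two \emph{different} arcs $\a_\pm$ with the same special curve, so it does not indicate that a single pairing spreads out in grading.

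Because of this, your distinguishing mechanism does not prove the lemma. Writing $\Khr(K_n)\cong A\oplus q^{4n}B$, where $A$ (the rational contribution) and $B$ (the special contribution, nonzero by hypothesis) are bigraded spaces independent of $n$, the total quantum-grading window need not widen strictly with $|n|$: for small $n$ the shifted support of $B$ may lie entirely inside the support of $A$, so maxima, minima and widths can coincide for distinct $n$ and $m$. At best your argument would recover an asymptotic statement in the spirit of Lemma~\ref{lem:nonzero_slope_implies_limit_iinfinitty}, whereas this lemma must distinguish \emph{all} pairs $n\neq m$ (which is what the band-surgery application requires). The correct mechanism is precisely the relative behaviour you inverted: the rational summand is an $n$-independent anchor (it is nonzero because pairing with the horizontal arc computes $\Khr(T(0))\neq0$), while the special summand is translated by $q^{4n}$; since $B\neq0$, comparing the Poincar\'e polynomials of $A\oplus q^{4n}B$ and $A\oplus q^{4m}B$ forces $n=m$.
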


\begin{figure}[b]
	\centering
	\begin{subfigure}[b]{0.45\textwidth}
		\centering
		\(\AGCCCzeroRationals\)
		\caption{The canonical generators of
		\(\HF(\textcolor{red}{\a_{\nicefrac{1}{2n}}},\textcolor{blue}{P})\) shown in the planar cover of $\FourPuncturedSphereKh$}\label{fig:agccc:zero:rationals}
	\end{subfigure}
	\begin{subfigure}[b]{0.45\textwidth}
		\centering
		\(\AGCCCzeroSpecials\)
		\caption{The canonical generators of
		\(\HF(\textcolor{red}{\a_{\nicefrac{1}{2n}}},\textcolor{blue}{\Sigma})\) shown in the planar cover of $\FourPuncturedSphereKh$}\label{fig:agccc:zero:specials}
	\end{subfigure}
	\bigskip\\
	\begin{subfigure}[b]{0.38\textwidth}
		\centering
		\centering
		\labellist 
		\pinlabel $S$ at 27 79
		\pinlabel $D$ at 51 78
		\pinlabel $S$ at 49 61
		\pinlabel $D$ at 33 58
		\pinlabel $\textcolor{red}{\a_{\nicefrac{1}{2n}}}$ at 120 30
		\pinlabel \tiny $\textcolor{violet}{-4n}$ at 86 61
		\pinlabel \tiny $\textcolor{violet}{-1}$ at 19 33
		\endlabellist
		\includegraphics[scale=1.2]{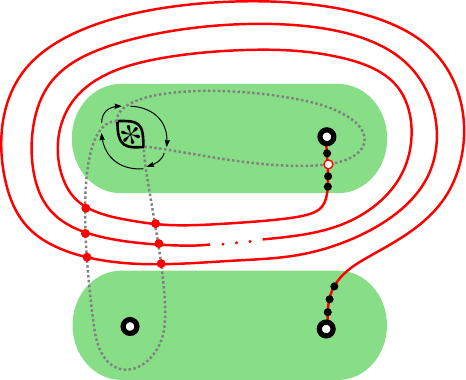}
		\caption{Schematic intersection picture for the curves $\textcolor{red}{\a_{\nicefrac{1}{2n}}}$,  $\textcolor{blue}{P}$, and $\textcolor{blue}{\Sigma}$ in $\FourPuncturedSphereKh$
	}\label{fig:agccc:zero:downstairs}
	\end{subfigure}
	\begin{subfigure}[b]{0.55\textwidth}
		\centering
		\(\AGCCCnoupperbound\)
		\caption{A generator with unbounded quantum grading of
			\(\HF(\textcolor{red}{\a_{\nicefrac{1}{2n}}},\textcolor{blue}{\gamma})\) shown in the planar cover of $\FourPuncturedSphereKh$}\label{fig:agccc:zero:noupperbound}
	\end{subfigure}
	\caption{Illustration of intersection points between various curves in the proofs of Lemmas~\ref{lem:only_zero_slopes_implies_GCCC} and~\ref{lem:nonzero_slope_implies_limit_iinfinitty}. Numbers near generators (intersection points) indicate their quantum gradings. 
	In (c), the dots $\bullet$ indicate the intersection points of the curve $\textcolor{red}{\a_{\nicefrac{1}{2n}}}$  with $\textcolor{blue}{P}$ (contained in the bottom region) and $\textcolor{blue}{\Sigma}$ (contained in the top region).
}
	\label{fig:agccc:zero}
\end{figure}

\begin{proof}
	Let us write \(\Khr(T)=\textcolor{blue}{\Sigma}\cup \textcolor{blue}{P}\), where \(\textcolor{blue}{\Sigma}\) consists of special components and \(\textcolor{blue}{P}\) of rational components. By assumption \(\textcolor{blue}{\Sigma}\neq\varnothing\). Also \(\textcolor{blue}{P}\neq\varnothing\), since the pairing of
	the arc \(\BNr(\Lo)\) with \(\Khr(T)\) computes the Khovanov homology of the link $T(0)$ and as such is non-zero. 
	Since the connectivity of the tangle \(T\) is \(\No\), the induced orientation of \(K_n\) on the tangle \(T\) and its rational filling are as shown in Figure~\ref{fig:Tpiin}, up to overall orientation reversal. 
	Define the arc of slope $\nicefrac{1}{2n}$:
	\[
	\textcolor{red}{\a_{\nicefrac{1}{2n}}}
	\hateqq
	\begin{cases*}
	\left[
	\begin{tikzcd}[nodes={inner sep=2pt},column sep=14pt,ampersand replacement=\&, red]
	\GGzqh{\DotCred}{}{-4n}{}
	\arrow{r}{S}
	\&
	\GGzqh{\DotBred}{}{1-4n}{}
	\arrow{r}{D}
	\&
	\GGzqz{\DotBred}{}{3-4n}{}
	\arrow[dotted]{r}{S^2}
	\&
	\quad
	\arrow[dotted]{r}{}
	\&
	\GGzqh{\DotBred}{}{-3}{}
	\arrow{r}{D}
	\&
	\GGzqz{\DotBred}{}{-1}{}
	\end{tikzcd}\right]
	& for \(n>0\)
	\\
	\left[
	\begin{tikzcd}[nodes={inner sep=2pt},column sep=14pt,ampersand replacement=\&, red]
	\GGzqz{\DotCred}{}{0}{}
	\end{tikzcd}\right]
	& for \(n=0\)
	\\
	\left[
	\begin{tikzcd}[nodes={inner sep=2pt},column sep=14pt,ampersand replacement=\&, red]
	\GGzqz{\DotBred}{}{1}{}
	\arrow{r}{D}
	\&
	\GGzqz{\DotBred}{}{3}{}
	\arrow[dotted]{r}{}
	\&
	\quad
	\arrow[dotted]{r}{S^2}
	\&
	\GGzqz{\DotBred}{}{-4n-3}{}
	\arrow{r}{D}
	\&
	\GGzqz{\DotBred}{}{-4n-1}{}
	\arrow{r}{S}
	\&
	\GGzqz{\DotCred}{}{-4n}{}
	\end{tikzcd}
	\right]
	& for \(n<0\)
	\end{cases*}
	\]
	By the same calculation as in the proof of Theorem~\ref{thm:intro:cosmeticrational}, Case 2 (using \cite[Example~6.2, Proposition~4.8]{KWZ}), these are the arc invariants of the mirrors of the rational fillings of \(T\).
	Then, by the pairing theorem, 
	\[
	\Khr(K_n)
	\cong
	\HF(\textcolor{red}{\a_{\nicefrac{1}{2n}}},\Khr(T))
	=
	\HF(\textcolor{red}{\a_{\nicefrac{1}{2n}}},\textcolor{blue}{\Sigma})
	\oplus
	\HF(\textcolor{red}{\a_{\nicefrac{1}{2n}}},\textcolor{blue}{P})
	\]
	We now study how both summands behave when varying \(n\);  Figure~\ref{fig:agccc:zero:downstairs} depicts the schematic picture of intersections in $\FourPuncturedSphereKh$. After pulling the multicurves \(\textcolor{blue}{P}\) and \(\textcolor{red}{\a_{\nicefrac{1}{2n}}}\) sufficiently tight, the intersection points generating \(\HF(\textcolor{red}{\a_{\nicefrac{1}{2n}}},\textcolor{blue}{P})\) all sit close to the end of \(\textcolor{red}{\a_{\nicefrac{1}{2n}}}\) corresponding to the generator \(\GGzqz{\DotBred}{}{-1}{0}\) for \(n>0\), \(\GGzqz{\DotCred}{}{0}{0}\) for \(n=0\), and \(\GGzqz{\DotBred}{}{+1}{0}\) for \(n<0\). 
	The corresponding curve segments are shown in Figure~\ref{fig:agccc:zero:rationals}. We see that the quantum grading of \(\HF(\textcolor{red}{\a_{\nicefrac{1}{2n}}},\textcolor{blue}{P})\) is independent of \(n\); see Section~\ref{subsec:background:bigrading} for how gradings are computed. 
	
	We now investigate the pairing of $\textcolor{red}{\a_{\nicefrac{1}{2n}}}$ with special curves. After pulling all multicurves sufficiently tight, the intersection points generating \(\HF(\textcolor{red}{\a_{\nicefrac{1}{2n}}},\textcolor{blue}{\Sigma})\) all sit close to the end of \(\textcolor{red}{\a_{\nicefrac{1}{2n}}}\) corresponding to the generator \(\GGzqz{\DotCred}{}{-4n}{0}\), see Figure~\ref{fig:agccc:zero:specials}. Thus, the shift in quantum grading is as follows: 
	\[
	\HF(\textcolor{red}{\a_{\nicefrac{1}{2n}}},\textcolor{blue}{\Sigma})
	\cong 
	q^{4n}
	\HF(\textcolor{red}{\a_{\infty}},\textcolor{blue}{\Sigma})
	\cong 
	q^{4n-4m}
	\HF(\textcolor{red}{\a_{\nicefrac{1}{2m}}},\textcolor{blue}{\Sigma})
	\]
	Therefore \(K_n\not\cong K_m\) if \(n\neq m\). 
\end{proof}

\begin{lemma}\label{lem:nonzero_slope_implies_limit_iinfinitty}
	Suppose \(\Khr(T)\) contains a curve of a non-zero slope. Then there exists $N$ such that the unoriented knots \(\{K_n\}_{|n|\geq N}\) are all different. 
\end{lemma}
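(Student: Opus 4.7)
The plan is to adapt the strategy of the proof of Lemma~\ref{lem:only_zero_slopes_implies_GCCC}, now exploiting that a nonzero-slope component of $\Khr(T)$ contributes an unbounded range of quantum gradings to $\Khr(K_n)$ as $|n|\to\infty$. By the pairing theorem,
\[
\Khr(K_n)\;\cong\;\HF\bigl(\a_{\nicefrac{1}{2n}},\Khr(T)\bigr)\;=\;\bigoplus_i \HF\bigl(\a_{\nicefrac{1}{2n}},\gamma_i\bigr),
\]
where $\gamma_i$ ranges over the components of $\Khr(T)$ and $\a_{\nicefrac{1}{2n}}$ denotes the arc invariant of the mirror of the rational filling used in the previous lemma.

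First I would fix a component $\gamma$ of $\Khr(T)$ of nonzero slope $\nicefrac{p}{q}$ with $p\neq 0$. By Lemma~\ref{lem:pairing_linear_curves:dimension_formula}, $\dim\HF(\a_{\nicefrac{1}{2n}},\gamma)=\ell\cdot|2np-q|$ grows linearly in $|n|$. Working in the planar cover $\PuncturedPlane$, a lift of $\a_{\nicefrac{1}{2n}}$ is a line of slope $\nicefrac{1}{2n}$ whose intersections with the preimage of $\gamma$ spread along the line, with consecutive intersection points connected by algebra elements of known quantum degree. A grading computation in the spirit of Section~\ref{subsec:background:bigrading} should then show that the extremal quantum gradings of $\HF(\a_{\nicefrac{1}{2n}},\gamma)$ grow linearly in $|n|$ in opposite directions; Figure~\ref{fig:agccc:zero:noupperbound} illustrates such an intersection point realising this unbounded grading. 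Meanwhile the other contributions to $\Khr(K_n)$ are comparatively tame: by \cite[Corollary~6.42]{KWZthinness} there is always a slope-$0$ rational component $\rho$, whose pairing with $\a_{\nicefrac{1}{2n}}$ is one-dimensional with a quantum grading independent of $n$ (exactly as in the analysis of $\textcolor{blue}{P}$ in Lemma~\ref{lem:only_zero_slopes_implies_GCCC}); slope-$0$ special components shift linearly in $n$ with bounded total dimension; and any further nonzero-slope components only reinforce the asymptotics.

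Finally I would take differences against the fixed grading $q(\rho)$ to obtain an orientation-invariant quantity $q_{\max}(\Khr(K_n))-q(\rho)$ that grows linearly and strictly monotonically for $n\geq N$, and similarly $q(\rho)-q_{\min}(\Khr(K_n))$ for $n\leq-N$, once $N$ is large enough to dominate all constant corrections from the finitely many other summands. The main obstacle is distinguishing $n>0$ from $n<0$, since the pure $q$-width grows symmetrically in $|n|$; this is resolved either by tracking both extremal deviations simultaneously, or by observing that any slope-$0$ special component shifts asymmetrically in $n$ exactly as in Lemma~\ref{lem:only_zero_slopes_implies_GCCC}, while the slope-$\nicefrac{p}{q}$ component moves the extremal gradings in a direction determined by the sign of $p\cdot n$. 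Combining these asymmetries should yield a function of $n$ that is injective for $|n|\geq N$, giving $K_n\not\cong K_m$ for $|n|,|m|\geq N$ and $n\neq m$.
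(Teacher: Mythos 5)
Your overall framework (pairing with $\textcolor{red}{\a_{\nicefrac{1}{2n}}}$, the dimension formula, asymptotic behaviour of quantum gradings) is the same as the paper's, but the two steps that actually do the work have genuine gaps. First, the cross-family step ($n\gg0$ versus $n\ll0$): your central claim that the extremal quantum gradings of $\HF(\textcolor{red}{\a_{\nicefrac{1}{2n}}},\textcolor{blue}{\gamma})$ grow linearly ``in opposite directions'' is false for a fixed sign of $n$, and the correct statement is precisely what you need. For $n>0$ every generator of $\textcolor{red}{\a_{\nicefrac{1}{2n}}}$ has quantum grading at most $-1$ and every intersection point is represented by a morphism whose label is one of $\id, S, S^2, D$ (grading $\geq -2$), so $\Khr(K_n)$ is bounded below \emph{uniformly in $n$}, while intersection points near the generator $\DotC$ of grading $-4n$ make it unbounded above; for $n<0$ the picture is mirrored (bounded above, unbounded below). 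This asymmetry is what separates the two families, and your proposed substitutes do not supply it: the direction in which the unbounded extreme moves is governed by the sign of $n$, not of $p\cdot n$, and you cannot invoke slope-$0$ special components or a slope-$0$ rational component $\rho$ here --- \cite[Corollary~6.42]{KWZthinness} guarantees a rational component but says nothing about its slope or length (slope $0$ for all components was the \emph{hypothesis} of Lemma~\ref{lem:only_zero_slopes_implies_GCCC}, not something available in this lemma, where no slope-$0$ component need exist at all). Relatedly, $q(\rho)$ is data of the pairing decomposition, not something readable from the abstract bigraded group $\Khr(K_n)$, so ``differences against $q(\rho)$'' is not a priori a knot invariant; since it would in any case be constant in $n$, this normalisation is also unnecessary, and orientation-invariance is automatic for knots.

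Second, within a fixed sign family you distinguish the knots by strict monotonicity of $q_{\max}(\Khr(K_n))$, which you assert but do not prove: it requires showing that the intersection pattern near the $\DotC$-end of $\textcolor{red}{\a_{\nicefrac{1}{2n}}}$ stabilises and shifts by exactly $4$ per added full twist, which is more delicate than the mere unboundedness that Figure~\ref{fig:agccc:zero:noupperbound} gives. The paper sidesteps this entirely by using total dimension: with slopes $\nicefrac{p_i}{q_i}$ and $M=\max\{\nicefrac{|q_i|}{2|p_i|}\colon p_i\neq0\}$, Lemma~\ref{lem:pairing_linear_curves:dimension_formula} gives $\dim\Khr(K_n)=\sum_i \ell_i\,|q_i-2np_i|$, which is strictly increasing for $n>M$ and strictly decreasing for $n<-M$ because the sign of each $\nicefrac{q_i}{p_i}-2n$ is then constant. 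You computed exactly this formula but never used it; combining it with the boundedness asymmetry above (to rule out $K_n\cong K_m$ with $n>N$, $m<-N$ for $N$ large) repairs your argument and is in fact the paper's proof.
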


\begin{proof}
	Let us write 	\(\Khr(T)=\textcolor{blue}{\gamma_1}\cup \cdots\cup\textcolor{blue}{\gamma_m}\)	for some integer \(m>0\). 
	For each \(i=1,\dots,m\), let \((p_i,q_i)\) be a pair of mutually prime integers such that the slope of \(\textcolor{blue}{\gamma_i}\) is \(\nicefrac{p_i}{q_i}\). By assumption, there is some \(i\in\{1,\dots,m\}\) such that \(p_i\neq0\). 
	Let 
	\[
	M=\max\left\{\left.\tfrac{|q_i|}{2|p_i|}\right| i\in\{1,\dots,m\}\co p_i\neq0\right\}
	\]
	Then for \(|n|> M\), the slope \(\nicefrac{1}{2n}\) of the curve \(\textcolor{red}{\a_{\nicefrac{1}{2n}}}\) is distinct from \(\nicefrac{p_i}{q_i}\). Therefore, by Lemma~\ref{lem:pairing_linear_curves:dimension_formula}, 
	\[
	\dim\HF(\textcolor{red}{\a_{\nicefrac{1}{2n}}},\textcolor{blue}{\gamma_i})
	=
	\ell_i\cdot
	|q_i-p_i\cdot 2n|
	\]
	where \(\ell_i\) is the length of \(\textcolor{blue}{\gamma_i}\). As we have seen in the proof of Lemma~\ref{lem:only_zero_slopes_implies_GCCC}, if \(p_i=0\), the dimension of \(\HF(\textcolor{red}{\a_{\nicefrac{1}{2n}}},\textcolor{blue}{\gamma_i})\) is independent of \(n\). If \(p_i\neq0\), the sign of the expression 
	\(\nicefrac{q_i}{p_i}-2n\)	is the same for all \(n>M\). 
	Therefore, \(\dim\Khr(K_n)\) is a strictly increasing function in \(n\) for $n>M$. The same argument shows that it is strictly decreasing for $n<-M$. 
	Thus the knots \(\{K_n\}_{n>M}\) are pairwise different, and so are the knots \(\{K_n\}_{n<-M}\).

	It remains to distinguish the two families. For this we first prove that the quantum grading of $\{\Khr(K_n)\}_{n\gg0}$ is unbounded above and bounded below. 
	
	The existence of a lower bound follows from the following two observations: First, the quantum gradings of the generators of $\textcolor{red}{\a_{\nicefrac{1}{2n}}}$ are bounded above by $-1$. Second, every intersection point generating the Lagrangian Floer homology between $\textcolor{red}{\a_{\nicefrac{1}{2n}}}$ and a rational or special curve can be represented by a homogeneous morphism containing a component labelled by an algebra element of quantum grading greater than or equal to $-2$ (namely one of the algebra elements \(\id,S,S^2, D\in\BNAlgH\)); this follows from an elementary argument about straight lines in the covering space $\FourPuncturedSphereKh$. Thus, if the minimal grading of a generator of $\DD_1(T)\hateqq\textcolor{blue}{\Khr(T)}$ is $\mu$, the formula from Section~\ref{subsec:background:bigrading} for computing the quantum grading of generators of $\HF(\textcolor{red}{\a_{\nicefrac{1}{2n}}},\textcolor{blue}{\Khr(T)})$ gives us $\mu-(-1)+(-2)$ as a lower bound.  
	
	Next, we show that the quantum grading of $\{\Khr(K_n)\}_{n\gg0}$ has no upper bound. 
	By assumption, there exists a component $\textcolor{blue}{\gamma}$ of $\textcolor{blue}{\Khr(T)}$ of non-zero slope. For $n\gg0$, we may assume that the slope of $\textcolor{blue}{\gamma}$ is bigger than the slope $\nicefrac{1}{2n}$ of the arc $\textcolor{red}{\a_{\nicefrac{1}{2n}}}$. Then, there exists an intersection point close to the generator $\GGzqz{\DotCred}{}{-4n}{}$, which looks like Figure~\ref{fig:agccc:zero:noupperbound}. Clearly, the quantum grading of this generator is unbounded. 

	Analogous arguments imply that the quantum grading of $\{\Khr(K_n)\}_{n\ll0}$ is unbounded below and bounded above. This proves that there exists $N\gg M$ such that knots in \(\{K_n\}_{n>N} \cup \{K_n\}_{n<-N}\) are pairwise different.
\end{proof}

\begin{proof}[Proof of Theorem~\ref{thm:main}]
We will show that the reduced Khovanov homology of the knots \(\{K_n\}_{|n|\geq N}\) are pairwise different by studying how the invariant \(\Khr(T)\) pairs with the arc \(\textcolor{red}{\a_{\nicefrac{1}{2n}}}\coloneqq\BNr(Q_{\nicefrac{1}{2n}})\). 
First, suppose \(\Khr(T)\) contains only rational components of slope $0$: In this case Theorem~\ref{thm:split_tangle_detection} implies that $T$ is horizontally split, contradicting the assumption in Theorem~\ref{thm:main}.  Next, suppose \(\Khr(T)\) only contains curves of slope 0 of which at least one is special: This case is covered by Lemma~\ref{lem:only_zero_slopes_implies_GCCC}.
The last case of \(\Khr(T)\) containing curves of non-zero slopes is covered by Lemma~\ref{lem:nonzero_slope_implies_limit_iinfinitty}.
\end{proof}

\subsection{Non-trivial band detection}

Joshua Wang asked if it is possible to recover~\cite[Theorems~1.1 and~1.8]{Wang} using our techniques:
\begin{theorem}[Split closure property]\label{thm:split_link_closure_detection}
	Suppose \(T(0)\) is a split link $K \cup K'$. Then \(\Khr(T)\) only contains components of slope \(0\).
\end{theorem}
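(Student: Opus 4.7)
The plan is to prove the contrapositive: assuming $\Khr(T)$ contains a component of non-zero slope, derive that $T(0)$ cannot be split. The starting point is the gluing theorem (Theorem~\ref{thm:GlueingTheorem:Kh}), which gives
\[
\Khr(T(0)) \cong \HF(\mathbf{a}_0, \Khr(T)),
\qquad
\mathbf{a}_0 \coloneqq \BNr(Q_0),
\]
and which decomposes as a direct sum over the components of $\Khr(T)$. On the other side, Shumakovitch's theorem over $\fieldTwoElements$ gives $\Kh(L) \cong \Khr(L) \otimes V$ with $V = \fieldTwoElements\langle v_+, v_-\rangle$ in quantum gradings $\pm 1$; combined with the K\"unneth formula $\Kh(L_1\sqcup L_2)\cong \Kh(L_1)\otimes \Kh(L_2)$, the hypothesis that $T(0)=K\sqcup K'$ is split yields
\[
\Khr(T(0))\cong \Khr(K)\otimes \Khr(K')\otimes V.
\]
In particular, $\Khr(T(0))$ carries a tensor $V$-factor, and its Poincar\'e polynomial is divisible by $q+q^{-1}$.

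Next, I would analyze the pairing contribution $\HF(\mathbf{a}_0, \gamma)$ for each type of component $\gamma$ of $\Khr(T)$ in the planar cover $\PuncturedPlane\to \FourPuncturedSphereKh$, following the computation style of Section~\ref{sec:agccc}. For slope-$0$ components (of the form $\rKh_n(0)$ or $\sKh_{2n}(0)$), a direct inspection of the intersections with $\mathbf{a}_0$ in the cover shows that the generators come in pairs of quantum gradings differing by $2$, so the contribution is divisible by $q+q^{-1}$ --- consistent with the $V$-factor (the basic case $n=1$ recovers the fact that $\HF(\mathbf{a}_0,\rKh_1(0))\cong \Khr(\text{two-component unlink})\cong V$). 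For a non-slope-$0$ component $\gamma$ of slope $\nicefrac{p}{q}$ with $p\neq 0$, Lemma~\ref{lem:pairing_linear_curves:dimension_formula} gives $\dim \HF(\mathbf{a}_0,\gamma)=\ell\cdot|p|$, and tracing the quantum gradings of these intersection points in the cover, as in the proof of Lemma~\ref{lem:nonzero_slope_implies_limit_iinfinitty}, shows they form a monotone, ``asymmetric'' progression that by itself cannot supply a $V$-factor.

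The main obstacle is the last step: while any single non-slope-$0$ component obstructs a local $V$-factor, it is conceivable \emph{a priori} that several non-slope-$0$ components combine across the decomposition so that the total still appears $(q+q^{-1})$-divisible. To rule this out I would use the full $V$-\emph{module} structure on $\Khr(T(0))$ rather than just its Poincar\'e polynomial: geometrically, the action of $X\in V$ corresponds to an endomorphism induced by a dotted cobordism attached at one component of $T(0)$, and this endomorphism can be interpreted directly on the pairing $\HF(\mathbf{a}_0,\Khr(T))$, where its interaction with non-slope-$0$ components cannot fit into a free $V$-summand. An alternative route, more in the spirit of the proof of Theorem~\ref{thm:split_tangle_detection}, would be to translate splitness of $T(0)$ into an annular statement in $S^1\times D^2$ and apply Xie's detection result to constrain $\AKh$, from which the slope-$0$ conclusion on $\Khr(T)$ would follow.
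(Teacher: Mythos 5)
Your overall framework is the right one, and the first half of your plan (Shumakovitch over $\fieldTwoElements$, the pairing decomposition $\Khr(T(0))\cong\HF(\a_0,\Khr(T))$, and the recognition that Poincar\'e-polynomial divisibility by $q+q^{-1}$ is too weak) matches the paper's strategy in spirit. But there is a genuine gap exactly at the step you flag as the ``main obstacle'': the assertion that the $X$-action (basepoint action) ``cannot fit into a free $V$-summand'' in the presence of a non-slope-$0$ component is the entire content of the theorem, and your plan does not actually prove it. Saying that the endomorphism ``can be interpreted directly on the pairing'' is not yet an argument; one needs to exhibit a concrete torsion class for the $\fieldTwoElements[x]/(x^2)$-action and verify both that its image under $x$ vanishes and that it is not itself in the image of $x$. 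Your alternative route via annular homology and Xie's theorem is also not a fix: in the paper that tool proves a different statement (that $\BNr(T)$ with only $\Lo$-type generators forces the \emph{tangle} to be split, Theorem~\ref{thm:split_tangle_detection}), and it is not clear how splitness of the \emph{closure} $T(0)$ would be fed into it; indeed the converse direction here is still open (Conjecture~\ref{conj:split_closure_detection}).

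For comparison, the paper closes this gap by an explicit morphism-space computation. Splitness of $T(0)=K\cup K'$ plus Shumakovitch's freeness of the basepoint action on $\Kh(K';\fieldTwoElements)$ gives that the $\fieldTwoElements[x]/(x^2)$-action on $\Khr(T(0))$ is free. If $\Khr(T)$ had a component $\gamma$ of nonzero slope, one first adds twists so that its slope is positive and very shallow; then, after pulling tight in the planar cover, each intersection of $\a_0$ with $\gamma$ corresponds to a morphism $(\DotB\xrightarrow{\id}\DotB)$ between the pulled-tight complexes. The basepoint action multiplies the labels by $D_{\smallDotB}$, and the resulting morphism $(\DotB\xrightarrow{D}\DotB)$ is null-homotopic, while the identity-labelled morphism cannot lie in the image of the action up to homotopy because pulled-tight (reduced) complexes contain no identity arrows in their differentials. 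Hence this class is $x$-torsion, contradicting freeness. Note also a small inaccuracy in your intermediate claim: for a single non-slope-$0$ component the graded contribution need not be visibly ``asymmetric'' in a way that obstructs $(q+q^{-1})$-divisibility, which is another reason the module-structure argument, carried out concretely as above, is what is actually needed.
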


\begin{proof}
	Placing the reduction point to the top-left end of $T$, we consider the reduced Khovanov homology of the split link \(\Khr(T(0))=\Khr(K\cup K')\). Since only one of the two components is reduced (say $K$), we may consider the basepoint action on \(\Khr(K\cup K')\) with respect to a basepoint on $K'$, which we place near the bottom left end of $T$. Keeping in mind that we work with $\F $ coefficients, we have
	\[\Khr(K\cup K')=\Khr(K) \otimes_{\F} \Kh(K') \]
	and the basepoint action  of $\F[x]/(x^2)$ on $\Khr(K\cup K')$ is induced by the basepoint action of $ \F[x]/(x^2)$ on $ \Kh(K')$. Over $\F $ the latter basepoint action on unreduced Khovanov homology is well-known to be free~\cite[Corollary~3.2.C]{Shumakovitch}, and so the basepoint action of \(\F[x]/(x^2)\) on \(\Khr(K\cup K')\) is also free.

	We now come back to the tangle $T$ and leverage the description of Khovanov homology in terms of the Floer homology of curves:
	$$\Khr(K\cup K')= \Khr(T(0))=\HF(\textcolor{red}{\a_0}, \textcolor{blue}{\Khr(T)})$$
	Let us suppose there is a curve $\textcolor{blue}{\gamma}$ in \(\textcolor{blue}{\Khr(T)}\) of slope $\nicefrac p q \neq 0$. By adding twists to the lower two punctures we may assume that $\nicefrac p q$ is positive  and as close to $0$ as we want. After pulling the curves tight in the planar cover, each intersection between $\textcolor{red}{\a_0}$ and $\textcolor{blue}{\gamma}$  locally looks as in Figure~\ref{fig:interpicture}. (Here we implicitly use the idea from Remark~\ref{rem_idea}.) 
	\begin{figure}[t]
	\centering
	\labellist 
	\pinlabel $D$ at 25 58
	\pinlabel $S$ at 57 45
	\pinlabel $S$ at 105 45
	\pinlabel $S$ at 208 105
	\pinlabel $D$ at 238 96
	\pinlabel $S$ at 167 105
	\pinlabel $D$ at 138 57
	\pinlabel \tiny$\textcolor{red}{0}$ at 154 79
	\pinlabel \tiny$\textcolor{blue}{1}$ at 9 47
	\pinlabel \tiny$\textcolor{blue}{2}$ at 40 44
	\pinlabel \tiny$\textcolor{blue}{3}$ at 122 71
	\pinlabel \tiny$\textcolor{blue}{4}$ at 154 68
	\pinlabel \tiny$\textcolor{blue}{5}$ at 222 92
	\pinlabel \tiny$\textcolor{blue}{6}$ at 254 89
	\endlabellist
	\includegraphics[scale=1.2]{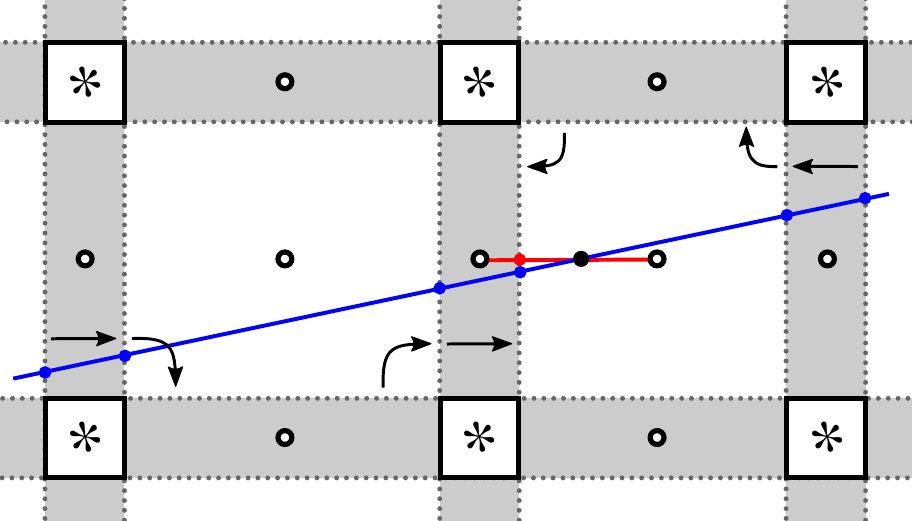}
	\caption{The lifts of the arc $\textcolor{red}{\a_0}$ and a rational or special curve $\gamma$ of sufficiently shallow positive slope, based at a common point of intersection }
	\label{fig:interpicture}
	\end{figure}

	Thus the complexes over the algebra $\mathcal B$ associated to the curves $\textcolor{red}{\a_0}$ and $\textcolor{blue}{\gamma}$ are as follows:
	\[
	\begin{aligned}
	\textcolor{red}{\DD_0} &= 
	\left[
	\begin{tikzcd}[nodes={inner sep=2pt},column sep=14pt,ampersand replacement=\&, red]
	\DotBred_0
	\end{tikzcd}
	\right]
	\hateqq \textcolor{red}{\a_0}
	\\
	\textcolor{blue}{\DD_1} &=
	\left[ 
	\begin{tikzcd}[nodes={inner sep=2pt},column sep=14pt,ampersand replacement=\&, blue]
	\cdots
	\arrow[r]
	\&
	\DotBblue_1
	\arrow{r}{D}
	\&
	\DotBblue_2
	\arrow{r}{S^2}
	\&
	\DotBblue_3
	\arrow{r}{D}
	\&
	\DotBblue_4
	\&
	\DotBblue_5
	\arrow[l,"S^2" above]
	\&
	\DotBblue_6
	\arrow[l,"D" above]
	\&
	\cdots
	\arrow[l]
	\end{tikzcd}
	\right]
	\hateqq \textcolor{blue}{\gamma}
	\end{aligned}
	\]
	where the subscripts are used to simply label generators. According to \cite[Theorem~1.5]{KWZ}, the Lagrangian Floer homology between two curves is isomorphic to the homology of the morphism space between the corresponding complexes:
	$$\HF(\textcolor{red}{\a_0},\textcolor{blue}{\gamma})\cong \Homology(\Mor(\textcolor{red}{\DD_0} ,\textcolor{blue}{\DD_1})) \subseteq \Khr(T(0))=\HF(\textcolor{red}{\a_0}, \textcolor{blue}{\Khr(T)})$$
	Consider now a morphism consisting of a single arrow $ (\textcolor{red}{\DotBred_0} \xrightarrow{\id} \textcolor{blue}{\DotBblue_4}) \in \Homology(\Mor(\textcolor{red}{\DD_0},\textcolor{blue}{\DD_1}))$. 
	(This morphism corresponds to the single intersection in Figure~\ref{fig:interpicture}.) The basepoint action multiplies all the labels of a morphism by $D_{\smallDotB}$; see the discussion before \cite[Lemma~6.46]{KWZthinness} for a detailed explanation of the basepoint action in the context of morphism spaces of chain complexes. Thus the basepoint action sends the morphism $(\textcolor{red}{\DotBred_0} \xrightarrow{\id} \textcolor{blue}{\DotBblue_4})$  to the morphism $(\textcolor{red}{\DotBred_0} \xrightarrow{D} \textcolor{blue}{\DotBblue_4})$, which is null-homotopic; the null-homotopy is $(\textcolor{red}{\DotBred_0} \xrightarrow{\id} \textcolor{blue}{\DotBblue_3})$. Furthermore, the morphism $(\textcolor{red}{\DotBred_0} \xrightarrow{\id} \textcolor{blue}{\DotBblue_4})$ is not in the image of the basepoint action, because every morphism that is homotopic to the one in the image of the basepoint action cannot contain identity arrows $\xrightarrow{\id}$. 
	This is because both $\textcolor{red}{\DD_0}$ and $\textcolor{blue}{\DD_1}$ correspond to the pulled tight curves, and thus do not contain any $\xrightarrow{\id}$ in their differentials. 
	We conclude that the morphism $(\textcolor{red}{\DotBred_0} \xrightarrow{\id} \textcolor{blue}{\DotBblue_4})$ represents torsion in the basepoint action of \(\F[x]/(x^2)\) on \( \Khr(T(0))\). So this action is not free, contradicting the fact that  \(T(0)\)  is split.
\end{proof}

\begin{theorem}
	Suppose $T$ is not horizontally split, and \(T(0)\) is a split link. Then the unoriented knots $\{K_n\}_{n\in \Z}$ are pairwise different.
\end{theorem}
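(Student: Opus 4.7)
The plan is to combine the detection results of this section with Lemma~\ref{lem:only_zero_slopes_implies_GCCC}. By Theorem~\ref{thm:split_link_closure_detection}, the assumption that $T(0)$ is split forces every component of $\Khr(T)$ to have slope $0$. Meanwhile, since $T$ is not horizontally split, condition (2b) of Theorem~\ref{thm:split_tangle_detection} fails, so $\Khr(T)$ must contain a component that, up to bigrading shift, is not equal to $\rKh_1(0)$. Together these two facts tell us that $\Khr(T)$ contains a slope-$0$ component which is either a special curve $\sKh_{2k}(0)$ or a rational curve $\rKh_\ell(0)$ with $\ell > 1$.

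The first step of the proof is to rule out the second possibility. Specifically, I would refine the basepoint-action argument from the proof of Theorem~\ref{thm:split_link_closure_detection}: if $\textcolor{blue}{\gamma}$ is a component of $\Khr(T)$ of the form $\rKh_\ell(0)$ with $\ell>1$, the geometry of $\textcolor{blue}{\gamma}$ in the planar cover (a horizontal multi-wrap around the bottom punctures) is still rich enough to yield an identity-labelled morphism from the complex associated to $\textcolor{red}{\a_0}$ to the complex associated to $\textcolor{blue}{\gamma}$ with the following properties: multiplication by $D_{\smallDotB}$ sends it to a null-homotopic morphism whose null-homotopy is again an identity-labelled morphism, yet it does not itself lie in the image of the basepoint action. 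This produces a torsion element in the basepoint action of $\F[x]/(x^2)$ on $\HF(\textcolor{red}{\a_0},\textcolor{blue}{\Khr(T)}) \cong \Khr(T(0))$, contradicting the freeness of this action that follows from $T(0)$ being split.

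Once long rational slope-$0$ components have been excluded, $\Khr(T)$ must contain at least one special component (and at least one rational component by~\cite[Corollary~6.42]{KWZthinness}). The hypotheses of Lemma~\ref{lem:only_zero_slopes_implies_GCCC} are therefore satisfied, and the lemma immediately yields $K_n\not\cong K_m$ for all $n\neq m$. The hard part will be the refinement of the basepoint-action argument: the curve $\rKh_\ell(0)$ has a fundamentally different geometric configuration from the zig-zag curve depicted in Figure~\ref{fig:interpicture}, so one must carefully identify a suitable middle generator of the associated complex in order to construct the desired morphism and its null-homotopy.
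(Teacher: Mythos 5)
Your overall skeleton (Theorem~\ref{thm:split_link_closure_detection} forces every component of $\Khr(T)$ to have slope $0$; non-horizontal-splitness of $T$ then forces a special component; Lemma~\ref{lem:only_zero_slopes_implies_GCCC} finishes) is the same as the paper's, but your middle step has a genuine gap. From the failure of condition~\ref{enu:split_detection:shapesKhr} of Theorem~\ref{thm:split_tangle_detection} you only learn that some component of $\Khr(T)$ is not $\rKh_1(0)$, and you must then exclude the possibility that the only such components are long rational curves $\rKh_\ell(0)$ with $\ell>1$. The argument you propose for this exclusion --- a ``refined'' basepoint-action computation producing torsion from such a component --- is precisely the part you leave as a sketch, and it cannot be treated as a routine adaptation of Figure~\ref{fig:interpicture}: there the curve has slope different from $\textcolor{red}{\a_0}$ and the identity-labelled generator comes from a transverse intersection of distinct slopes, whereas $\rKh_\ell(0)$ has the \emph{same} slope as $\textcolor{red}{\a_0}$, so the local picture and the homotopy bookkeeping are genuinely different. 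Nothing in the paper establishes this step either; indeed, ruling out long rational slope-$0$ components is essentially the content of the paper's final conjecture and of the missing direction of Conjecture~\ref{conj:split_closure_detection}, so as written your proof rests on an unproved (and possibly hard) claim.

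The paper avoids this issue entirely, in two ways. First, it applies Theorem~\ref{thm:split_tangle_detection} through conditions~\ref{enu:split_detection:Khr}/\ref{enu:split_detection:nodotsKhr} rather than through~\ref{enu:split_detection:shapesKhr}: rational curves of slope $0$ of \emph{any} length are supported in the shaded region of Figure~\ref{fig:split:slope}, equivalently contribute no generators $\DotC$, so if $\Khr(T)$ had no special component then all of its components would be rational of slope $0$ and $T$ would be horizontally split --- a contradiction (this is exactly how the first case in the proof of Theorem~\ref{thm:main} is handled). Hence a special slope-$0$ component exists with no discussion of lengths. Second, Lemma~\ref{lem:only_zero_slopes_implies_GCCC} does not require the rational part $\textcolor{blue}{P}$ to consist of curves of length $1$: its grading argument works for slope-$0$ rational components of arbitrary length. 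So the step you single out as ``the hard part'' is both unproved in your write-up and unnecessary; replace your use of condition~\ref{enu:split_detection:shapesKhr} by conditions~\ref{enu:split_detection:Khr}/\ref{enu:split_detection:nodotsKhr} and the argument closes without it.
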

\begin{proof}
	Theorem~\ref{thm:split_link_closure_detection} implies that \(\Khr(T)\) only contains components of slope 0. By Theorem~\ref{thm:split_tangle_detection} we know that the tangle~\(T\) must contain special components of slope 0. Lemma~\ref{lem:only_zero_slopes_implies_GCCC} now proves the statement.
\end{proof}

\subsection{Towards split closure detection}

It is natural to wonder if the converse of Theorem~\ref{thm:split_link_closure_detection} also holds.

\begin{conjecture}[Split closure detection]\label{conj:split_closure_detection}
	Given a Conway tangle \(T\), \(T(0)\) is a split link if and only if \(\Khr(T)\) only contains components of slope \(0\).
\end{conjecture}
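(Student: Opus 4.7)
The ``only if'' direction is Theorem~\ref{thm:split_link_closure_detection}; I focus on the converse. Suppose every component of \(\Khr(T)\) has slope \(0\). If no special component appears then condition \ref{enu:split_detection:shapesKhr} of Theorem~\ref{thm:split_tangle_detection} applies, \(T\) is itself horizontally split, and \(T(0)\) is immediately split. The substantive case is therefore when at least one special slope-0 component \(\sKh_{2n}(0)\) appears in \(\Khr(T)\).

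The plan is to compute \(\Khr(T(0)) = \HF(\a_0,\Khr(T))\) via the gluing theorem. Because \(\a_0\) and all components of \(\Khr(T)\) share slope \(0\), Lemma~\ref{lem:pairing_linear_curves:dimension_formula} does not apply; instead I would use the morphism-space description \(\HF \cong \Homology(\Mor(\DD_0,\DD))\) over \(\BNAlgH\), exactly as in the proof of Theorem~\ref{thm:split_link_closure_detection}. The next task is to show that the \(\F[x]/(x^2)\)-basepoint action on \(\Khr(T(0))\), induced by a basepoint on the reduced component, is free. The torsion constructed in the proof of Theorem~\ref{thm:split_link_closure_detection} arose from identity morphisms \((\DotBred_0 \xrightarrow{\id} \DotBblue)\) landing on a generator of a non-zero-slope component, whose image under \(D_\bullet\) was null-homotopic via an identity arrow present in the pulled-tight target complex. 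In the all-slope-0 setting the local geometry in the planar cover is parallel rather than transverse (contrast Figure~\ref{fig:interpicture}), and the analogous identity morphisms ought to already lie in the image of \(D_\bullet\) up to homotopy; this must be checked component-by-component, separately for \(\rKh_1(0)\) and for the more intricate \(\sKh_{2n}(0)\).

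The last step, and the main obstacle, is to promote freeness of the basepoint action to geometric splitness of \(T(0)\): a general Khovanov-module splitness-detection theorem is itself open. My preferred route is to mimic the annular instanton argument used in Theorem~\ref{thm:split_tangle_detection}, namely to translate the slope-0 hypothesis on \(\Khr(T)\) into concentration of \(\AKh(T(0);\C)\) in annular grading zero with respect to the annulus inherited from the crossing circle of the tangle decomposition, and then to invoke Xie's result~\cite[Corollary~1.6]{Xie} together with an adaptation of Lemma~\ref{lem:sphere_in_solid_torus} that produces an essential sphere in \(S^3\smallsetminus T(0)\) rather than in the solid torus containing the tangle. The delicate part is controlling the annular-grading contribution of each special component \(\sKh_{2n}(0)\): whereas rational slope-0 components are handled directly by Theorem~\ref{thm:split_tangle_detection}, the specials likely require a careful length-by-length analysis, perhaps by comparison with model tangles whose invariant consists of a single \(\sKh_{2n}(0)\).
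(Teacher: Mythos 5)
The statement you are proving is not actually proved in the paper: it is posed as an open conjecture, and the paper only records partial progress, namely that \(T(0)\) is split if \(\Khr(T)\) contains only slope-0 components \emph{and} no rational component of length greater than \(1\), together with a further conjecture (all rational components of \(\Khr(T;\F)\) have length \(1\)) that would complete the argument. Your proposal does not close this gap, and its first concrete step already fails: if no special component appears, you invoke condition \ref{enu:split_detection:shapesKhr} of Theorem~\ref{thm:split_tangle_detection}, but that condition requires every component to be \(\rKh_1(0)\), i.e.\ of length exactly \(1\). Slope-0 rational components \(\rKh_n(0)\) with \(n>1\) are not excluded by your hypothesis, and ruling them out is precisely the crux the paper isolates; moreover, this exclusion genuinely needs characteristic \(2\) (the paper notes the statement is false over other coefficients), so any argument must use \(\F\) in an essential way, which your sketch never does.

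Your assessment of where the difficulty lies is also inverted. Promoting freeness of the \(\F[x]/(x^2)\)-basepoint action on \(\Khr(T(0))\) to geometric splitness is \emph{not} open: this is exactly what the main result of \cite{LS} provides, and it is what the paper's partial theorem uses; your proposed detour through annular instanton homology and an adaptation of Lemma~\ref{lem:sphere_in_solid_torus} is unnecessary for that step (and, as set up for \(T(0)\subset S^3\) rather than for a tangle in a solid torus, not obviously applicable). Conversely, the step you defer to a ``component-by-component check'' --- freeness of the basepoint action in the all-slope-0 case --- is the genuinely hard part. Note also that special slope-0 components are disjoint from \(\a_0\) after homotopy and so contribute nothing to \(\HF(\a_0,\Khr(T))\); the problematic contributions come from \(\rKh_n(0)\) with \(n>1\), where freeness is exactly what one cannot currently establish. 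As written, the proposal therefore contains no argument for the open implication.
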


In this direction, we can offer the following result:

\begin{theorem}
	\(T(0)\) is a split link if \(\Khr(T)\) only contains components of slope 0 and no rational curves of length greater than 1.
\end{theorem}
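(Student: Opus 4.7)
The plan is to adapt the annular Khovanov detection argument used in the proof of Theorem~\ref{thm:split_tangle_detection} to the present broader setting, where slope-$0$ special components are allowed in $\Khr(T)$ in addition to length-$1$ rational components.

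First, I would translate the hypothesis on $\Khr(T)$ into a structural statement about $\BNr(T)$. Using the mapping cone relation $\DD_1(T) = \Cone(\DD(T) \xrightarrow{H \cdot \id} \DD(T))$ and the classification of components, every component of $\BNr(T)$ must have slope $0$: each $\rKh_1(0)$ in $\Khr(T)$ arises either from a horizontal arc $\a \hateqq [\DotB]$ or from a figure-eight curve $\eight_k(0) \hateqq [\DotB \xrightarrow{H^k} \DotB]$ in $\BNr(T)$ (direct computation of the cone of $H$ on each of these basic complexes indeed yields an $\rKh_1(0)$ component, essentially because $H$ is central), and each slope-$0$ special component in $\Khr(T)$ arises from a slope-$0$ special component in $\BNr(T)$ of possibly different length.

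Next, following the blueprint of the proof of Theorem~\ref{thm:split_tangle_detection}, I would consider a suitable annular closure. Let $c$ be a small unknot meridional to one of the two arcs of the $Q_0$-filling, and view $T(0)$ as an annular link $T(0)_a$ in the solid torus $S^3 \smallsetminus \nu(c)$. Its annular Khovanov homology $\AKh(T(0)_a; \F)$ can be computed from $\BNr(T)$ by a gluing argument in the style of \cite[Section~5]{BarNatanKhT}. Under our hypotheses, the slope-$0$ structure of $\BNr(T)$ combined with the length-$1$ restriction on rational components of $\Khr(T)$ should force $\AKh(T(0)_a; \F)$ to be concentrated in annular winding grading zero: slope-$0$ components lie horizontally with respect to $c$, and the length-$1$ hypothesis precludes the rational components from wrapping nontrivially around $c$. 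By the universal coefficient theorem and Xie's detection result \cite[Corollary~1.6]{Xie}, the link $T(0)_a$ then lies in a $3$-ball inside the solid torus, and an argument analogous to Lemma~\ref{lem:sphere_in_solid_torus}, applied to the two components of $T(0)$, produces an essential sphere in $S^3 \smallsetminus T(0)$ separating them, so $T(0)$ is split.

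The main obstacle is this middle step: carrying out the annular Khovanov computation from the multicurve data explicitly and verifying the winding-grading concentration. The length-$1$ hypothesis on rational components of $\Khr(T)$ is precisely the technical condition expected to be needed here to rule out higher-winding-grading contributions — a longer rational component of slope $0$ in $\Khr(T)$ could in principle correspond to a figure-eight-like structure in $\BNr(T)$ that wraps nontrivially around $c$, generating nonzero winding classes in $\AKh(T(0)_a; \F)$ and thereby obstructing the application of Xie's theorem. Making this heuristic rigorous, and identifying precisely the contribution of each type of component of $\BNr(T)$ to the annular winding grading, is the crux of the argument.
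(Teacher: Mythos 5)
Your sketch has two genuine gaps, and the more serious one is not just ``left to verify'' but fails outright for the annular setup you chose. If \(c\) is a small meridian of one of the two arcs of the \(Q_0\)-filling, then the component of \(T(0)\) containing that arc satisfies \(\lk(\cdot,c)=\pm1\), hence has winding number \(\pm1\) in the solid torus \(S^3\smallsetminus\nu(c)\). Consequently \(T(0)_a\) is \emph{never} contained in a three-ball inside that solid torus, and \(\AKh(T(0)_a;\C)\) can never be concentrated in annular grading zero---otherwise \cite[Corollary~1.6]{Xie} would produce exactly such a ball. So the ``winding-grading concentration'' you identify as the crux is false for every \(T\), independently of any hypothesis on \(\Khr(T)\); and even if containment in a ball were available, extracting a sphere separating the two components of \(T(0)\) is not what Lemma~\ref{lem:sphere_in_solid_torus} gives (that lemma certifies horizontal splitness of the tangle from the \(\infty\)-closure, a strictly stronger conclusion than splitness of \(T(0)\), and one that is false here in general since the hypotheses allow special slope-\(0\) components, i.e.\ non-split \(T\)). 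Your preliminary step is also unsupported: the mapping cone \(\DD_1(T)=\Cone(H\cdot\id)\) relates the \emph{total} complexes, not individual components, and no componentwise dictionary between \(\Khr(T)\) and \(\BNr(T)\) of the kind you assert (``each \(\rKh_1(0)\) comes from an arc or a figure-eight, each special component from a special component of the same slope'') is established in the paper---indeed compact components of \(\BNr\) can be far more complicated than those of \(\Khr\).

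The paper's proof avoids annular homology and \(\BNr(T)\) entirely, and is where the length-one hypothesis actually does its work. One computes \(\Khr(T(0))\cong\HF(\a_0,\Khr(T))\) with \(\a_0=\BNr(Q_0)\); special components of slope \(0\) can be homotoped off \(\a_0\), so only the rational components contribute, and if each of these is \(\r_1(0)\) (up to grading shift) then the basepoint action of \(\F[x]/(x^2)\) on \(\Khr(T(0))\) is free. Splitness of \(T(0)\) then follows from the main result of \cite{LS}, i.e.\ the detection input is the Lipshitz--Sarkar module-structure theorem applied to the closed link \(T(0)\), not Xie's annular result applied to a tangle closure. If you want to pursue an annular route you would need both a geometrically meaningful axis (one that every component of \(T(0)\) links trivially) and a proof of the concentration statement from the multicurve data---precisely the content your proposal leaves open.
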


\begin{proof}
	If \(\Khr(T)\) only contains components of slope 0 then \(\Khr(T(0))\) is isomorphic to the Lagrangian Floer homology between \(\BN(\Lo)\) and the rational components of \(\Khr(T)\). If all rational components are (up to grading shift) equal to \(\r_1(0)\) this implies that the basepoint action on \(\Khr(T(0))\) is free. So by the main result of \cite{LS}, \(T(0)\) is a split link. 
\end{proof}

To prove Conjecture~\ref{conj:split_closure_detection}, it remains to show that if $\Khr(T;\F)$ only contains curves of slope 0 then all its rational components have length 1. 
Note that it is important that we use coefficients in \(\fieldTwoElements\), because this statement is false away from characteristic $2$. 
However, over $\fieldTwoElements$, we in fact expect this to be a more general property of the multicurve invariants \(\Khr\):

\begin{conjecture}
	For any Conway tangle \(T\), the length of any rational component of  \(\Khr(T;\fieldTwoElements)\) is equal to 1. 
\end{conjecture}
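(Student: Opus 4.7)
The plan is to translate the conjecture into an algebraic statement about the Bar-Natan complex and then exploit characteristic-two features, in a manner parallel to the proof of Theorem~\ref{thm:split_link_closure_detection}. Under the correspondence $\hateqq$ between multicurves and homotopy classes of chain complexes over $\BNAlgH$, a rational component $\rKh_n(\nicefrac{p}{q})$ of length $n$ is sent, after normalising the slope to $0$ via a mapping class group element, to a zigzag direct summand of $\DD_1(T)$ with $n$ generators alternating between $\DotB$ and $\DotC$ and connected by the $S$-arrows of $\BNAlgH$ (together with the $H\cdot\id$ mapping cone implicit in the passage from $\DD(T)$ to $\DD_1(T)$). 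The conjecture is therefore equivalent to ruling out any such zigzag summand of length $n \geq 2$ in $\DD_1(T;\fieldTwoElements)$.

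The main leverage is Shumakovitch's theorem \cite{Shumakovitch} that reduced Khovanov homology over $\fieldTwoElements$ is basepoint-independent, equivalently that the basepoint action on unreduced Khovanov homology is free. Concretely, I would pair $\Khr(T)$ with a rational arc $\BNr(Q_s)$ of slope $s$ at distance one from $\nicefrac{p}{q}$, so that the pairing computes the reduced Khovanov homology of a closure link $T(s)$, and then use the morphism-space description of Lagrangian Floer homology from \cite[Theorem~1.5]{KWZ}. An interior vertex of a hypothetical length $n \geq 2$ zigzag would produce an identity-labelled morphism $(\textcolor{red}{\DotBred} \xrightarrow{\id} \textcolor{blue}{\DotBblue})$ of exactly the shape used in the proof of Theorem~\ref{thm:split_link_closure_detection}. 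The basepoint action sends this morphism to its multiplication by $D_{\smallDotB}$, which is null-homotopic via the adjacent zigzag generator but is not itself in the image of the basepoint action; this produces torsion in the $\F[x]/(x^2)$-action on the morphism space.

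The hard part will be to promote this local torsion to a global torsion class surviving in $\Khr(T(s))$. In the proof of Theorem~\ref{thm:split_link_closure_detection} the closure was assumed to be a split link, so Shumakovitch's freeness gave an immediate contradiction; here the closure $T(s)$ need not be split, and torsion in the basepoint action is permitted in general, so one must isolate the offending zigzag's contribution from the contributions of the other components of $\Khr(T)$. One promising route is to exploit the basepoint-independence half of Shumakovitch's theorem: reduce at a basepoint on the component of $T(s)$ connected to the zigzag and compare with a reduction elsewhere, using the discrepancy between these two reduced theories to pin down the zigzag. A complementary route is to induct on the total length $\sum n_i$ of the rational components, using a formal chain-level cancellation on $\DD_1(T)$ to strip off length-one components and reduce to simpler tangles whose invariants are better understood. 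Throughout, the characteristic-two hypothesis is essential because Shumakovitch's freeness fails over fields of odd characteristic, which is consistent with (and presumably explains) the existence of length $\geq 2$ rational components in that setting.
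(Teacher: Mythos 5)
This statement is one of the open conjectures with which the paper closes: the paper offers no proof of it (it is stated precisely because the authors could not establish it), so there is nothing to compare your argument against except the adjacent results, and your proposal does not close the gap either. What you have written is a strategy sketch whose decisive step is explicitly deferred (``the hard part will be to promote this local torsion to a global torsion class''), and that step is not merely technical --- it is the entire content of the conjecture. The torsion mechanism you borrow from the proof of Theorem~\ref{thm:split_link_closure_detection} only yields a contradiction because there the closure was \emph{assumed} split, so Shumakovitch's theorem forces the relevant basepoint action on $\Khr(K\cup K')$ to be free. For a general closure $T(s)$ the basepoint action on reduced Khovanov homology over $\fieldTwoElements$ is typically not free --- indeed, by \cite{LS} freeness characterises split links --- so exhibiting a torsion class in $\Homology(\Mor(\textcolor{red}{\DD_0},\textcolor{blue}{\DD_1}))$ contradicts nothing. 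Your two suggested escape routes (comparing reductions at different basepoints, or an induction stripping off length-one components) are not developed to the point where one can see why a length~$\geq 2$ rational component is obstructed, and no candidate obstruction independent of splitness is identified.

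There is also an inaccuracy in your algebraic model that would need repair even to set the argument up. A slope-zero rational component is not an alternating $\DotB$/$\DotC$ zigzag joined by $S$-arrows: the paper's own identifications give $\rKh_1(0)=\eight_1(0)\hateqq\bigl[\DotB\xrightarrow{\;H\;}\DotB\bigr]$, with no $\DotC$ generators and no bare $S$-labels, and the complexes of the longer curves $\rKh_n(0)$ (see \cite{KWZthinness}) likewise do not have the shape you describe; the ``interior vertex'' producing your identity-labelled morphism therefore has to be re-derived from the actual pulled-tight complexes, as is done for the specific curves appearing in Figure~\ref{fig:interpicture}. Finally, note that ``normalising the slope to $0$ via a mapping class group element'' implicitly replaces $T$ by a twisted tangle; this is legitimate by the twisting naturality of the invariants, but it should be said, since the closure being paired with changes accordingly. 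As it stands, the proposal restates the conjecture in the algebraic language of the paper and recalls the tools already used for Theorem~\ref{thm:split_link_closure_detection}, but it does not contain a proof.
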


\newcommand*{\arxiv}[1]{\href{http://arxiv.org/abs/#1}{ArXiv:\ #1}}
\newcommand*{\arxivPreprint}[1]{\href{http://arxiv.org/abs/#1}{ArXiv preprint #1}}
\bibliographystyle{alpha}
\bibliography{main}

\end{document}